\definecolor{refkey}{gray}{.45}
\definecolor{labelkey}{gray}{.45}
\newsavebox\myboxA
\newsavebox\myboxB
\newlength\mylenA
\newcommand*\xoverline[2][0.75]{%
    \sbox{\myboxA}{$\m@th#2$}%
    \setbox\myboxB\null
    \ht\myboxB=\ht\myboxA%
    \dp\myboxB=\dp\myboxA%
    \wd\myboxB=#1\wd\myboxA
    \sbox\myboxB{$\m@th\overline{\copy\myboxB}$}
    \setlength\mylenA{\the\wd\myboxA}
    \addtolength\mylenA{-\the\wd\myboxB}%
    \ifdim\wd\myboxB<\wd\myboxA%
       \rlap{\hskip 0.5\mylenA\usebox\myboxB}{\usebox\myboxA}%
    \else
        \hskip -0.5\mylenA\rlap{\usebox\myboxA}{\hskip 0.5\mylenA\usebox\myboxB}%
    \fi}
\newcommand{\rr}{\mathbb R}
\newcommand{\rbo}{\mathbb R}
\newcommand{\rn}{\mathbb R^n}
\newcommand{\sn}{S^{n-1}}
\newcommand{\kno}{\mathcal K^n_o}
\newcommand{\kne}{\mathcal K^n_e}
\newcommand{\sno}{\mathcal S^n_o}
\newcommand{\bx}{\pmb{x}}
\newcommand{\bu}{\pmb{\nu}}
\newcommand{\balpha}{\pmb{\alpha}}
\newcommand{\hm}{\mathcal H^{n-1}}
\newcommand{\conv}{\operatorname{conv}}
\newcommand{\spane}{\operatorname{span}}
\newcommand{\Beta}{\operatorname{B}}
\newcommand{\mcal}{\mathcal M}
\newcommand\wtilde[1]{\overset{\lower.4ex\hbox{$\scriptstyle \sim$}}{#1}}
\newcommand\wst[1]{\overset{\lower.5ex\hbox{$\scriptscriptstyle \sim$}}{#1}}
\newcommand{\blb}{\raise.3ex\hbox{$\scriptstyle \pmb \lbrack$}}
\newcommand{\sblb}{\raise.1ex\hbox{$\scriptscriptstyle \pmb \lbrack$}}
\newcommand{\brb}{\raise.3ex\hbox{$\scriptstyle \pmb \rbrack$}}
\newcommand{\sbrb}{\raise.1ex\hbox{$\scriptscriptstyle \pmb \rbrack$}}
\newcommand{\bla}{\raise.2ex\hbox{$\scriptstyle\pmb \langle$}}
\newcommand{\sbla}{\raise.1ex\hbox{$\scriptscriptstyle\pmb \langle$}}
\newcommand{\bra}{\raise.2ex\hbox{$\scriptstyle\pmb \rangle$}}
\newcommand{\sbra}{\raise.1ex\hbox{$\scriptscriptstyle\pmb \rangle$}}
\newcommand{\blrb}{\raise.3ex\hbox{$\scriptstyle \pmb | $}}
\newcommand{\sblrb}{\raise.1ex\hbox{$\scriptscriptstyle \pmb | $}}
\newcommand{\wt}{\wtilde}
\newcommand{\wtp}{\wtilde{+} }
\newcommand{\psum}{{+_{\negthinspace\kern-2pt p}}\,}
\newcommand{\qsum}[1]{{+_{\negthinspace\kern-2pt #1}}\,}
\newcommand{\dpsum}{{\tilde+_{\negthinspace\kern-1pt p}}\,}
\newcommand{\dqsum}[1]{{\tilde+_{\negthinspace\kern-1pt #1}}\,}
\newcommand{\lsub}[1]{\hskip -1.5pt\lower.5ex\hbox{$_{#1}$}}
\begin{document}

\title[Geometric measures and Minkowski problems]{Geometric measures in the dual Brunn-Minkowski theory and their associated Minkowski problems}

\author[Y. Huang]{Yong Huang}
\address{College of Mathematics and Econometrics,
Hunan University, Changsha, 410082, China}
\email{
huangyong@hnu.edu.cn}


\author[E. Lutwak]{Erwin Lutwak}
\address{Department of Mathematics, New York University Polytechnic School of Engineering, 6 Metrotech Center, Brooklyn, NY 11201, USA}
\email{lutwak@nyu.edu}

\author[D. Yang]{Deane Yang}
\address{Department of Mathematics, New York University Polytechnic School of Engineering, 6 Metrotech Center, Brooklyn, NY 11201, USA}
\email{deane.yang@nyu.edu}

\author[G. Zhang]{Gaoyong Zhang}
\address{Department of Mathematics, New York University Polytechnic School of Engineering, 6 Metrotech Center, Brooklyn, NY 11201, USA}
\email{gaoyong.zhang@nyu.edu}

\subjclass{52A38, 35J20}
\keywords{dual curvature measure, cone volume measure, surface area measure, integral curvature,
Minkowski problem, $L_p$-Minkowski Problem, logarithmic Minkowski problem, Alexandrov problem, dual Brunn-Minkowski theory}

\thanks{Research of the first author supported, in part by NSFC No.11371360; research of the other authors supported, in part, by
NSF Grant DMS-1312181.}


\maketitle

\numberwithin{equation}{section}

\newtheorem{theo}{Theorem}[section]
\newtheorem{coro}[theo]{Corollary}
\newtheorem{lemm}[theo]{Lemma}
\newtheorem{prop}[theo]{Proposition}
\newtheorem{conj}[theo]{Conjecture}
\newtheorem{exam}[theo]{Example}
\newtheorem{prob}[theo]{Problem}

\theoremstyle{definition}
\newtheorem{defi}[theo]{Definition}

\section{Introduction}

The Brunn-Minkowski theory (or the theory of mixed volumes) of
convex bodies, developed by Minkowski, Aleksandrov, Fenchel, et al.,
centers around the study of geometric functionals of convex bodies
as well as the differentials of these functionals.
The theory depends heavily on analytic tools such as
the cosine transform on the unit sphere
(a variant of the Fourier transform) and Monge-Amp\`ere type equations.
The fundamental geometric functionals in the Brunn-Minkowski theory
are the quermassintegrals (which include volume and surface area as special cases). The differentials
of volume, surface area and the other quermassintegrals are geometric measures called the area measures and (Federer's) curvature measures. These geometric measures are fundamental concepts in the Brunn-Minkowski theory.

A Minkowski problem is a characterization
problem for a geometric measure generated by convex bodies: It asks for necessary and sufficient conditions
in order that a given measure arises as the measure generated by a convex body. The solution of a Minkowski
problem, in general, amounts to solving a degenerated fully nonlinear partial differential equation.
The study of Minkowski problems has a
long history and strong influence on both the Brunn-Minkowski theory
and fully nonlinear partial differential equations, see \cite{LWY11} and \cite{S14}. Among the important
Minkowski problems in the classical Brunn-Minkowski theory are the classical Minkowski problem itself, the Aleksandrov problem, the Christoffel problem, and the Minkowski-Christoffel problem.

There are two extensions of the Brunn-Minkowski theory: the dual Brunn-Minkowski theory, which emerged in the mid-1970s, and the $L_p$ Brunn-Minkowski theory actively investigated since the 1990s
but dating back to the 1950s.  The important $L_p$ surface area measure and its associated Minkowski problem
in the $L_p$ Brunn-Minkowski theory were introduced in \cite{L93jdg}.
The logarithmic Minkowski problem and the centro-affine Minkowski problem are
unsolved singular cases, see \cite{BLYZ13jams} and \cite{CW06adv}.
The book \cite{S14} of Schneider presents a comprehensive account of the classical Brunn-Minkowski
theory and its recent developments, see Chapters 8 and 9 for Minkowski problems.

For the dual Brunn-Minkowski theory, the situation is quite different.
While, over the years, the ``duals" of many concepts and problems of the classical Brunn-Minkowski theory have been discovered and studied,
the duals of Federer's curvature measures and their associated
Minkowski problems within the dual Brunn-Minkowski theory have remained elusive. Behind this lay our inability to calculate the differentials of the dual quermassintegrals.
Since the revolutionary work of Aleksandrov in 1930s, the  nonlinear partial differential equations that arise within the classical Brunn-Minkowski theory and within the $L_p$ Brunn-Minkowski theory,
have done much to advance both theories. However, the intrinsic PDEs of the dual Brunn-Minkowski theory have had to wait a full 40 years after the birth of the dual theory to emerge. It was the elusive nature of the duals of Federer's curvature measures that kept these PDEs well hidden. As will be seen, the duals of Federer's curvature measures contain a number of surprises. Perhaps the biggest is that they connect known measures that were never imagined to be related. All this will be unveiled in the current work.

In the following, we first recall
the important geometric measures and their associated Minkowski problems in the classical Brunn-Minkowski theory and the $L_p$
Brunn-Minkowski theory.
Then we explain how the missing geometric measures in the dual Brunn-Minkowski theory can be naturally discovered and how their associated Minkowski problems will be investigated.

As will be shown, the notion of {\it dual curvature measures} arises naturally from the fundamental geometric functionals (the dual quermassintegrals) in the dual
Brunn-Minkowski theory.  Their associated Minkowski problem, will be called the  {\it dual Minkowski problem}. Amazingly,
both the logarithmic Minkowski problem as well as the Aleksandrov problem turn out to be special
cases of the new dual Minkowski problem. Existence conditions for the solution of the dual Minkowski problem in the symmetric case will be given.

\subsection{Geometric measures and their associated Minkowski problems in the Brunn-Minkowski theory}
The fundamental geometric functional for convex bodies in Euclidean $n$--space, $\rn$, is volume (Lebesgue
measure), denoted by $V$. The support function
$h_K:\sn \to {\mathbb R}$ of a compact convex $K \subset \rn$, is defined, for $v\in\sn$, the unit sphere, by
$h_K(v)=\max\{v\cdot x: x \in K\}$, where  $v\cdot x$ is the inner product of $v$ and $x$ in $\rn$.
For a continuous $f:\sn \to {\mathbb R}$, some small $\delta=\delta_K>0$, and $t \in(-\delta, \delta)$, define the $t$-perturbation of $K$ by $f$ by
\[
\blrb  K, f \blrb_t =\{x\in \rn : x\cdot v \le h_K(v)+tf(v),\ \text{for all}\  \ v\in\sn\}.
\]
This convex body is called the {\it Wulff shape} of $(K,f)$, with parameter $t$.
\smallskip

\noindent
{\bf Surface area measure, area measures, and curvature measures.}
Aleksandrov established the following
variational formula,
\begin{equation}\label{Alex-var-f}
\frac{d}{dt} V(\blrb K, f \blrb_t) \Big |_{t=0} = \int_{\sn} f(v)\, dS(K,v),
\end{equation}
where $S(K,\,\cdot\,)$ is the Borel measure on $\sn$ known as the classical {\it surface area measure of $K$}.
This formula suggests that the surface area measure
can be viewed as the differential of the volume functional. The total
measure of the surface area measure $S(K)=|S(K,\,\cdot\,)|$ is the ordinary surface area of $K$.
Aleksandrov's proof of \eqref{Alex-var-f} makes critical use of the Minkowski mixed-volume inequality
--- an inequality that is an extension of the classical isoperimetric inequality,
see Schneider \cite{S14}, Lemma 7.5.3, and \cite{HLYZ10}, Lemma 1. In this paper, we shall present the first proof of \eqref{Alex-var-f} that makes no use of mixed-volume inequalities.

The surface area measure of a convex body can be defined directly, for each Borel set $\eta \subset \sn$, by
\begin{equation}\label{s-a-m}
S(K, \eta) =\mathcal H^{n-1}(\nu_K^{-1}(\eta)),
\end{equation}
where $\mathcal H^{n-1}$ is $(n-1)$-dimensional
 Hausdorff measure. Here the Gauss map $\nu_K : \partial'\negthinspace K \to \sn$ is a function defined on all points of $\partial K$ that have a unique outer unit normal and is hence defined $\mathcal H^{n-1}$-a.e.\ on $\partial K$. (See \eqref{sim} for a precise definition.)
If one views the reciprocal Gauss curvature of a smooth convex body as a function of the outer unit normals of the body, then
surface area measure is the extension to arbitrary convex bodies (that are not necessarily smooth) of the reciprocal
 Gauss curvature. In fact,
 if $\partial K$ is of class $C^2$
 and has everywhere positive curvature, then the surface area measure has a positive density,
 \begin{equation}\label{det-hess}
 dS(K,v)/dv = \det(h_{ij}(v) + h_K(v)\delta_{ij}),
 \end{equation}
 where $(h_{ij})$ is the Hessian matrix of $h_K$ with respect to an orthonormal frame on $\sn$, 
 $\delta_{ij}$
 is the Kronecker delta, the determinant is precisely the reciprocal
 Gauss curvature of $\partial K$ at the point with outer unit normal $v$, and where the Radon-Nykodim derivative is with respect to spherical Lebesgue measure.

The {\it quermassintegrals},
are the principal geometric functionals in the Brunn-Minkowski theory.
These are the elementary mixed volumes which include volume, surface area,
and mean width. In differential geometry, the quermassintegrals
are the integrals of intermediate mean curvatures of closed smooth convex hypersurfaces.
In integral geometry, the quermassintegrals are the means of the projection areas
of convex bodies:
\begin{equation}\label{quermass}
W_{n-i}(K) = \frac{\omega_n}{\omega_i} \int_{G(n,i)} \text{vol}_i(K|\xi)\, d\xi,\quad i=1, \dots, n,
\end{equation}
where $\xi\in G(n,i)$, the Grassmann manifold  of $i$-dimensional
 subspaces in $\rn$, while $K|\xi$ is the image of the orthogonal projection of $K$ onto $\xi$, where $\text{vol}_i$ is just $\mathcal H^{i}$, or Lebesgue measure in $\xi$, and $\omega_i$ is the $i$-dimensional
 volume of the $i$-dimensional unit ball. The integration here is with respect to the rotation-invariant probability measure on $G(n,i)$.

Since $V=W_0$ and with $S(K,\,\cdot\,)=S_{n-1}(K,\,\cdot\,)$, it would be tempting to conjecture that
Aleksandrov's variational formula \eqref{Alex-var-f} can be extended to
quermassintegrals; i.e., for each continuous $f:\sn \to {\mathbb R}$,
\begin{equation}\label{i1.1}
\frac{d}{dt} W_{n-j-1}(\blrb K, f \blrb_t ) \Big |_{t=0} = \int_{\sn} f(v)\, dS_j(K,v), \qquad j=0, \dots, n-1.
\end{equation}
Unfortunately, this is only known for the very special case where $f$ is a support function and where the derivative is a right derivative.
The measures defined by \eqref{i1.1} (for the case where $f$ is a support function and the derivative is a right derivative) are called the {\it area measures} and were introduced by Fenchel \& Jessen and
Aleksandrov, see Schneider \cite{S14}, p. 214.
The proof of the variational formula \eqref{i1.1}, for the case where $f$ is a support function and the derivative is a right derivative, depends on
the Steiner formula for mixed volumes. The lack of a general variational formula for the quermassintegrals is one of the main obstacles in tackling the PDEs associated with the area measures.

In addition to the area measures of Aleksandrov and Fenchel \& Jessen, there exists another set of measures $\mathcal C_0(K,\cdot), \dots, \mathcal C_{n-1}(K,\cdot)$
called {\it curvature measures}, which were introduced by Federer \cite{F59} for sets of positive reach,
and are also closely related to the quermassintegrals. A direct treatment of curvature measures for convex bodies
was given by Schneider \cite{S78, S79}, see also \cite{S14}, p.~214.
If $K$ is a convex body in $\rn$ that contains
the origin in its interior, then each ray emanating from the origin intersects a unique point on
$\partial K$ and a unique point on the unit sphere $\sn$. This fact induces a bi-Lipschitz map $r_K : \sn \to  \partial K$.
The pullback of $r_K$ sends the curvature measure $\mathcal C_j(K,\cdot)$ on $\partial K$
to a measure $C_j(K, \cdot)$ on the unit sphere $\sn$, which is called the {\it $j$-th
curvature measure} of $K$.  The measure $C_0(K,\cdot)$ was first defined by Aleksandrov,
who called it the {\it integral curvature of $K$}, see \cite{Al4}. The total measures of
both area measures and curvature measures give the quermassintegrals:
\[
S_j(K,\sn) = C_j(K, \sn) = n W_{n-j}(K),
\]
for $j=0, 1, \dots, n-1$, see Schneider \cite{S14}, p. 213.
\smallskip

\noindent
{\bf Minkowski problems in the Brunn-Minkowski theory.}
One of the main problems in the Brunn-Minkowski theory is characterizing the
area and curvature measures. The well-known classical Minkowski problem is: {\it given a finite
Borel measure $\mu$ on $\sn$, what are the necessary and sufficient conditions so that
$\mu$ is the surface area measure $S(K,\,\cdot\,)$ of a convex body $K$ in $\rn$?}
The Minkowski problem was first studied by Minkowski \cite{Min1, Min2}, who
demonstrated both existence and uniqueness of solutions for the problem when the given measure is either discrete or has a continuous density. Aleksandrov \cite{Al2, Al3} and
Fenchel \& Jessen \cite{FJ} independently solved the problem in 1938 for arbitrary measures.
Their methods are variational and \eqref{Alex-var-f} is crucial
for transforming the Minkowski problem into an optimization problem.
Analytically,  the Minkowski problem is equivalent to solving a degenerated
Monge-Amp\`ere equation. Establishing the regularity of the solution
to the Minkowski problem is difficult and has led to a long series of influential works,
see for example, Lewy \cite{Le38},
Nirenberg \cite{N53},
Cheng and Yau \cite{ChengYau}, Pogorolov \cite{Pogobook}, Caffarelli \cite{Caf90Ann}, etc.

After solving the Minkowski problem, Aleksandrov went on to characterize his integral
curvature $C_0(K,\cdot)$, which is called the {\it Aleksandrov problem}. He was
able to solve it completely by using his {\it mapping lemma}, see \cite{Al4}.
Further work on the Aleksandrov problem from the PDE and the mass transport viewpoints
are due to Guan-Li \cite{GL97com} and Oliker \cite{Ol}.

Finding necessary and sufficient conditions so that a given measure is
the area measure $S_1(K,\cdot)$ of a convex body $K$ is the {\it Christoffel problem}.
Firey \cite{F68} and Berg \cite{Be69} solved the problem independently. See Pogorelov \cite{Po53}
for a partial result in the smooth case, and see Schneider \cite{S77} for a more explicit
solution in the polytope case,
Grinberg-Zhang \cite{GZ99} for an abbreviated  approach to Firey's and Berg's solution,
Goodey-Yaskin-Yaskina \cite{GYY11} for a Fourier transform approach,
and Schneider \cite{S14}, \S 8.3.2.
In general, characterizing the area measure $S_j(K,\cdot)$ is called
the {\it Minkowski-Christoffel problem}: {\it given an integer $1\le j\le n-1$ and a finite
Borel measure $\mu$ on $\sn$, what are the necessary and sufficient conditions so that
$\mu$ is the area measure $S_j(K,\cdot)$ of a convex body $K$ in $\rn$.}
The case where $j=1$ is the Christoffel problem, and the case where $j=n-1$ is the classical
Minkowski problem. For $1<j<n-1$,
it has been a long-standing open problem.
Important progress was made recently by Guan-Ma \cite{GM03inv}.
See Guan-Guan \cite{GG02Ann} for a variant of this problem.

Extending Aleksandrov's work on the integral curvature and characterizing
other curvature measures is also a major unsolved problem:
{\it given an integer $1\le j \le n-1$ and a finite
Borel measure $\mu$ on $\sn$, what are the necessary and sufficient conditions so that
$\mu$ is the curvature measure $C_j(K,\cdot)$ of a convex body $K$ in $\rn$.}
This is the {\it Minkowski problem for curvature measures} which can also be
called the {\it general Aleksandrov problem}. See Guan-Lin-Ma \cite{GLM09} and the recent work of Guan-Li-Li \cite{GLL12duke} on this problem.
\smallskip

\noindent
{\bf Cone volume measure and logarithmic Minkowski problem.}
In addition to the surface area measure of a convex body, another fundamental measure
associated with a convex body $K$ in $\rn$ that contains the origin in its interior
is the {\it cone volume measure} $V_K$, defined for the Borel set $\eta\subset\sn$,
by
\begin{equation}\label{c-v-m}
V_K(\eta) = \frac1n \int_{x\in \nu_K^{-1}(\eta)} x\cdot \nu_K(x)\, d\mathcal H^{n-1}(x)
= V(K\cap c(\eta)),
\end{equation}
which is the volume of the cone $K\cap c(\eta)$,
where $c(\eta)$ is the cone of rays from the origin so that
$\partial K \cap c(\eta) = \nu_K^{-1}(\eta)$ for the Borel set $\eta \subset \sn$.

A very important property of the cone volume measure is its $SL(n)$ invariance, or simply called
affine invariance.
The area and curvature measures are all $SO(n)$ invariant. The $SL(n)$ invariance
of the cone volume measure makes the measure a useful notion in the geometry of normed spaces,
see \cite{BGMN8, LYZ10jdg, LYZ10adv, N07, NR03, PW12}.
The Minkowski problem for the cone volume
measure is called the {\it logarithmic Minkowski problem}; It asks for necessary and sufficient conditions
for a given measure on the unit sphere to be the cone volume measure of a convex body. The existence part of the
logarithmic Minkowski problem has been solved recently for the case of even measures within the class of origin-symmetric convex bodies, see \cite{BLYZ13jams}.
A sufficient condition for discrete (but not-necessarily even) measures was given by Zhu \cite{Zu}.
It was shown in \cite{BLYZ12adv} that the solution to both the existence and uniqueness questions
for the logarithmic Minkowski problem for even measures would lead to a stronger Brunn-Minkowski inequality.
It was shown in \cite{BLYZ13jdg} that the
necessary and sufficient
conditions for the existence of a solution to the logarithmic Minkowski problem for even measures are identical to
the necessary and sufficient
conditions for the existence of an affine transformation that maps the given measure into one that is
isotropic.
The problem has strong connections with curvature flows, see Andrews \cite{And99inv, And03jams}.
\smallskip

\noindent
{\bf $L_p$ surface area measure and $L_p$ Minkowski problem.}
The $L_p$ Brunn-Minkowski theory is an extension of the classical Brunn-Minkowski theory,
see \cite{HS09jdg, Lud03, LR10, L93jdg, LYZ00jdg, LYZ00duke, LYZ05jdg, LZ97jdg, S14}.
The $L_p$ surface area measure, introduced in \cite{L93jdg}, is a fundamental notion in the $L_p$-theory.
For  fixed $p\in \mathbb R$, and a convex body $K$ in $\rn$
that contains the origin in its interior, the {\it $L_p$ surface area measure} $S^{(p)}(K,\,\cdot\,)$ of $K$
is a Borel measure on $\sn$, defined for a Borel set $\eta\subset \sn$, by
\begin{equation}\label{p-s-a-m}
S^{(p)}(K,\eta) = \int_{x\in \nu_K^{-1}(\eta)} (x\cdot \nu_K(x))^{1-p}\, d\mathcal H^{n-1}(x).
\end{equation}
Both the surface area measure and the cone volume measure are the special cases $p=1$ and $p=0$,
respectively, of $L_p$ surface area measure.
The $L_p$ Minkowski problem, posed by Lutwak (see, e.g., \cite{L93jdg}),
asks for necessary and sufficient
conditions so that a given measure on the unit sphere be the $L_p$ surface area measure of a convex body;
see, e.g., \cite{Chen06adv, CW06adv, 
L93jdg, LO95jdg}.
The case of $p=1$ is the classical Minkowski problem, the case of $p=0$ is the logarithmic Minkowski
problem (see \cite{BLYZ13jams}), and the case of $p=-n$ is the centro-affine Minkowski problem
(see Chou-Wang \cite{CW06adv}, Lu-Wang \cite{LW13jde}, and Zhu \cite{Zu2}). The solution to the $L_p$ Minkowski
problem has proven to be a critical tool in
establishing sharp affine Sobolev inequalities via affine isoperimetric inequalities, see
\cite{CLYZ, HS09jfa, LYZ00jdg, LYZ02jdg, 
Z99jdg}.

\medskip

\subsection{Geometric measures and their associated Minkowski problems in the dual Brunn-Minkowski theory}
A theory analogous to the theory of mixed volumes was introduced in 1970s in \cite{L75paci}.
It demonstrates a remarkable duality in convex geometry, and thus is called the {\it theory
of dual mixed volumes}, or the {\it dual Brunn-Minkowski theory}. The duality, as a guiding principle,
is conceptual in a heuristic sense and has motivated much investigation. A good explanation of
this conceptual duality is given in Schneider \cite{S14}, p. 507.
The aspect of the duality  between projections and cross-sections of convex bodies
is thoroughly discussed in Gardner \cite{G06book}.
The duality will be called the {\it conceptual duality} in convex geometry.

The main geometric functionals in the dual Brunn-Minkowski theory are the {\it dual quermassintegrals}.
The following integral geometric definition of the dual quermassintegrals, via the volume of the central sections of the body, shows their
amazing dual nature to the quermassintegrals defined in \eqref{quermass},
\begin{equation}\label{d-quermass}
\wt W_{n-i}(K) = \frac{\omega_n}{\omega_i}
 \int_{G(n,i)} \text{vol}_i(K\cap\xi)\, d\xi,\qquad i=1, \dots, n.
\end{equation}
The volume functional $V$ is both the quermassintegral $W_0$ and the
dual quermassintegral $\wt W_0$. Earlier investigations in the dual Brunn-Minkowski theory centered around finding
isoperimetric inequalities involving dual mixed volumes that mirrored those for mixed volumes, see
Schneider \cite{S14}, \S9.3 -- \S9.4, and Gardner \cite{G06book}.
It was shown in \cite{Z99tams} that the fundamental kinematic formula for quermassintegrals
in integral geometry has a dual version for dual quermassintegrals.

Exciting developments in the dual Brunn-Minkowski theory began in the late 1980s
because of the duality between projection bodies and intersection bodies exhibited in \cite{L88adv}.
The study of central sections of convex bodies
by way of
intersection bodies and the  Busemann-Petty
problem has attracted extensive attention in convex geometry,
see, for example, \cite{Bo91, G94annals, GKS99, L88adv, Z99annals}, and see \cite{G06book, K05}
for additional references.
Some of these works bring techniques from harmonic analysis, in particular, Radon transforms and the Fourier transform,
into the dual Brunn-Minkowski theory, see \cite{G06book, K05}. This is similar to the applications
of cosine transform to the study of projection bodies and the Shephard problem in the Brunn-Minkowski theory (see \cite{S14}, \S10.11 and
\cite{G06book}, \S4.2). However, the Busemann-Petty problem is far more interesting and is a problem whose isomorphic version
is still a major open problem in asymptotic convex geometric analysis.

There were important areas where progress in the dual theory lagged that of the classical theory.
Extending Aleksandrov's variational formula \eqref{Alex-var-f}  to the dual
quermassintegrals is one of the main such challenges. It is critically needed in order to discover the duals of the
geometric measures in the classical theory. One purpose of this work is to solve this problem
and thus to add key elements to the conceptual duality
of the Brunn-Minkowski theory and the dual Brunn-Minkowski theory. The main concepts to be introduced are {\it dual curvature measures}.
\smallskip

\noindent
{\bf Dual curvature measures.}
For each convex body $K$ in $\rn$ that contains the origin in its interior,
we construct explicitly a set of geometric measures $\wt C_0(K,\cdot),\ldots, \wt C_n(K,\cdot)$,
on $\sn$ associated with the dual quermassintegrals, with
\[
\wt C_j(K,\sn) = \wt W_{n-j}(K),
\]
for $ j=0,\dots, n$. These geometric measures can be viewed as the differentials of the dual quermassintegrals.

Our construction will show how these geometric measures, via conceptual duality, are the duals
of the curvature measures, and thus warrant being called the  {\it dual curvature measures} of $K$.
While the curvature measures of a convex body depend closely on
the body's boundary, its dual curvature measures depend more on the body's interior, but yet have
deep connections with classical concepts.
In the case $j=n$, the dual curvature measure $\wt C_n(K, \cdot)$ turns out to be the cone volume measure of $K$.
In the case $j=0$, the dual curvature measure $\wt C_0(K, \cdot)$ is Aleksandrov's integral curvature
of the polar body of $K$ divided by $n$.
When $K$ is a polytope, $\wt C_j(K,\cdot)$ is discrete and is concentrated on
the outer unit normals of the facets of $K$ with weights depending on the cones
that are the convex hull of the facet and the origin.
Dual area measures are also defined.
The new geometric measures we shall develop demonstrate yet again the amazing conceptual duality
between the dual Brunn-Minkowski theory and the Brunn-Minkowski theory.

We establish dual generalizations of Aleksandrov's variational formula \eqref{Alex-var-f}.
Suppose $K$ is a convex body in $\rn$ that contains the origin in its interior,
$f:\sn \to {\mathbb R}$ is a continuous function. For a sufficiently small $\delta>0$, define a family of {\it logarithmic Wulff shapes},
\[
\blb K,f \brb_t=\{x\in \rn : x\cdot v \le h_t(v),\ \text{for all } v\in \sn \},
\]
for each $t\in(-\delta,\delta)$, where $h_t(v)$, for $v\in\sn$, is given by
\[
\log h_t(v) = \log h_K(v) + t f(v) + o(t,v),
\]
where $\lim_{t\to 0} o(t,v)/t = 0$, uniformly in $v$.
The main formula to be presented is:
\smallskip

\noindent
{\bf Variational formula for dual quermassintegrals.} For $1 \le j \le n$, and each convex body $K$ that contains the origin in the interior, there exists a Borel measure $\wt C_j(K,\,\cdot\,)$ on $\sn$ such that
\begin{equation}\label{d-var-f}
\frac{d}{dt} \wt W_{n-j}(  \blb K,f \brb_t    )\Big|_{t=0} = j \int_{\sn} f(v)\, d\wt C_j(K,v),
\end{equation}
for each continuous $f:\sn \to {\mathbb R}$.

Obviously \eqref{d-var-f} demonstrates that the dual curvature measures are differentials of the dual
quermassintegrals. Clearly \eqref{d-var-f} is the dual of the conjectured variational formula \eqref{i1.1}.
Aleksandrov's variational formula \eqref{Alex-var-f} is the special case of $j=n$ of \eqref{d-var-f}. Thus,
our formula is a direct extension of Aleksandrov's variational formula for volume
to dual quermassintegrals.
Our approach and method of proof are very different from both Aleksandrov's proof of \eqref{Alex-var-f}
and from the proof of \eqref{i1.1} for the case where the function involved is a support function.

\smallskip
The main problem to be solved is the following characterization problem for the dual
curvature measures.
\smallskip

\noindent
{\bf Dual Minkowski problem for dual curvature measures.}
{\it Suppose $k$ is an integer such that $1\le k\le n$. If $\mu$ is a finite
Borel measure on $\sn$, find necessary and sufficient conditions on $\mu$ so that
it is the $k$-th dual curvature measure $\wt C_k(K,\cdot)$ of a convex body $K$ in $\rn$.}
\smallskip

This will be called the {\it dual Minkowski problem}.
For $k=n$, the dual Minkowski problem is just the logarithmic Minkowski problem. As will be shown, when the
measure $\mu$ has a density function $g:\sn \to {\mathbb R}$, the partial differential
equation that is the dual Minkowski problem is a Monge-Amp\`ere type equation on $\sn$:
\begin{equation}\label{0-1-equ}
\frac1n h|\nabla h|^{k-n} \det(h_{ij}+ h\delta_{ij}) = g.
\end{equation}
where $(h_{ij})$ is the Hessian matrix of the (unknown) function $h$ with respect to an orthonormal frame on $\sn$, and $\delta_{ij}$ is the Kronecker delta.

If $\frac1n h|\nabla h|^{k-n}$ were omitted in \eqref{0-1-equ}, then \eqref{0-1-equ} would become
the partial differential equation of the classical Minkowski problem.
If only the factor $|\nabla h|^{k-n}$ were
omitted, then equation \eqref{0-1-equ} would become the partial differential equation
associated with the logarithmic Minkowski problem.
The gradient component in \eqref{0-1-equ} significantly increases the difficulty of the problem when compared to the classical Minkowski
problem or
the logarithmic Minkowski problem.

In this paper, we treat the important symmetric case when the measure $\mu$ is even and the solution is within the class of origin-symmetric bodies. As will be shown, the existence of
solutions depends on how much of the measure's mass can be concentrated on great subspheres.

Let $\mu$ be a finite Borel measure on $\sn$,
and $1\le k\le n$.
We will say that the measure $\mu$ satisfies the {\it $k$-subspace mass inequality}, if
\[
\frac{\mu(\sn \cap \xi_{i})}{\mu(\sn)} < 1- \frac{k-1}{k} \frac{n-i}{n-1} ,
\]
for each $\xi_i\in G(n,i)$ and for each $i=1, \dots, n-1$.

\smallskip
The main theorem of the paper is the following
\smallskip

\noindent
{\bf Existence for the dual Minkowski problem.}
{\it Let $\mu$ be a finite even Borel measure on $\sn$, and $1\le k\le n$.
If the measure $\mu$ satisfies the $k$-subspace mass inequality,
then there exists an origin-symmetric convex body $K$ in $\rn$ such that
$\wt C_k(K,\cdot) = \mu.$}
\smallskip

The case of $k=n$ was proved in \cite{BLYZ13jams}. New ideas and more
delicate estimates are needed to prove the intermediate cases. We remark
that existence for the dual Minkowski problem is far easier to prove
for the special case where the given measure $\mu$ has a positive continuous density,
(where the subspace mass inequality is trivially satisfied). The
degenerated general case for measures is substantially more delicate and requires
far more powerful techniques to solve. The sufficient 1-subspace mass inequality is obviously
necessary for the case of $k=1$. The sufficient $n$-subspace
mass inequality is also necessary for the case $k=n$, except that certain
equality conditions must also be satisfied as well (see [9] for details). It would be of considerable interest to learn if the established sufficient conditions for other cases turn out to be necessary as well.

\section{Preliminaries}

\subsection{Basic concepts regarding convex bodies}
Schneider's book \cite{S14} is our standard reference for the basics regarding convex bodies. The books \cite{G06book, Gruberbook} are also good references.

Let $\rn$ denote $n$-dimensional Euclidean space. For $x\in\rn$, let $|x|=\sqrt{x\cdot x}$ be the Euclidean norm of $x$. For $x\in\rn \setminus \{0\}$, define $\xoverline{x}  \in \sn$ by
$\xoverline{x}  = x/|x|$. For a subset $E$ in $\rn \setminus \{0\}$, let $\bar E = \{\bar x : x\in E\}$.
The origin-centered unit ball $\{x\in\rn : |x|\le 1\}$ is always denoted by $B$, and its boundary by $\sn$.
Write $\omega_n$ for the volume of $B$ and recall that its surface area is  $n\omega_n$.

For the set of continuous functions defined on the unit sphere $\sn$ write $C(\sn)$, and for
$f\in C(\sn)$ write $\| f \|=\max_{v\in\sn} |f(v)|$. We shall view $C(\sn)$ as endowed with the topology induced by this {\it max-norm}. We write $C^+(\sn)$ for the set of strictly positive functions in $C(\sn)$, and $C_e^+(\sn)$ for the set of functions in $C^+(\sn)$ that are even.

If $K\subset\rn$ is compact and convex, the
support function
$h_K:\rn \to \rr$ of $K$ is defined by
$
h_K(x) = \max\{x\cdot y : y\in K\},
$
for $x\in\rn$. The
support function is convex and homogeneous of degree $1$. A compact convex subset of $\rn$ is uniquely determined by its support function.

Denote by $\mathcal K^n$ the space of compact convex sets in $\rn$ endowed with the {\it Hausdorff metric}; i.e. the distance between $K,L\in\mathcal K^n$
is
$\|h_K - h_L\|$.
By a {\it convex body} in $\rn$ we will always mean a compact convex set with nonempty interior.
Denote by $\kno$ the class of convex bodies  in $\rn$ that contain the origin in their
interiors, and denote by $\kne$ the class of origin-symmetric convex bodies in $\rn$.

Let $K\subset\rn$ be compact and star-shaped with respect to the origin.
The radial function $\rho_K:\rn\setminus \{0\} \to \rr$ is defined by
\[
\rho_K(x) = \max\{\lambda : \lambda x \in K\},
\]
for $x \neq 0$.
A compact star-shaped (about the origin) set is uniquely determined by its radial function on $\sn$.
Denote by $\mathcal S^n$ the set of compact star-shaped sets.
A star body is a compact star-shaped set with respect to the origin whose radial function
is continuous and positive.  If $K$ is a star body, then obviously
\[
\partial K =\{\rho_K(u)u : u\in \sn\}.
\]
Denote by $\sno$ the space of star bodies in $\rn$ endowed with the {\it radial metric}; i.e., the distance between $K,L\in\sno$, is
$
\|\rho_K - \rho_L\|.
$
Note that $\kno\subset\sno$ and that
on the space $\kno$ the Hausdorff metric and radial metric are equivalent, and thus
$\kno$ is a subspace of $\sno$.

If $K\in \kno$, then it is easily seen that
the radial function and the support function of $K$ are related by,
\begin{align}
h_K(v)        &= \max\nolimits_{u\in\sn}   (u\cdot v) \,\rho_K(u) , \label{2.1-1} \\
\shortintertext{for $v\in\sn$, and}
1/\rho_K(u) &=  \max\nolimits_{v\in\sn}  (u\cdot v)/h_K(v),    \label{2.1-2}
\shortintertext{for $u\in\sn$.} \nonumber
\end{align}

For a convex body $K \in \kno$, the
{\it polar body} $K^*$ of $K$ is a convex body in $\rn$ defined by
\[
K^* = \{ x\in \rn : x\cdot y \le 1, \  \text{for all }\  y\in K \}.
\]
From the definition of the polar body, we see that on $\rn\setminus\{0\}$,
\begin{equation}\label{polar-identity}
 \rho_K = 1/h_{K^*}\quad\text{and}\quad h_K= 1/\rho_{K^*}.
\end{equation}

For $K,L\subset\rn$ that are compact and convex, and real $a,b \ge 0$,
the {\it Minkowski combination}, $aK + bL\subset\rn$, is the compact, convex set defined by
\[
aK + bL = \{ax + by :\text{$x\in K$ and $y \in L$}\},
\]
and its support function is given by
\begin{equation}\label{minkowski-combination}
h_{aK+bL} = ah_K + bh_L.
\end{equation}

For real $t>0$, and a convex body $K$, let
$K_t=K+tB$, denote the {\it parallel body} of $K$. The volume of the parallel body $K_t$
is a polynomial in $t$, called the {\it Steiner polynomial},
\[
V(K_t) = \sum_{i=0}^n \binom ni  W_{n-i}(K) t^{n-i}.
\]
The coefficient $W_{n-i}(K)$ is called the $(n-i)$-th {\it quermassintegral} of $K$
which is precisely the geometric invariant defined in \eqref{quermass}.

\smallskip

For $K,L\subset\rn$ that are compact and star-shaped (with respect to the origin), and real $a,b \ge 0$,
the {\it radial combination}, $aK {\wtp} bL\subset\rn$, is the compact star-shaped set defined by
\[
aK{\wtp} bL =\{ax+by : \text{$x\in K$ and $y \in L$, while $x\cdot y =|x| |y|$}\}.
\]
Note that the condition $x\cdot y =|x| |y|$ means that either $y=\alpha x$ or $x=\alpha y$ for some $\alpha \ge0$.
The radial function of the radial combination of two star-shaped sets is the combination of their radial functions; i.e.,
\[
\rho_{aK\wst +bL} = a\rho_K + b\rho_L.
\]

For real $t>0$, and star body $K$, let
$\wt K_t = K{\wtp} tB$, denote the {\it dual parallel body} of $K$. The volume of
the dual parallel body $\wt K_t$
is a polynomial in $t$, called the {\it dual Steiner polynomial},
\[
V(\wt K_t) = \sum_{i=0}^n \binom ni  \wt W_{n-i}(K) t^{n-i}.
\]
The coefficient $\wt W_{n-i}(K)$ is the $(n-i)$-th {\it dual quermassintegral} of $K$
which is precisely the geometric invariant defined in \eqref{d-quermass}. For the $(n-i)$-th {\it dual quermassintegral} of $K$
we have the easily established integral representation
\begin{equation}\label{d-quer-i}
\wt W_{n-i}(K) = \frac1n \int_{\sn} \rho_K^i (u)\, du,
\end{equation}
where such integrals should always be interpreted as being with respect to spherical Lebesgue measure.

In view of the integral representation \eqref{d-quer-i}, the
dual quermassintegrals can be extended in an obvious manner:
For $q\in\rbo$, and a star body $K$,
the $(n-q)$-th dual quermassintegral $\wt W_{n-q}(K)$
is defined by
\begin{equation}\label{def-quermass-q}
\wt W_{n-q}(K) = \frac1n \int_{\sn} \rho_K^q (u)\, du.
\end{equation}

For real $q\neq 0$, define the {\it normalized dual quermassintegral} $\bar W_{n-q}(K)$ by
\begin{equation}\label{def-quermass-q-normalized}
\bar W_{n-q}(K) = \left(\frac1{n\omega_n}\int_{\sn} \rho_K^q(u)\, du\right)^\frac1q,
\end{equation}
and for $q=0$, by
\begin{equation}\label{def-quermass-q-normalized-zero}
\bar W_n(K) =\exp\left(\frac1{n\omega_n}\int_{\sn} \log\rho_K(u)\, du \right).
\end{equation}
It will be helpful to also adopt the following notation:
\begin{equation}\label{def-quermass-q-normalized-zero-v}
\text{$\wt V_q (K) = \wt W_{n-q}(K)$ \quad and \quad $\bar V_q (K) = \bar W_{n-q}(K)$,}
\end{equation}
called the {\it $q$-th dual volume} of $K$ and the {\it normalized $q$-th dual volume} of $K$, respectively. Note, in particular, the fact that
\begin{equation}\label{normalized-volume}
\bar V_n (K) = [V(K)/{\omega_n}]^{1/n}.
\end{equation}

\subsection{The radial Gauss map of a convex body}\label{2.2}
Let $K$ be a convex body in $\rn$. For each $v\in \sn$, the hyperplane
\begin{equation*}
H_K(v) = \{x \in \rn : x\cdot v = h_K(v)\}
\end{equation*}
is called the {\it supporting hyperplane to $K$ with unit normal $v$}.

For $\sigma\subset\partial K$, the {\it spherical image of $\sigma$} is defined by
\begin{equation*}
\bu_K(\sigma) = \{v\in\sn : x\in H_K(v)\ \text{for some}\ x\in\sigma \} \subset\sn.
\end{equation*}
For $\eta\subset\sn$, the {\it reverse spherical image of $\eta$} is defined by
\[
\bx_K(\eta)=\{ x\in \partial K : x\in H_K(v) \text{ for some } v\in \eta\} \subset \partial K.
\]

Let $\sigma_K\subset \partial K$ be the set consisting of all $x\in \partial K$, for which the set
$\bu_K(\{x\})$, which we frequently abbreviate as $\bu_K(x)$, contains more than a single element. It is well known that $\hm (\sigma_K) = 0$
(see p. 84 of Schneider \cite{S14}).
The function
\begin{equation}\label{sim}
\nu_K : \partial K \setminus \sigma_K \to \sn,
\end{equation}
defined by letting $\nu_K(x)$ be the unique element in $\bu_K(x)$, for each
$x \in \partial K \setminus \sigma_K $, is called the {\it spherical image map} of $K$ and is known to be continuous (see Lemma 2.2.12 of Schneider \cite{S14}). In the introduction, $\partial K \setminus \sigma_K$ was abbreviated as $\partial'\negthinspace K$, something we will often do. Note that from definition \eqref{s-a-m}, it follows immediately that for each continuous $g:\sn\to \rbo$,
\begin{equation}\label{s-a-m-1}
\int_{\partial'\negthinspace K} g(\nu_K(x))\, d\hm(x) = \int_{\sn} g(v)\,dS(K,v).
\end{equation}

The set $\eta_K\subset \sn$ consisting of all $v\in \sn$, for which the set $\bx_K(v)$ contains
more than a single element, is of $\mathcal H^{n-1}$-measure $0$ (see Theorem 2.2.11 of Schneider \cite{S14}). The function
\begin{equation}\label{rsim}
x_K : \sn  \setminus \eta_K \to \partial K,
\end{equation}
defined, for each $v \in \sn  \setminus \eta_K $,
by letting $x_K(v)$ be the unique element in $\bx_K(v)$, is called the {\it reverse spherical image map}. The vectors in $\sn  \setminus \eta_K$ are called the {\it regular normal vectors} of $K$. Thus, $v\in\sn$ is a regular normal vector of $K$ if and only if
$\partial K \cap H_K(v)$ consists of a single point. The function $x_K$
is well known to be continuous (see Lemma 2.2.12 of Schneider \cite{S14}).

For $K\in \kno$, define  the {\it radial map} of $K$,
\[
r_K : \sn \to \partial K\qquad\text{by}\qquad r_K(u) = \rho_K(u) u \in \partial K,
\]
for $u\in\sn$. Note that $r_K^{-1}: \partial K \to \sn$ is just the restriction of the map $\xoverline{\,\cdot\,}: \rn\setminus\{0\} \to\sn$ to $\partial K$.

For $\omega\subset\sn$, define the {\it radial Gauss image of $\omega$}  by
\[
\balpha_K(\omega) = \bu_K(r_K(\omega)) \subset \sn.
\]
Thus, for $u\in\sn$,
\begin{equation}\label{2.2-1}
\balpha_K(u) = \{ v\in \sn : r_K(u) \in H_K(v)\}.
\end{equation}

Define the {\it radial Gauss map} of the convex body $K\in\kno$
\[
\alpha_K : \sn\setminus \omega_K \to \sn\qquad\text{by}\qquad  \alpha_K = \nu_K \circ r_K,
\]
where $\omega_K = \xoverline{\sigma_K} = r_K^{-1} (\sigma_K)$. Since $r_K^{-1}= \xoverline{\,\cdot\,}$ is a bi-Lipschitz map between the spaces $\partial K$ and $\sn$ it follows that
$\omega_K$ has spherical Lebesgue measure $0$. Observe that if $u\in\sn\setminus \omega_K$, then $\balpha_K(u)$ contains only the element $\alpha_K(u)$.  Note that since both $\nu_K$ and $r_K$ are continuous, $\alpha_K$ is continuous.

For $\eta\subset\sn$, define the {\it reverse radial Gauss image} of $\eta$ by
\begin{equation}\label{alpha-star}
\balpha^*_K(\eta) = r^{-1}_K ( \bx_K(\eta) )= \xoverline{\bx_K(\eta)}.
\end{equation}
Thus,
\[
\pmb{\alpha}^*_K(\eta) =\{\xoverline{x} : \text{  $x\in\partial K$ where $x\in H_K(v)$ for some $v \in \eta$}\}.
\]
Define the {\it reverse radial Gauss map} of the convex body $K\in\kno$,
\begin{equation}\label{alpha-star-function}
\alpha^*_K : \sn\setminus \eta_K \to \sn,\qquad\text{by}\qquad  \alpha^*_K = r_K^{-1} \circ x_K.
\end{equation}
Note that since both $r_K^{-1}$ and $x_K$ are continuous, $\alpha^*_K $ is continuous.

Note for a subset $\eta \subset \sn$,
\begin{equation}\label{2.2-2}
\pmb\alpha^*_K(\eta) = \{ u\in \sn : r_K(u) \in H_K(v) \text{ for some } v\in \eta\}.
\end{equation}
For $u \in \sn$ and $\eta \subset \sn$, it is easily seen that
\begin{equation}\label{efg}
u\in \balpha^*_K(\eta)\quad \text{if and only if}\quad \balpha_K(u)\cap \eta \neq \varnothing.
\end{equation}
Thus,
\[
\text{$\balpha^*_K$ is monotone non-decreasing with respect to set inclusion.}
\]
If $\eta$ is the singleton containing only $v\in\sn$, then \eqref{efg} reduces to
\begin{equation}\label{reverse}
u\in \balpha^*_K(v)\quad \text{if and only if}\quad v\in\balpha_K(u).
\end{equation}
If $u\not\in\omega_K$, then $\balpha_K(u)=\{ \alpha_K(u) \}$, and \eqref{efg} becomes
\begin{equation}\label{rot}
u\in \balpha^*_K(\eta)\quad \text{if and only if}\quad \alpha_K(u)\in \eta,
\end{equation}
and hence \eqref{rot} holds for almost all $u\in\sn$, with respect to spherical Lebesgue measure.

The following lemma consists of a basic fact regarding the reverse radial Gauss map. This fact is Lemma 2.2.14 of Schneider \cite{S14}, an alternate proof of which is presented below.

\begin{lemm}\label{Lebesgue}
If $\eta \subset \sn$ is a Borel set, then $\balpha^*_K(\eta)=\xoverline{ \bx_K(\eta)} \subset\sn$ is spherical Lebesgue measurable.
\end{lemm}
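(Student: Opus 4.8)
The plan is to reduce the measurability of $\balpha^*_K(\eta) = \xoverline{\bx_K(\eta)}$ to the measurability of the reverse spherical image $\bx_K(\eta) \subset \partial K$, and then to exploit the structure of the set-valued map $\bx_K$ together with the good behavior of the Borel $\sigma$-algebra under the operations involved. Since $r_K^{-1} = \xoverline{\,\cdot\,}|_{\partial K}$ is a bi-Lipschitz homeomorphism between $\partial K$ and $\sn$, the image $\xoverline{\bx_K(\eta)}$ is (Lebesgue, in fact Borel) measurable in $\sn$ as soon as $\bx_K(\eta)$ is measurable in $\partial K$; so everything comes down to analyzing $\bx_K(\eta)$.

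First I would dispose of the case where $\eta$ is compact. The key observation is that the set-valued map $v \mapsto H_K(v)\cap\partial K = \bx_K(\{v\})$ has closed graph: if $v_k \to v$ and $x_k \in H_K(v_k)\cap\partial K$ with $x_k \to x$, then $x\cdot v_k = h_K(v_k) \to h_K(v)$ by continuity of the support function, while $x_k\cdot v_k \to x\cdot v$, so $x\cdot v = h_K(v)$, i.e. $x\in H_K(v)\cap\partial K$. Consequently, for compact $\eta$, the set $\bx_K(\eta) = \bigcup_{v\in\eta} (H_K(v)\cap\partial K)$ is exactly the projection onto $\partial K$ of the compact set $\{(x,v)\in\partial K\times\eta : x\cdot v = h_K(v)\}$, hence is compact, hence Borel. (Alternatively one checks directly that $\bx_K(\eta)$ is closed using the closed-graph property and compactness of $\eta$.)

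Next I would upgrade from compact $\eta$ to arbitrary Borel $\eta$. The natural route is a monotone-class / Dynkin-system argument on the collection $\mathcal{D}$ of sets $\eta\subset\sn$ for which $\bx_K(\eta)$ is Borel in $\partial K$. The map $\bx_K$ is monotone with respect to inclusion and is countably additive on unions, $\bx_K(\bigcup_m \eta_m) = \bigcup_m \bx_K(\eta_m)$, so $\mathcal{D}$ is closed under countable increasing unions (indeed under arbitrary countable unions). Every compact set lies in $\mathcal{D}$ by the previous step, and every open set is a countable union of compact sets, so open sets lie in $\mathcal{D}$ as well. The remaining issue — and this is where the only real subtlety lies — is complementation: $\bx_K$ does not commute with complements, since a boundary point may lie in supporting hyperplanes of several normal directions. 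Here I would use that the overlap is small: the set $\eta_K$ of $v$ for which $\bx_K(v)$ is not a singleton has $\hm$-measure zero, and more to the point, off $\eta_K$ the map $\alpha^*_K = r_K^{-1}\circ x_K$ is a genuine continuous function. Writing $\eta = (\eta\setminus\eta_K)\,\cup\,(\eta\cap\eta_K)$, the first piece gives $\xoverline{\bx_K(\eta\setminus\eta_K)} = (\alpha^*_K)^{-1}(\text{something})$-type set which is handled by continuity of $\alpha^*_K$ on $\sn\setminus\eta_K$ together with Borel measurability of $\eta_K$ (itself a consequence of the closed-graph property — $\eta_K$ is an $F_\sigma$). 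For the second piece one shows $\xoverline{\bx_K(\eta\cap\eta_K)}$ is contained in a set of measure zero, or handles it via the $\sigma$-compactness of $\partial K$ and an exhaustion, so it too is Lebesgue measurable (this is why the lemma claims only Lebesgue, not Borel, measurability of $\balpha^*_K(\eta)$).

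The main obstacle, then, is precisely the failure of $\bx_K$ to respect complementation, forcing one away from a clean "Borel in, Borel out" statement and into a decomposition across the exceptional set $\eta_K$; the payoff of that decomposition is that on the good part $\sn\setminus\eta_K$ one has an honest continuous map $\alpha^*_K$, reducing measurability there to preimages of Borel sets, while the bad part is absorbed into a null set. Assembling the two pieces and pushing forward under the bi-Lipschitz map $r_K^{-1}$ yields that $\balpha^*_K(\eta)$ is spherical Lebesgue measurable, as claimed.
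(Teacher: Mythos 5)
Your compact-set step is correct (the closed-graph argument for $v\mapsto H_K(v)\cap\partial K$ is exactly right), and reducing matters from $\partial K$ to $\sn$ via the bi-Lipschitz map $r_K^{-1}$ is fine, but the step handling general Borel $\eta$ contains two genuine errors. First, $\xoverline{\bx_K(\eta\setminus\eta_K)}$ is the \emph{forward} image $\alpha^*_K(\eta\setminus\eta_K)$, not a set of the form $(\alpha^*_K)^{-1}(\cdot)$: the map $\alpha^*_K$ sends normal vectors to radial directions and you are taking the image of the set of normals $\eta\setminus\eta_K$. Continuity of $\alpha^*_K$ therefore buys you nothing directly, since continuous images of Borel sets are in general only analytic, not Borel, and $\alpha^*_K$ need not be injective (distinct regular normals can support $K$ at the same point of $\sigma_K$). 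Second, and more seriously, the claim that $\xoverline{\bx_K(\eta\cap\eta_K)}$ is contained in a null set is false: for a polytope $P$ every facet normal $v_i$ lies in $\eta_P$, and $\xoverline{\bx_P(v_i)}=\balpha^*_P(v_i)=\sn\cap\Delta_i$ has positive spherical Lebesgue measure --- this is precisely why the dual curvature measures of polytopes are nontrivial. The exceptional set you want is not $\eta_K$ (normals with non-singleton contact set) but $\omega_K=\xoverline{\sigma_K}$ (radial directions of boundary points admitting more than one normal); the failure of $\bx_K$ to respect complements is caused by points of $\sigma_K$, and $\omega_K$, unlike $\xoverline{\bx_K(\eta_K)}$, genuinely is null.

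The paper's proof repairs exactly this by switching from image to preimage via the \emph{forward} radial Gauss map $\alpha_K=\nu_K\circ r_K$: by \eqref{rot}, for every $u\notin\omega_K$ one has $u\in\balpha^*_K(\eta)$ if and only if $\alpha_K(u)\in\eta$, so $\balpha^*_K(\eta)$ coincides with the preimage $\alpha_K^{-1}(\eta)$ outside the null set $\omega_K$. Preimages of Borel sets under the continuous map $\alpha_K$ are Borel in $\sn\setminus\omega_K$, hence Lebesgue measurable in $\sn$, and altering a measurable set within a null set preserves Lebesgue measurability. If you wish to salvage your architecture without this switch, the only route I see is the heavy one: $\bx_K(\eta)$ is the projection of the Borel set $\{(x,v)\in\partial K\times\eta: x\cdot v=h_K(v)\}$, hence analytic, hence universally measurable --- but that invokes far more machinery than the lemma requires, and your stated decomposition along $\eta_K$ cannot be made to work as written.
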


\begin{proof}

The continuity of $\alpha_K$ assures that
the inverse image $\alpha_K^{-1}(\eta)$,
of the Borel set $\eta$ in $\sn$,
is a Borel set
in the space $\sn \setminus \omega_K$ with relative topology.
Since each Borel set in  $\sn \setminus \omega_K$ is just the restriction
 of a Borel set in $\sn$,
it follows that $\alpha_K^{-1}(\eta)$ is
the restriction of a Borel set in $\sn$ to  $\sn \setminus \omega_K$,
and is thus Lebesgue measurable in $\sn$ (because $\omega_K$
has Lebesgue measure 0).
Since $\balpha^*_K(\eta)$ and $\alpha_K^{-1}(\eta)$ differ
by a set of Lebesgue measure $0$, the set $\balpha^*_K(\eta)$ must be
Lebesgue measurable in $\sn$ as well.
\end{proof}

If $g : \sn \to \mathbb R$ is a Borel function,
then $g\circ \alpha_K$ is spherical Lebesgue measurable because it is just the composition of a Borel function $g$ and a continuous function $\alpha_K$ in $\sn\setminus \omega_K$ with $\omega_K$
 having Lebesgue measure 0. Moreover, if $g$ is a bounded Borel function, then $g\circ \alpha_K$ is spherical Lebesgue integrable. In particular, $g\circ \alpha_K$ is spherical Lebesgue integrable, for each continuous function $g : \sn \to \mathbb R$.

\begin{lemm}\label{converge}
Suppose
$K_i\in\kno$ with  $\lim_{i\to\infty}K_i = K_0\in \kno$.
Let $\omega=\cup_{i=0}^\infty \omega_{K_i}$ be the set (of $\hm$-measure $0$) off of which all of the $\alpha_{K_i}$ are defined. Then if $u_i\in\sn\setminus \omega$ are such that $\lim_{i\to\infty}u_i= u_0\in\sn\setminus \omega$, then
$\lim_{i\to\infty}\alpha_{K_i}(u_i) = \alpha_{K_0}(u_0)$.
\end{lemm}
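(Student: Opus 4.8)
The plan is a soft compactness argument whose engine is the equivalence, on $\kno$, of the Hausdorff metric and the radial metric: since $K_i\to K_0$ in $\kno$, both $h_{K_i}\to h_{K_0}$ and $\rho_{K_i}\to\rho_{K_0}$ uniformly on $\sn$. First I would observe that the statement even makes sense: each $u_i\in\sn\setminus\omega\subset\sn\setminus\omega_{K_i}$ and $u_0\in\sn\setminus\omega\subset\sn\setminus\omega_{K_0}$, so the vectors $\alpha_{K_i}(u_i)$ and $\alpha_{K_0}(u_0)$ are all defined, and, since $\omega_{K_0}=\xoverline{\sigma_{K_0}}$, the boundary point $r_{K_0}(u_0)$ lies in $\partial K_0\setminus\sigma_{K_0}$.

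Next I would record the convergence of the contact points. Put $x_i=r_{K_i}(u_i)=\rho_{K_i}(u_i)\,u_i\in\partial K_i$. Because $\rho_{K_i}\to\rho_{K_0}$ uniformly, $\rho_{K_0}$ is continuous, and $u_i\to u_0$, we get $\rho_{K_i}(u_i)\to\rho_{K_0}(u_0)$ and hence $x_i\to x_0:=r_{K_0}(u_0)$. Writing $v_i=\alpha_{K_i}(u_i)=\nu_{K_i}(x_i)$, the definition of the spherical image map gives $x_i\in H_{K_i}(v_i)$, i.e.
\[
x_i\cdot v_i=h_{K_i}(v_i).
\]

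Since $\sn$ is compact, it suffices to show that every convergent subsequence of $(v_i)$ has limit $\alpha_{K_0}(u_0)$; the full sequence then converges to that point. So let $v_{i_k}\to v\in\sn$. Passing to the limit in $x_{i_k}\cdot v_{i_k}=h_{K_{i_k}}(v_{i_k})$ and using
\[
|h_{K_{i_k}}(v_{i_k})-h_{K_0}(v)|\le \|h_{K_{i_k}}-h_{K_0}\|+|h_{K_0}(v_{i_k})-h_{K_0}(v)|\to 0,
\]
which follows from the uniform convergence of support functions and the continuity of $h_{K_0}$, we obtain $x_0\cdot v=h_{K_0}(v)$, i.e.\ $x_0\in H_{K_0}(v)$, so $v\in\bu_{K_0}(x_0)$. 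But $x_0=r_{K_0}(u_0)\notin\sigma_{K_0}$, so $\bu_{K_0}(x_0)$ is the singleton $\{\nu_{K_0}(x_0)\}=\{\alpha_{K_0}(u_0)\}$; hence $v=\alpha_{K_0}(u_0)$, as required.

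I do not expect a real obstacle: the argument is entirely qualitative. The one place where the hypothesis is used essentially is the final step, where the limit normal $v$ is pinned down to equal $\alpha_{K_0}(u_0)$ — this works precisely because $u_0\notin\omega$ forces $r_{K_0}(u_0)$ to be a regular boundary point of $K_0$, with a unique outer unit normal. (One may also note in passing that $\omega=\bigcup_{i\ge 0}\omega_{K_i}$ is a countable union of sets of $\hm$-measure $0$ and hence has measure $0$, so the restriction $u_i,u_0\notin\omega$ is harmless in applications; this plays no role in the convergence itself.)
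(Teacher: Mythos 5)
Your proposal is correct and follows essentially the same route as the paper's proof: convergence of the contact points $x_i=r_{K_i}(u_i)\to r_{K_0}(u_0)$, the support identity $x_i\cdot v_i=h_{K_i}(v_i)$, passage to the limit along a convergent subsequence via uniform convergence of support functions, identification of the limit normal by the regularity of $r_{K_0}(u_0)$, and the subsequence argument on the compact sphere. No gaps.
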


\begin{proof}
Since the sequence of radial maps
$r_{K_i}$ converges to
$r_{K_0}$, uniformly,
$r_{K_i}(u_i)\to r_{K_0}(u_0)$.  Let
$x_i=r_{K_i}(u_i)$
and
$v_i=\nu_{K_i}(r_{K_i}(u_i))=\alpha_{K_i}(u_i)$.

Since $x_i\in\partial K_i$ and (since $u_i\not\in \omega_{K_i}$) the vector $v_i$ is the unique outer unit normal to the support hyperplane of $K_i$ at $x_i$. Thus, we have
\[
x_i\cdot v_i = h_{K_i}(v_i).
\]
Suppose that a subsequence of the unit vectors $v_i$ (which we again call $v_i$) converges to $v'\in\sn$. Since $h_{K_i}$ converges to
$h_{K_0}$, uniformly,
$h_{K_i}(v_i)\to h_{K_0}(v')$.  But this, together with $x_i\to x_0$ and $v_i\to v'$ gives
\begin{equation}\label{mm}
x_0 \cdot v' = h_{K_0}(v').
\end{equation}
But $x_0=r_{K_0}(u_0)$ is a boundary point of $K_0$, and since $u_0\not\in \omega_{K_0}$,
we conclude from \eqref{mm} that $v'$ must be the unique outer unit normal of $K_0$ at
$x_0=r_{K_0}(u_0)$. And hence, $v'=\nu_{K_0}(  r_{K_0}(u_0)  )=\alpha_{K_0}(u_0)$. Thus, all convergent subsequences of
$v_i=\alpha_{K_i}(u_i)$ converge to $\alpha_{K_0}(u_0)$.

Consider a subsequence of $\alpha_{K_i}(u_i)$. Since $\sn$ is compact, the subsequence has a subsequence that converges, and by the above, necessarily it converges to $\alpha_{K_0}(u_0)$. Thus, every subsequence of $\alpha_{K_i}(u_i)$ has a subsequence that converges to $\alpha_{K_0}(u_0)$.
\end{proof}

\begin{lemm}\label{abc}
If $\{\eta_j\}$ is a sequence of subsets on $\sn$, then
\[
\balpha^*_K\left(\cup_{j} \eta_j\right) = \cup_{j} \balpha^*_K(\eta_j).
\]
\end{lemm}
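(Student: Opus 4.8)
The plan is to unwind the definition of the reverse radial Gauss image and reduce the claim to the trivial fact that a set meets a union precisely when it meets one of the members. Concretely, I would argue from the characterization \eqref{efg}: for any $u\in\sn$ and any subset $\eta\subset\sn$, one has $u\in\balpha^*_K(\eta)$ if and only if $\balpha_K(u)\cap\eta\neq\varnothing$. Applying this with $\eta=\cup_j\eta_j$ gives that $u\in\balpha^*_K(\cup_j\eta_j)$ iff $\balpha_K(u)\cap(\cup_j\eta_j)\neq\varnothing$; since $\balpha_K(u)\cap(\cup_j\eta_j)=\cup_j\bigl(\balpha_K(u)\cap\eta_j\bigr)$, this union is nonempty iff $\balpha_K(u)\cap\eta_j\neq\varnothing$ for some $j$, i.e.\ iff $u\in\balpha^*_K(\eta_j)$ for some $j$, i.e.\ iff $u\in\cup_j\balpha^*_K(\eta_j)$. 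Reading this chain of equivalences in both directions yields the two inclusions, hence the asserted equality.

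An even more transparent route, which I might adopt instead, is to observe that \eqref{alpha-star} exhibits $\balpha^*_K(\eta)=\xoverline{\bx_K(\eta)}$ as a composition of two operations on subsets of $\sn$: first $\eta\mapsto\bx_K(\eta)=\{x\in\partial K:x\in H_K(v)\text{ for some }v\in\eta\}$, and then the radial projection $E\mapsto\xoverline{E}$. Both of these manifestly carry unions to unions --- the first because the existential ``for some $v\in\eta$'' distributes over a union in $\eta$, the second because $\xoverline{\,\cdot\,}$ acts pointwise --- so their composition does as well. I do not anticipate any genuine obstacle here, since the lemma is a purely set-theoretic bookkeeping fact; the only point to stay alert to is that the analogue for intersections is false (there one obtains merely the inclusion $\subseteq$), so one must make sure the argument rests only on the distributivity of $\cap$ over $\cup$ and never silently invokes its converse --- and indeed the argument above does not.
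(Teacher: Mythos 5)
Your argument is correct and is essentially the paper's: the paper also proves the identity by pure set-theoretic bookkeeping, establishing the two inclusions via the monotonicity of $\balpha^*_K$ and the pointwise decomposition of $\balpha^*_K(\eta)$ over $v\in\eta$, which is the same underlying fact as your appeal to \eqref{efg} (or, in your second route, to the existential in the definition of $\bx_K$ distributing over unions). The only difference is packaging — a single chain of equivalences versus two separate inclusions — and your closing remark about intersections is a sound sanity check but not needed.
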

\begin{proof}
If $v\in \cup_j \eta_j$, then $v\in \eta_{j_1}$ for some $j_1$, and
$\balpha^*_K(v) \subseteq \balpha^*_K(\eta_{j_1}) \subseteq \cup_{j} \balpha^*_K(\eta_j)$.
Thus, $\balpha^*_K\left(\cup_{j} \eta_j\right) \subseteq \cup_{j} \balpha^*_K(\eta_j)$.
If $u \in \cup_{j} \balpha^*_K(\eta_j)$, then for some $j_2$, we have $u\in \balpha^*_K(\eta_{j_2})
\subseteq \balpha^*_K\left(\cup_{j} \eta_j\right)$. Thus,
$\balpha^*_K\left(\cup_{j} \eta_j\right) \supseteq \cup_{j} \balpha^*_K(\eta_j)$.
\end{proof}

\begin{lemm}\label{hij}
If $\{\eta_j\}$
is a sequence of pairwise disjoint sets in $\sn$, then
$\{\balpha^*_K(\eta_j) \setminus \omega_K\}$ is pairwise disjoint as well.
\end{lemm}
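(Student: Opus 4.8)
The plan is to reduce the statement to the pointwise characterization \eqref{rot} of membership in a reverse radial Gauss image off the exceptional set $\omega_K$. Recall that for $u\in\sn\setminus\omega_K$ the set $\balpha_K(u)$ is the singleton $\{\alpha_K(u)\}$, and consequently, by \eqref{rot}, for every subset $\eta\subset\sn$ one has $u\in\balpha^*_K(\eta)$ precisely when $\alpha_K(u)\in\eta$. So first I would fix indices $i\neq j$ and take an arbitrary point $u$ lying in $\bigl(\balpha^*_K(\eta_i)\setminus\omega_K\bigr)\cap\bigl(\balpha^*_K(\eta_j)\setminus\omega_K\bigr)$, aiming for a contradiction. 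Since $u\notin\omega_K$, applying \eqref{rot} twice — once with $\eta=\eta_i$ and once with $\eta=\eta_j$ — yields $\alpha_K(u)\in\eta_i$ and $\alpha_K(u)\in\eta_j$ simultaneously, so $\alpha_K(u)\in\eta_i\cap\eta_j$. But $\{\eta_j\}$ is pairwise disjoint, forcing $\eta_i\cap\eta_j=\varnothing$, a contradiction. Hence the intersection is empty, which is exactly the assertion that $\{\balpha^*_K(\eta_j)\setminus\omega_K\}$ is pairwise disjoint.

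There is essentially no hard step here; the one point that must be handled with care — and the reason the set $\omega_K$ must be deleted — is that for a point $u$ with more than one outer unit normal, $\balpha_K(u)$ need not be a singleton, so such a $u$ could simultaneously belong to $\balpha^*_K(\eta_i)$ and $\balpha^*_K(\eta_j)$ for disjoint $\eta_i,\eta_j$ (it suffices that $\balpha_K(u)$ meet each of them, by \eqref{efg}). Removing $\omega_K$, which has spherical Lebesgue measure $0$, collapses $\balpha_K(u)$ to the single value $\alpha_K(u)$ and makes the argument go through verbatim. No continuity, measurability, or convexity input beyond the already-established equivalence \eqref{rot} is needed.
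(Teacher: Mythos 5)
Your argument is correct and is essentially the paper's own proof: the paper likewise takes a point $u$ in two of the sets off $\omega_K$, notes that $\balpha_K(u)$ is then a singleton, and uses \eqref{efg} (of which your \eqref{rot} is just the singleton specialization) to contradict the disjointness of $\eta_i$ and $\eta_j$. Nothing further is needed.
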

\begin{proof}
Suppose there exists a $u$ such that
$u \in \balpha^*_K(\eta_{j_1}) \setminus \omega_K$ and $u \in \balpha^*_K(\eta_{j_2}) \setminus \omega_K$. Since $u \notin \omega_K$ we know that $\balpha_K(u)$ is a singleton.
But \eqref{efg}, in conjunction with
$u \in \balpha^*_K(\eta_{j_1})$  and $u \in \balpha^*_K(\eta_{j_2})$, yields
 $\balpha_K(u) \cap \eta_{j_1} \neq \varnothing$ and $\balpha_K(u) \cap \eta_{j_2} \neq \varnothing$, which contradicts the fact that $\eta_{j_1} \cap \eta_{j_2} \neq \varnothing$, since $\balpha_K(u)$ is a singleton.
\end{proof}

The reverse radial Gauss image of a convex body and the radial Gauss image of its polar body are related.

\begin{lemm}\label{measure-zero-bold}
If $K\in\kno$, then
\[
\balpha^*_K(\eta) = \balpha_{K^*}(\eta),
\]
for each $\eta \subset \sn$.
\end{lemm}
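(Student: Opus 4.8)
The plan is to deduce the identity from a clean symmetry between the radial Gauss maps of $K$ and of its polar body $K^*$, using only the polarity relations $\rho_K = 1/h_{K^*}$ and $h_K = 1/\rho_{K^*}$ of \eqref{polar-identity}. Since $K\in\kno$ forces $K^*\in\kno$, the map $\balpha_{K^*}$ is defined, and every support and radial function appearing below is strictly positive.

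\emph{Step 1: the pointwise statement.} I would first show that for all $u,w\in\sn$,
\[
w\in\balpha_{K^*}(u) \quad\Longleftrightarrow\quad u\in\balpha_K(w).
\]
By \eqref{2.2-1}, $u\in\balpha_K(w)$ means $r_K(w)=\rho_K(w)\,w\in H_K(u)$, i.e. $\rho_K(w)\,(w\cdot u)=h_K(u)$. Applying the same description to $K^*$, $w\in\balpha_{K^*}(u)$ means $r_{K^*}(u)=\rho_{K^*}(u)\,u\in H_{K^*}(w)$, i.e. $\rho_{K^*}(u)\,(u\cdot w)=h_{K^*}(w)$. Substituting $\rho_{K^*}=1/h_K$ and $h_{K^*}=1/\rho_K$ turns this last equation into $(u\cdot w)/h_K(u)=1/\rho_K(w)$, which is exactly $\rho_K(w)\,(w\cdot u)=h_K(u)$. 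Strict positivity of all quantities makes every step reversible; note that both conditions force $u\cdot w>0$.

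\emph{Step 2: pass to sets and invoke \eqref{efg}.} Directly from the definitions of $\bu_{K^*}$ and $r_{K^*}$ (the radial Gauss image of a union is the union of the radial Gauss images), $\balpha_{K^*}(\eta)=\bu_{K^*}(r_{K^*}(\eta))=\bigcup_{u\in\eta}\balpha_{K^*}(u)$. Hence, by Step 1,
\[
\balpha_{K^*}(\eta)=\{w\in\sn:\ u\in\balpha_K(w)\ \text{for some}\ u\in\eta\}=\{w\in\sn:\ \balpha_K(w)\cap\eta\neq\varnothing\},
\]
and by \eqref{efg} the right-hand side is precisely $\balpha^*_K(\eta)$, which finishes the proof.

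There is no genuine obstacle here; the only care needed is bookkeeping — tracking which body each $h$ and $\rho$ belongs to when invoking \eqref{polar-identity}, and observing that positivity upgrades the single algebraic identity of Step 1 to an honest equivalence. Alternatively, one may bypass Step 1 and instead expand $\balpha^*_K(\eta)$ through \eqref{2.2-2} and $\balpha_{K^*}(\eta)$ through the definition of the radial Gauss image, then match the two resulting set descriptions verbatim; this amounts to the same computation written out linearly.
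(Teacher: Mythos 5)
Your proof is correct and follows essentially the same route as the paper's: the same computation with the polar identities $\rho_{K^*}=1/h_K$, $h_{K^*}=1/\rho_K$ converts the support condition for $K^*$ into the one for $K$, and the passage between $\balpha_K$ and $\balpha^*_K$ is exactly \eqref{efg} (the paper uses its singleton case \eqref{reverse} after first reducing to $\eta=\{v\}$). The only difference is the order in which you handle the reduction to single points versus general sets, which is cosmetic.
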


\begin{proof}
If suffices to show that
\[
\balpha^*_K(v) = \balpha_{K^*}(v),
\]
for each $v\in \sn$.
Fix $v\in \sn$.
From \eqref{2.2-1}, we see that for $u \in \sn$,
\[
\text{$u \in \balpha_{K^*}(v)$ if and only if $H_{K^*}(u)$ is a support hyperplane at $\rho_{K^*}(v) v$,}
\]
that is,
\[
\text{$u \in \balpha_{K^*}(v)$ if and only if $h_{K^*}(u) = (u \cdot v)\,\rho_{K^*}(v)$,}
\]
and, by \eqref{polar-identity}, this is the case if and only if
\[
h_{K}(v) = (v \cdot u)\,\rho_{K}(u) = v \cdot r_{K}(u),
\]
or equivalently, using \eqref{2.2-1}, if and only if,
\[
v \in \balpha_{K}(u).
\]
But, from \eqref{reverse} we know that $v \in \balpha_{K}(u)$ if and only if $u\in \balpha^*_K(v)$.
\end{proof}

For almost all $v\in\sn$, we have $\balpha^*_K(v) = \{ \alpha^*_K(v)\}$, and for almost all $v\in\sn$, we have $\balpha_{K^*}(v) = \{ \alpha_{K^*}(v)\}$. These two facts combine to give:

\begin{lemm}\label{measure-zero}
If $K\in\kno$, then
\[
\alpha^*_K = \alpha_{K^*},
\]
almost everywhere on $\sn$, with respect to spherical Lebesgue measure.
\end{lemm}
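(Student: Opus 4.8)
The plan is to reduce the pointwise statement to the set-valued identity already established in Lemma \ref{measure-zero-bold}. The idea is that a relation between the multivalued maps $\balpha^*_K$ and $\balpha_{K^*}$ upgrades to an equality of the (single-valued) maps $\alpha^*_K$ and $\alpha_{K^*}$ as soon as one works on the set of $v$ where both multivalued maps are single-valued, and the only input beyond Lemma \ref{measure-zero-bold} is that the complement of that set is spherical-Lebesgue-null.

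First I would isolate the two exceptional null sets. The map $\alpha^*_K$ is defined on $\sn\setminus\eta_K$, and for every regular normal vector $v\in\sn\setminus\eta_K$ the set $\bx_K(v)$ is a single point, so $\balpha^*_K(v)=\xoverline{\bx_K(v)}=\{\alpha^*_K(v)\}$; this is the first ``for almost all $v$'' remark preceding the lemma, valid since $\hm(\eta_K)=0$. Likewise $\alpha_{K^*}$ is defined on $\sn\setminus\omega_{K^*}$, where $\omega_{K^*}=\xoverline{\sigma_{K^*}}$, and for $v\in\sn\setminus\omega_{K^*}$ the definition of the radial Gauss map gives $\balpha_{K^*}(v)=\{\alpha_{K^*}(v)\}$; here $\omega_{K^*}$ has spherical Lebesgue measure $0$ because $r_{K^*}^{-1}=\xoverline{\,\cdot\,}$ is bi-Lipschitz and $\hm(\sigma_{K^*})=0$. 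Hence the exceptional set $\eta_K\cup\omega_{K^*}$, being a union of two spherical-Lebesgue-null sets, is itself null.

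Then, for every $v\in\sn\setminus(\eta_K\cup\omega_{K^*})$, applying Lemma \ref{measure-zero-bold} to the singleton $\{v\}$ yields
\[
\{\alpha^*_K(v)\}=\balpha^*_K(v)=\balpha_{K^*}(v)=\{\alpha_{K^*}(v)\},
\]
so $\alpha^*_K(v)=\alpha_{K^*}(v)$; since this holds off a null set, $\alpha^*_K=\alpha_{K^*}$ almost everywhere with respect to spherical Lebesgue measure. There is no genuine obstacle here: the only point requiring care — and it has already been recorded in Section \ref{2.2} — is that both exceptional sets $\eta_K$ and $\omega_{K^*}$ are spherical-Lebesgue-null, which is precisely what licenses the a.e.\ conclusion rather than an everywhere one.
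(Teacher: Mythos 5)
Your proposal is correct and is essentially the paper's own argument: the paper derives the lemma from Lemma \ref{measure-zero-bold} together with the two observations that $\balpha^*_K(v)$ and $\balpha_{K^*}(v)$ are singletons off the null sets $\eta_K$ and $\omega_{K^*}$ respectively. You have simply written out explicitly the null-set bookkeeping that the paper leaves implicit.
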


\subsection{Wulff shapes and convex hulls}

Throughout,  $\Omega\subset\sn$ will denote a closed set that is assumed not to be contained in any closed hemisphere of $\sn$.
Let $h: \Omega \to (0, \infty)$ be continuous.
The {\it Wulff shape} $\blb h \brb \in \kno$, also known as the {\it Aleksandrov body}, determined by $h$ is the convex body defined by
\[
\blb h \brb = \{ x \in \rn : \text{$x\cdot v \le h(v)$ for all $v\in \Omega$} \}.
\]
Obviously, if $K\in\kno$,
\[
\blb h_K \brb = K.
\]

Let $\rho: \Omega \to (0, \infty)$ be continuous.
Since $\Omega \subset \sn$ is assumed to be closed, and $\rho$ is continuous,
$\{\rho(u)u : u\in \Omega\}$ is a compact set in $\rn$. Hence,
the convex hull $\bla \rho \bra$  generated by $\rho$,
\[
\bla \rho \bra  = \conv\{\rho(u)u : u\in\Omega\},
\]
is compact as well (see Schneider \cite{S14}, Theorem 1.1.11).
Since $\Omega$ is not contained in any closed hemisphere of $\sn$, the compact convex set $\bla \rho \bra$ contains the origin in its interior.
Obviously, if $K\in\kno$,
\begin{equation}\label{conv-of-body}
\bla \rho_K \bra = K.
\end{equation}

We shall make frequent use of the fact that,
\begin{equation}\label{support-convex-hull}
h_{\sbla \rho \sbra}(v) =
\max\nolimits_{u\in\Omega} (v\cdot u) \rho(u),
\end{equation}
for all $v\in\sn$.

\begin{lemm}
Suppose $\Omega \subset \sn$ is a closed subset of $\sn$ that is not contained in any closed hemisphere of $\sn$ and that $\rho : \Omega \to (0, \infty)$ is continuous.
If $v$ is a regular normal vector of $\bla \rho\bra$, then $\balpha_{\sbla \rho\sbra}^*(v) \subset \Omega$.
\end{lemm}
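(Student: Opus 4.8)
The plan is to reduce everything to the single point at which the supporting hyperplane $H_K(v)$ meets $K:=\bla \rho \bra$, and then to identify that point explicitly using the description \eqref{support-convex-hull} of the support function of a convex hull. Note first that $K\in\kno$ by the hypothesis on $\Omega$. Since $v$ is a regular normal vector of $K$, the set $\partial K\cap H_K(v)$ is a single point, namely $x_K(v)$, and hence by \eqref{alpha-star} the set $\balpha^*_K(v)=\xoverline{\bx_K(v)}$ is the singleton $\{\xoverline{x_K(v)}\}$. So the assertion is precisely that $\xoverline{x_K(v)}\in\Omega$.

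Next I would exhibit a concrete boundary point of $K$ lying on $H_K(v)$. Since $\Omega\subset\sn$ is closed, hence compact, and $\rho$ is continuous, the function $u\mapsto (v\cdot u)\,\rho(u)$ attains its maximum over $\Omega$ at some $u_0\in\Omega$; by \eqref{support-convex-hull} this maximum equals $h_K(v)$. Therefore $x_0:=\rho(u_0)u_0$ satisfies $x_0\cdot v=h_K(v)$, so $x_0\in H_K(v)$, while $x_0\in K$ because $x_0$ belongs to the generating set $\{\rho(u)u:u\in\Omega\}$ of $K$; thus $x_0\in\partial K\cap H_K(v)$.

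Finally, regularity of $v$ forces $x_0=x_K(v)$, since $\partial K\cap H_K(v)$ contains only one point, and hence $\xoverline{x_K(v)}=\xoverline{x_0}=u_0\in\Omega$, as desired. The argument is short; the only point needing a moment's care is that the regularity hypothesis is genuinely used — without it the face $\partial K\cap H_K(v)$ could be higher-dimensional, and $\balpha^*_K(v)$ would then also contain the normalizations of proper convex combinations of points $\rho(u)u$, which need not lie in $\Omega$. I expect no real obstacle beyond this observation.
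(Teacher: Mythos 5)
Your proposal is correct and follows essentially the same route as the paper's proof: both exhibit a maximizer $u_0\in\Omega$ of $u\mapsto (v\cdot u)\rho(u)$ via \eqref{support-convex-hull}, observe that $\rho(u_0)u_0$ lies in $\partial\bla\rho\bra\cap H_{\sbla\rho\sbra}(v)$, and invoke regularity of $v$ to conclude that $\balpha^*_{\sbla\rho\sbra}(v)$ is the singleton $\{u_0\}\subset\Omega$. The only difference is that you spell out the compactness argument for attainment of the maximum, which the paper leaves implicit.
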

\begin{proof} By \eqref{support-convex-hull} there exists a $u_0\in\Omega$ such that
\[
h_{\sbla \rho \sbra}(v) =
(u_0\cdot v) \rho(u_0).
\]
This means that
\begin{equation}\label{t5t}
\rho(u_0)u_0 \in H_{\sbla \rho \sbra}(v)=\{x\in\rn : x \cdot v =h_{\sbla \rho \sbra}(v)\},
\end{equation}
and since clearly $\rho(u_0)u_0\in  \bla \rho\bra$, it follows, from \eqref{t5t} that $\rho(u_0)u_0\in  \partial \bla \rho\bra$ and $u_0\in \balpha_{\sbla \rho\sbra}^*(v)$. But $v$ is a regular normal vector of $\bla \rho\bra$, hence
\[
\balpha_{\sbla \rho\sbra}^*(v) =\{\alpha_{\sbla \rho\sbra}^*(v)   \}.
\]
We conclude that $\alpha_{\sbla \rho\sbra}^*(v)=u_0 \in\Omega$, which completes the proof.
\end{proof}

The Wulff shape of a function and the convex hull generated by its reciprocal are related.

\begin{lemm}\label{wulff-hull}
Let $\Omega\subset\sn$ be a closed set that is assumed not
to be contained in any closed hemisphere of $\sn$.
Let $h: \Omega \to (0,\infty)$ be continuous.
Then the Wulff shape $\blb h \brb$ determined by $h$ and
the convex hull $\bla 1/h \bra$ generated by the function $1/h$ are polar reciprocals of each other; i.e.,
\[
\blb h \brb^* = \bla 1/h \bra.
\]
\end{lemm}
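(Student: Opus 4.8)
The plan is to prove the two set inclusions $\blb h \brb^* \subseteq \bla 1/h \bra$ and $\bla 1/h \bra \subseteq \blb h \brb^*$ separately, using only the definitions of Wulff shape, convex hull, and polar body together with the identities \eqref{polar-identity} and \eqref{support-convex-hull}. First I would record the defining inequalities. By definition, $x \in \blb h \brb$ iff $x \cdot v \le h(v)$ for all $v \in \Omega$, and $x \in \blb h \brb^*$ iff $x \cdot y \le 1$ for all $y \in \blb h \brb$. On the other hand, $\bla 1/h \bra = \conv\{ u/h(u) : u \in \Omega\}$, so a point lies in $\bla 1/h \bra$ precisely when it is a convex combination of points of the form $u/h(u)$ with $u \in \Omega$.

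For the inclusion $\bla 1/h \bra \subseteq \blb h \brb^*$, the key step is to check that each generating point $u/h(u)$, for $u \in \Omega$, lies in $\blb h \brb^*$; since $\blb h \brb^*$ is convex this immediately gives the convex hull. To see $u/h(u) \in \blb h \brb^*$, I must show $(u/h(u)) \cdot y \le 1$ for all $y \in \blb h \brb$. But $y \in \blb h \brb$ means in particular $y \cdot u \le h(u)$ (taking $v = u \in \Omega$ in the defining inequality), and dividing by $h(u) > 0$ gives exactly $(u/h(u)) \cdot y \le 1$. Hence every generator lies in $\blb h \brb^*$, and convexity finishes this direction.

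For the reverse inclusion $\blb h \brb^* \subseteq \bla 1/h \bra$, I would pass to support functions: a compact convex set containing the origin is determined by its support function, and $\bla 1/h \bra \in \kno$ (since $\Omega$ is not contained in a closed hemisphere, as noted in the text just before \eqref{conv-of-body}), while $\blb h \brb^* \in \kno$ as well. By \eqref{support-convex-hull} applied to $\rho = 1/h$, we have $h_{\bla 1/h \bra}(v) = \max_{u \in \Omega} (v \cdot u)/h(u)$ for all $v \in \sn$. On the other hand, by \eqref{polar-identity} the radial function of $\blb h \brb^*$ satisfies $\rho_{\blb h \brb^*} = 1/h_{\blb h \brb}$; and since $\blb h \brb^* \in \kno$, relation \eqref{2.1-2} expresses $1/\rho_{\blb h \brb^*}(u)$ — equivalently $h_{\blb h \brb}(u)$ — as a maximum over $\sn$. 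It is cleaner, though, to work directly: $x \in \blb h \brb^*$ iff $x \cdot y \le 1$ for all $y \in \blb h \brb$, and one shows this forces $x \in \bla 1/h \bra$ by a separation argument — if $x \notin \bla 1/h \bra$, choose $v \in \sn$ with $x \cdot v > h_{\bla 1/h \bra}(v) = \max_{u \in \Omega}(v \cdot u)/h(u)$; then the point $\rho_{\blb h \brb}(v)\,v$... here one must instead observe that $v \cdot u \le h(u)/\big(\max_{w}(v\cdot w)/h(w)\big)^{-1}$ is awkward, so I would rather argue: the inequality $x\cdot v > \max_{u\in\Omega}(v\cdot u)/h(u)$ means $(v\cdot u) < h(u)\,(x\cdot v)$ for all $u\in\Omega$, whence $v/(x\cdot v)$ satisfies $(v/(x\cdot v))\cdot u < h(u)$ for all $u \in \Omega$, i.e. $v/(x \cdot v) \in \blb h \brb$ (its boundary lies strictly inside, so it is interior, but in particular it is in $\blb h \brb$); but then the defining property of $x \in \blb h \brb^*$ forces $x \cdot \big(v/(x\cdot v)\big) \le 1$, i.e. $|v|^2/(x\cdot v) = 1/(x\cdot v) \le 1$... wait, this gives $x \cdot v \ge 1 > 0$, which is consistent, not a contradiction, so I must be more careful about the scaling.

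The cleanest route, and the one I would actually write, is to avoid separation and instead prove equality of support functions directly. We have shown $\bla 1/h \bra \subseteq \blb h \brb^*$ above, so $h_{\bla 1/h\bra} \le h_{\blb h \brb^*}$. For the opposite inequality of support functions, fix $v \in \sn$ and let $x$ be a boundary point of $\blb h \brb^*$ with $x \cdot v = h_{\blb h \brb^*}(v)$; I need $x \cdot v \le \max_{u \in \Omega}(v\cdot u)/h(u)$. Since $x \in \blb h \brb^*$, we have $x \cdot y \le 1$ for all $y \in \blb h \brb$; in particular, writing $1/h_{\blb h \brb} = \rho_{\blb h \brb^*}$ and using that $\blb h \brb$ is a Wulff shape, one knows (this is the content of the Aleksandrov body) that $\rho_{\blb h \brb}(v)\,v \in \partial \blb h \brb$ with $\rho_{\blb h \brb}(v) \le 1/\big(\max_{u\in\Omega}(v\cdot u)/h(u)\big)$ — equivalently $h_{\bla 1/h \bra}(v)\,\rho_{\blb h\brb}(v)\cdot(\text{something})$. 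I expect the main obstacle to be exactly this bookkeeping: relating $h_{\blb h \brb^*}$, the radial function of $\blb h \brb$, and the explicit formula \eqref{support-convex-hull}, all while keeping track of which maxima are over $\Omega$ versus over $\sn$. Once that translation is made carefully — using \eqref{polar-identity} to turn $\blb h\brb^*$ into the reciprocal of $h_{\blb h\brb}$, and \eqref{2.1-2} or \eqref{support-convex-hull} to compute $h_{\blb h\brb}$ from the generating data — the two support functions agree pointwise on $\sn$, and since a convex body in $\kno$ is determined by its support function, $\blb h \brb^* = \bla 1/h \bra$ follows.
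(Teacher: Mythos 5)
Your first inclusion, $\bla 1/h\bra \subseteq \blb h\brb^*$, is correct and complete: each generator $u/h(u)$ lies in the polar because $y\cdot u\le h(u)$ for every $y\in\blb h\brb$, and the polar is convex. The problem is the reverse inclusion, which you attempt twice and complete neither time, ending with an appeal to "bookkeeping made carefully." That is a genuine gap, and the missing ingredient is a single concrete fact: the value of the radial function of the Wulff shape, namely $\rho_{\sblb h\sbrb}(v)=1/m(v)$ where $m(v)=\max_{u\in\Omega}(v\cdot u)/h(u)$. This follows in one line from the definition: $rv\in\blb h\brb$ iff $r(v\cdot u)\le h(u)$ for all $u\in\Omega$, iff $r\,m(v)\le 1$. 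Your separation attempt fails only because you rescale $v$ by $x\cdot v$ instead of by $m(v)$: if $x\in\blb h\brb^*$ and $x\cdot v>m(v)$ for some $v$, take $y=v/m(v)$; then $y\cdot u=(v\cdot u)/m(v)\le h(u)$ for all $u\in\Omega$, so $y\in\blb h\brb$, and $x\cdot y=(x\cdot v)/m(v)>1$, the desired contradiction. (Dividing by $x\cdot v$ produces the tautology $1\le 1$, as you observed.) Likewise, in your support-function attempt you assert only $\rho_{\sblb h\sbrb}(v)\le 1/m(v)$, which is the useless direction; you need $\rho_{\sblb h\sbrb}(v)\ge 1/m(v)$ (i.e., $v/m(v)\in\blb h\brb$) to deduce $x\cdot v\le 1/\rho_{\sblb h\sbrb}(v)\le m(v)$ from $x\cdot\bigl(\rho_{\sblb h\sbrb}(v)v\bigr)\le 1$.

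Once you have $\rho_{\sblb h\sbrb}=1/m$, the whole lemma is immediate and neither inclusion argument is needed: by \eqref{support-convex-hull} with $\rho=1/h$ one has $m(v)=h_{\sbla 1/h\sbra}(v)$, so $1/\rho_{\sblb h\sbrb}=h_{\sbla 1/h\sbra}$ on $\sn$, and \eqref{polar-identity} identifies the left side as $h_{\sblb h\sbrb^*}$; equality of support functions gives equality of the bodies. This is exactly the paper's proof, and it is shorter than either of your routes. I would recommend either adopting it outright or, if you prefer the two-inclusion structure, keeping your first paragraph and replacing everything after it with the corrected separation argument above.
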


\begin{proof} Let $\rho=1/h$.
The radial function $\rho_{\sblb h \sbrb}$ of $\blb h \brb$ is given by
\begin{align*}
\rho_{\sblb h \sbrb}(u) &=\max\{r>0 : ru \in \blb h \brb\} \\
&=\max\{r>0 : \text{$ru\cdot v \le h(v)$ for all $v\in \Omega$}\} \\
&=\max\{r>0 : r \max\nolimits_{v\in\Omega} (u\cdot v)/h(v) \le 1 \} \\
&=1/\big(\max\nolimits_{v\in\Omega} (u\cdot v)/h(v)\big),
\end{align*}
 for each $u\in \sn$.
Thus, since $\rho = 1/h$, we see that for $u\in \sn$,
\[
1/\rho_{\sblb h \sbrb}(u) =\max\nolimits_{v\in\Omega} (u\cdot v)/h(v) = \max\nolimits_{v\in\Omega} (u\cdot v) \rho(v) = h_{\sbla \rho \sbra}(u),
\]
by \eqref{support-convex-hull}. This and \eqref{polar-identity} give the desired identity.
\end{proof}

We recall Aleksandrov's convergence theorem for Wulff shapes (see Schneider
\cite{S14}, p. 412): If a sequence of continuous $h_i : \Omega \to (0,\infty)$ converges uniformly to $h : \Omega \to (0, \infty)$, then
the sequence of Wulff shapes $\blb h_i \brb$ converges to the Wulff shape $\blb h \brb$.

We will use the following convergence of convex hulls: If a sequence of positive continuous functions
$\rho_i : \Omega \to (0,\infty)$ converges uniformly to $\rho : \Omega \to (0, \infty)$, then
the sequence of convex hulls $\bla \rho_i \bra$ converges to the convex hull $\bla \rho \bra$.
Lemma \ref{wulff-hull}, together with Aleksandrov's convergence theorem for Wulff shapes, provides a quick proof.

Let $f: \Omega \to \rbo$ be continuous, and $\delta>0$.
Let $h_t : \Omega \to (0, \infty)$ be a continuous function defined for each $t\in(-\delta,\delta)$ by
\begin{equation}\label{hst}
\log h_t(v)= \log h(v) + t f(v) + o(t,v),
\end{equation}
where $o(t,v)$ is continuous in $v\in \Omega$ and $\lim_{t\to 0} o(t,v)/t = 0$, uniformly with respect to  $v\in \Omega$.
Denote by
\[
\blb h_t \brb = \{x\in\rn : \text{$x\cdot v \le h_t(v)$ for all $v\in \Omega$}\},
\]
the Wulff shape determined $h_t$. We shall call $\blb h_t \brb$
a {\it logarithmic family of Wulff shapes formed by $(h,f)$}. On occasion, we shall write $\blb h_t \brb$ as $\blb h,f,t\brb$, and if $h$ happens to be the support function of a convex body $K$ perhaps as $\blb K,f,t\brb$, or as $\blb K,f,o,t\brb$, if required for clarity.

\smallskip

Let $g: \Omega \to \rbo$ be continuous and $\delta>0$.
Let $\rho_t : \Omega \to (0,\infty)$ be a continuous function defined for each $t\in(-\delta,\delta)$ by
\begin{equation}\label{rst}
\log \rho_t(u)= \log \rho(u) + t g(u) + o(t,u),
\end{equation}
where again $o(t,u)$ is continuous in $u\in \Omega$ and $\lim_{t\to 0} o(t,u)/t = 0$, uniformly with respect to  $u\in \Omega$.
Denote by
\[
\bla \rho_{t} \bra = \conv \{\rho_{t}(u)u : u\in\sn \}
\]
the convex hull generated by $\rho_{t}$. We will call $\bla \rho_{t} \bra$
a {\it logarithmic family of convex hulls generated by $(\rho,g)$}.
On occasion, we shall write $\bla \rho_{t} \bra$ as $\bla \rho,g,t\bra$, and if $\rho$ happens to be the radial function of a convex body $K$ as $\bla K,g,t\bra$, or as $\bla K,g,o,t\bra$, if required for clarity.

\subsection{Two integral identities}

\begin{lemm}
Suppose $K\in \kno$ and $q\in \mathbb R$. Then for each bounded Lebesgue integrable $f: \sn \to \rbo$,
\begin{equation}\label{u-x-formula}
\int_{\sn} f(u) \rho_K(u)^q\, du
= \int_{\partial'\negthinspace K} f(\bar x )|x|^{q-n}\, x\cdot \nu_K(x)\, d\mathcal H^{n-1}(x).
\end{equation}
\end{lemm}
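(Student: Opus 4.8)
The plan is to realize both sides of \eqref{u-x-formula} as a single integral over $\partial K$, transported there from $\sn$ by the \emph{area formula} applied to the radial map $r_K\colon\sn\to\partial K$, $r_K(u)=\rho_K(u)u$. Since $K\in\kno$ we have $\rho_K=1/h_{K^*}$, so $\rho_K$ is Lipschitz on $\sn$ and bounded between two positive constants; hence $r_K$ is a bi-Lipschitz homeomorphism of $\sn$ onto $\partial K$, and by Rademacher's theorem $\rho_K$ is differentiable at $\hm$-almost every $u\in\sn$. Off the $\hm$-null set consisting of $\omega_K$ together with the non-differentiability points of $\rho_K$, the map $r_K$ is differentiable and $\nu_K(r_K(u))=\alpha_K(u)$ is the unique outer unit normal of $\partial K$ at $r_K(u)$.

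Two pointwise identities, valid at a.e.\ $u\in\sn$, drive the argument. Writing $x=r_K(u)$, letting $\nabla\rho_K(u)\in T_u\sn$ be the spherical gradient of $\rho_K$, and setting $s(u)=\sqrt{\rho_K(u)^2+|\nabla\rho_K(u)|^2}$, I claim
\[
J r_K(u)=\rho_K(u)^{\,n-2}\,s(u),\qquad x\cdot\nu_K(x)=\frac{\rho_K(u)^2}{s(u)},
\]
where $J r_K$ is the $(n-1)$-dimensional Jacobian of $r_K$. Both come from one short computation: in a local parametrization of $\sn$ at $u$ whose coordinate vectors reduce at the point $u$ to an orthonormal basis $e_1,\dots,e_{n-1}$ of $T_u\sn$, one has $\partial_i r_K=(\partial_i\rho_K)\,u+\rho_K(u)\,e_i$; these $n-1$ vectors span the tangent space to $\partial K$ at $x$, their Gram matrix is $\rho_K(u)^2\delta_{ij}+\partial_i\rho_K(u)\,\partial_j\rho_K(u)$ (whose determinant is $\rho_K(u)^{\,2(n-2)}s(u)^2$ by the matrix determinant lemma), and the corresponding outer unit normal is $(\rho_K(u)u-\nabla\rho_K(u))/s(u)$, which must equal $\nu_K(x)$; dotting the latter with $x=\rho_K(u)u$ and using $u\cdot\nabla\rho_K(u)=0$ gives the second identity.

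Now apply the area formula to the Lipschitz map $r_K$ with the bounded measurable integrand $G(x)=f(\bar x)\,|x|^{q-n}\,(x\cdot\nu_K(x))$ on $\partial K$ (the measurability of $f\circ\bar{\,\cdot\,}$ follows from $\bar{\,\cdot\,}\colon\partial K\to\sn$ being bi-Lipschitz):
\[
\int_{\partial K}G(x)\,d\hm(x)=\int_{\sn}G(r_K(u))\,J r_K(u)\,du.
\]
Since $\bar x=u$ and $|x|=\rho_K(u)$ at $x=r_K(u)$, the integrand on the right is
\[
f(u)\,\rho_K(u)^{q-n}\cdot\frac{\rho_K(u)^2}{s(u)}\cdot\rho_K(u)^{\,n-2}s(u)=f(u)\,\rho_K(u)^q,
\]
and, because $\partial K$ and $\partial'\negthinspace K$ differ only by the $\hm$-null set $\sigma_K$, this is precisely \eqref{u-x-formula}.

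The main obstacle is the pair of differential-geometric identities in the second paragraph — the value of the Jacobian $J r_K$ and of $x\cdot\nu_K(x)$ in terms of $\rho_K$ and its spherical gradient — together with the routine but unavoidable care needed for the various Lebesgue- and Hausdorff-null sets on which $\rho_K$ fails to be differentiable or $\nu_K$ fails to be single-valued. A variant avoids the explicit Jacobian: one first treats $q=n$, where taking $f=\mathbf 1_\eta$ reduces the claim to the statement that the cone $\{tu:u\in\eta,\ 0\le t\le\rho_K(u)\}$ has volume $\tfrac1n\int_\eta\rho_K(u)^n\,du=\tfrac1n\int_{r_K(\eta)}x\cdot\nu_K(x)\,d\hm(x)$ (polar coordinates for the first equality, then the divergence theorem applied to the vector field $x\mapsto x$, using $x\cdot\nu=0$ on the lateral boundary of the cone), and then deduces general $q$ from the identity $|x|=\rho_K(\bar x)$ on $\partial K$.
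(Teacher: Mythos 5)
Your main argument is correct, and it takes a genuinely different route from the paper. The paper first reduces to $q=n$ (by replacing $f$ with $f\rho_K^{q-n}$, exactly as you note at the end), then proves the $q=n$ case for $C^1$ functions by extending $f(\bar x)$ to a $0$-homogeneous $F$ with $\operatorname{div}(F(x)x)=nF(x)$, applying the divergence theorem for sets of finite perimeter to $K\setminus B_\delta$, and converting $n\int_K F$ to $\int_{\sn}F(u)\rho_K^n(u)\,du$ by polar coordinates; it then passes from $C^1$ to continuous $f$ by uniform approximation, and from continuous to bounded integrable $f$ via Lusin and Tietze together with the domination estimates \eqref{zxc}. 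Your primary route instead transports everything through the area formula for the bi-Lipschitz radial map, using the two pointwise identities $Jr_K=\rho_K^{n-2}\sqrt{\rho_K^2+|\nabla\rho_K|^2}$ and $x\cdot\nu_K(x)=\rho_K^2/\sqrt{\rho_K^2+|\nabla\rho_K|^2}$, whose product collapses to $\rho_K^{n}$; both identities check out (the Gram determinant via the rank-one update, and the normal $N=\rho_K u-\nabla\rho_K$ annihilating the $\partial_i r_K$ and satisfying $N\cdot x=\rho_K^2>0$). What your approach buys is that all $q$ and all bounded measurable $f$ are handled in one stroke, with no approximation argument; what it costs is the Rademacher/area-formula machinery and the (standard but not free) identification of $\nu_K(r_K(u))$ with $N/|N|$ at every differentiability point of $\rho_K$ — the one place where you should be slightly more explicit, since differentiability of $\rho_K$ at $u$ and uniqueness of the normal at $r_K(u)$ are a priori two different full-measure conditions whose intersection is what you actually work on. Your closing ``variant'' (cone over $\eta$, divergence theorem with the field $x\mapsto x$, then general $q$ via $|x|=\rho_K(\bar x)$) is essentially the paper's argument in indicator-function form.
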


\begin{proof}
We only need to establish,
\begin{equation}\label{u-x-formula-1}
\int_{\sn} f(u) \rho_K(u)^n \, du
= \int_{\partial'\negthinspace K} f(\bar x )\, x\cdot \nu_K(x)\, d\mathcal H^{n-1}(x),
\end{equation}
because replacing $f$ with $f\rho_K^{q-n}$ in \eqref{u-x-formula-1} gives \eqref{u-x-formula}.

We begin by establishing \eqref{u-x-formula-1} for $C^1$-functions $f$.
To that end, define  $F:  \rn \setminus \{0\} \to \rbo$ by letting $F(x) = f(\bar x )$ for $x\neq 0$. Thus $F(x)$ is a $C^1$ homogeneous function of degree 0 in $\rn\setminus \{0\}$. The homogeneity of $F$ implies that
$x\cdot \nabla F(x)= 0$, and thus we have, $\text{div}(F(x)x) = n F(x)$.

Let $B_\delta\subset K$ be the ball of radius $\delta>0$ and centered at the origin.
Apply the divergence theorem
for sets of finite perimeter (see \cite{EG}, Section 5.8, Theorem 1) to ${K\setminus B_\delta} $, and get
\begin{align*}
n \int_{K\setminus B_\delta} F(x)\, dx &=
\int_{\partial'\negthinspace K} F(x)\, x\cdot \nu_K(x)\, d\mathcal H^{n-1}(x)
- \int_{\partial B_\delta} F(x)\, x\cdot \nu_{B_\delta}(x)\, d\mathcal H^{n-1}(x) \\
&= \int_{\partial'\negthinspace K} F(x)\, x\cdot \nu_K(x)\, d\mathcal H^{n-1}(x)
- \delta^n \int_{\sn} F(u)\, du,
\end{align*}
and hence,
\[
\int_{\partial'\negthinspace K} F(x)\, x\cdot \nu_K(x)\, d\mathcal H^{n-1}(x)
=n \int_{K} F(x)\, dx.
\]
Switching to polar coordinates give
\begin{align*}
\int_{K} F(x)\, dx
&=  \int_{\sn} \int_0^{\rho_K(u)} F(ru) r^{n-1} dr\,du\\
&=\frac1n \int_{\sn} F(u) \rho_K^n (u)\, du,
\end{align*}
which establishes \eqref{u-x-formula-1} for $C^1$-functions.

Since every continuous function on $\sn$ can be uniformly approximated by $C^1$ functions,
\eqref{u-x-formula-1} holds whenever $f$ is continuous.

Define the measure $\wt S_n$ on $\sn$ by
\begin{equation*}
\wt S_n(\omega) = \frac1n \int_{\omega} \rho_K^n (u)\, du.
\end{equation*}
for each Lebesgue measurable set $\omega\subset\sn$, and define the measure $V_{\partial K}$ on ${\partial'\negthinspace K}$ by
\begin{equation*}
V_{\partial K}(\sigma) =  \int_{\sigma} x\cdot \nu_K(x)\, d\mathcal H^{n-1}(x),
\end{equation*}
for each $\hm$-measurable $\sigma\subset{\partial'\negthinspace K}$.

It is easily seen that there exists $m_0, m_1, m_2>0$ such that
\begin{equation}\label{zxc}
\text{$\hm(r_K(\omega)) \le m_0\, \hm(\omega)$, $V_{\partial K}(\sigma) \le m_1 \, \hm(\sigma)$, and $\wt S_n(\omega) \le m_2\, \hm(\omega)$,}
\end{equation}
for every Lebesgue measurable set $\omega$ and $\hm$-measurable $\sigma$.

Suppose $f:\sn \to \rbo$ is a bounded integrable function; say $|f(u)|\le m$, for all $u\in\sn$.
Lusin's theorem followed by the Tietze's extension theorem, guarantees the existence of an open
subset $\omega_j \subset \sn$ and a continuous function $f_j:\sn \to \rbo$ so that $\mathcal H^{n-1}(\omega_j)<\frac1j$, while
$f=f_j$ on $\sn \setminus \omega_j$, with
$|f_j(u)|\le m$, for all $u\in\sn$.

Observe that
\[
\Big|\int_{\sn}  \negthinspace \negthinspace \negthinspace   (f(u) - f_j(u)) \rho_K^n(u)\, du\Big|
\le \Big|\int_{\sn \setminus \omega_j}  \negthinspace \negthinspace \negthinspace \negthinspace \negthinspace \negthinspace   (f(u) - f_j(u)) \rho_K^n(u)\, du\Big| + 2mn \wt S_n(\omega_j),
\]
where the integral on the right is $0$, and that
\begin{align*}
\Big|\int_{\partial'\negthinspace K} (f(\bar x) - f_j(\bar x))\, & x\cdot \nu_K(x)\, d\hm(x)\Big|\\
&\le \Big|\int_{\partial'\negthinspace K\setminus r_K(\omega_j)}  \negthinspace \negthinspace \negthinspace \negthinspace \negthinspace \negthinspace \negthinspace \negthinspace \negthinspace   (f(\bar x) - f_j(\bar x))\, x\cdot \nu_K(x) \, d\hm(x)\Big| + 2m V_{\partial K}(r_K(\omega_j)),
\end{align*}
where the integral on the right is $0$.

In light of \eqref{zxc}, the above allows us to establish \eqref{u-x-formula-1} for the bounded integrable function $f$, given that we had established \eqref{u-x-formula-1} for the continuous functions $f_j$.
\end{proof}

\begin{lemm}
Suppose that $K\in \kno$ is strictly convex, and $f:\sn\to \rr$, and $F:\partial K\to \rr$ are continuous, then
\begin{equation}\label{boundary-sphere}
\int_{\sn} f(v) F(\nabla h_K(v)) h_K(v)\, dS(K,v)
= \int_{\partial'\negthinspace K} x\cdot \nu_K(x)\,f(\nu_K(x)) F(x)  \,  d\hm(x),
\end{equation}
where the integral on the left is with respect to the surface area measure of $K$.
\end{lemm}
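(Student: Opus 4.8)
The plan is to deduce \eqref{boundary-sphere} from the change-of-variables identity \eqref{s-a-m-1} by exhibiting both sides as coming from a single continuous function on $\sn$ transported through the spherical image map. First I would record what strict convexity buys us. Since $K$ is strictly convex, $\partial K$ contains no segment, so every supporting hyperplane $H_K(v)$ meets $\partial K$ in exactly one point; equivalently $\eta_K=\varnothing$, so the reverse spherical image map $x_K:\sn\to\partial K$ (see \eqref{rsim}) is defined on all of $\sn$ and is continuous. Strict convexity also makes $h_K$ differentiable on $\rn\setminus\{0\}$, with $\nabla h_K(v)=x_K(v)$ for $v\in\sn$; this is the classical fact that the support function of a strictly convex body is of class $C^1$ away from the origin (see \cite{S14}). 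In particular $v\mapsto F(\nabla h_K(v))$ is continuous on $\sn$, so
\[
g:=f\cdot\big(F\circ\nabla h_K\big)\cdot h_K
\]
is a continuous function on $\sn$.

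Next I would apply \eqref{s-a-m-1} to this $g$. Since $\nu_K$ is continuous on $\partial'\negthinspace K$ and $g$ is continuous, $g\circ\nu_K$ is $\hm$-measurable on $\partial'\negthinspace K$, and \eqref{s-a-m-1} gives
\[
\int_{\sn} f(v)\, F(\nabla h_K(v))\, h_K(v)\, dS(K,v)
=\int_{\partial'\negthinspace K} f(\nu_K(x))\, F\big(\nabla h_K(\nu_K(x))\big)\, h_K(\nu_K(x))\, d\hm(x).
\]

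Finally I would identify the boundary integrand pointwise. Fix $x\in\partial'\negthinspace K$ and set $v=\nu_K(x)$. From $x\in H_K(v)$ we get $h_K(v)=x\cdot v=x\cdot\nu_K(x)$. Moreover $x\in\partial K\cap H_K(v)$, and by strict convexity this intersection is the single point $x_K(v)=\nabla h_K(v)$; hence $x=\nabla h_K(\nu_K(x))$ and therefore $F(\nabla h_K(\nu_K(x)))=F(x)$. Substituting these two identities into the right-hand side of the displayed equation turns it into $\int_{\partial'\negthinspace K}(x\cdot\nu_K(x))\,f(\nu_K(x))\,F(x)\,d\hm(x)$, which is exactly \eqref{boundary-sphere}. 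None of these steps is long; the one point that genuinely requires care is the identification $\nabla h_K=x_K$ on $\sn$ together with its continuity --- i.e.\ the $C^1$-smoothness of the support function of a strictly convex body --- since that is what lets us form the continuous function $g$ and legitimately invoke \eqref{s-a-m-1}. Everything else is bookkeeping.
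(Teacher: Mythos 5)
Your proposal is correct and follows essentially the same route as the paper: both arguments rest on the identity $h_K(\nu_K(x))=x\cdot\nu_K(x)$, the fact that $\nabla h_K\circ\nu_K$ is the identity on $\partial'\negthinspace K$ (equivalently $\nabla h_K=x_K$, with continuity coming from strict convexity), and then an application of \eqref{s-a-m-1} to the continuous function $f\cdot(F\circ\nabla h_K)\cdot h_K$. The only difference is the direction in which the substitution is read, which is immaterial.
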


\begin{proof}
First observe that from the definition of the support function $h_K$ and the definition of $\nu_K$, it follows immediately that for $x\in \partial'\negthinspace K$,
\begin{equation}\label{yy1}
h_K(\nu_K(x) ) = x\cdot \nu_K(x).
\end{equation}

The assumption that $K$ is strictly convex implies that  $\nabla h_K$ always exists. But a convex function that's differentiable must be continuously differentiable and hence $\nabla h_K$ is continuous
on $\sn$. We shall use the fact that
\begin{equation}\label{yy2}
\text{$\nabla h_K \circ \nu_K : \partial'\negthinspace K \to \partial'\negthinspace K$\qquad\quad
is the identity map.}
\end{equation}
(See e.g., Schneider \cite{S14}, p.\ 47).

Now, from \eqref{yy1}, \eqref{yy2}, and \eqref{s-a-m-1}, we have
\begin{align}
\int_{\partial'\negthinspace K} x\cdot \nu_K(x)\,&f(\nu_K(x)) F(x)  \,  d\hm(x)\\
&=
\int_{\partial'\negthinspace K} h_K( \nu_K(x)) f(\nu_K(x)) F(\nabla h_K(\nu_K(x))  \,  d\hm(x)\\
&=
\int_{\sn} f(v) F(\nabla h_K(v)) h_K(v)\, dS(K,v).
\end{align}
\end{proof}

\section{Dual curvature measures}

To display the conceptual duality between the Brunn-Minkowski theory and
the dual Brunn-Minkowski theory, we first follow Schneider \cite{S14}, Chapter 4,
to briefly develop the classical area and curvature measures
for convex bodies in the Brunn-Minkowski theory. Then we introduce two new families of geometric measures: the dual curvature and dual area measures, in the dual Brunn-Minkowski theory.
While curvature and area measures can be viewed as differentials of the quermassintegrals,
dual curvature and dual area measures are viewed as differentials of the dual quermassintegrals.

\subsection{Curvature and area measures}

Let $K$ be a convex body in $\kno$.
For $x\notin K$, denote by $d(K,x)$ the distance from $x$ to $K$.
Define the metric projection map $p_K : \rn \setminus K \to \partial K$ so that
$p_K(x) \in \partial K$ is the unique point  satisfying
\[
d(K,x) = |x-p_K(x)|.
\]
Denote by $v_K:\rn \setminus K \to \sn$ the {\it outer unit normal vector of $\partial K$ at $p_K(x)$},
defined by
\[
v_K(x) = \frac{x - p_K(x)}{d(K,x)},
\]
for $x\in \rn \setminus K$.

For $t > 0$, and Borel sets $\omega\subset \sn$ and $\eta \subset \sn$, let
\begin{align*}
A_t(K, \omega) &= \Big\{x\in \rn : 0< d(K,x) \le t, \  p_K(x) \in r_K(\omega)\Big\},\\
B_t(K, \eta) &= \Big\{x\in \rn : 0< d(K,x) \le t, \  v_K(x) \in \eta\Big\},
\end{align*}
which are the so-called local parallel bodies of $K$.
There are Steiner-type formulas,
\begin{align*}
V(A_t(K,\omega)) &= \frac1n \sum_{i=0}^{n-1} \binom ni t^{n-i} C_i(K,\omega),\\
V(B_t(K,\eta)) &= \frac1n \sum_{i=0}^{n-1} \binom ni t^{n-i}  S_i(K,\eta),
\end{align*}
where $C_i(K,\cdot)$ is a Borel measure on $\sn$, called the {\it $i$-th curvature measure} of $K$,
and $S_i(K,\cdot)$ is a Borel measure on $\sn$, called the {\it $i$-th area measure} of $K$. For all this, see Schneider \cite{S14}, \S4.2.

Note that the classical curvature measures are defined on the boundary $\partial K$, which are
the image measures of $C_i(K,\cdot)$ under the radial map $r_K: \sn \to \partial K$.
Since, for $K\in \kno$, the radial map $r_K$ induces a strong equivalence between $\sn$ and
$\partial K$, one can always define the curvature measures equivalently on either of the two spaces.

The $(n-1)$-th area measure $S_{n-1}(K,\cdot)$ is the usual surface area measure $S(K,\cdot)$
which can be defined, for Borel $\eta\subset\sn$, directly by
\begin{equation}\label{temp1}
S_{n-1}(K, \eta) = \mathcal H^{n-1}(\bx_K(\eta)).
\end{equation}
The $(n - 1)$-th curvature measure $C_{n-1}(K, \cdot)$ on $\sn$ can be defined, for Borel $\omega\subset\sn$, by
\begin{equation}\label{temp2}
C_{n-1}(K, \omega) = \mathcal H^{n-1}(r_K(\omega)).
\end{equation}
From \eqref{temp1} and \eqref{temp2}, and the fact that $\balpha^*_K = r_K^{-1}\circ \bx_K$, we see that the $(n - 1)$-th curvature measure $C_{n-1}(K, \cdot)$ on $\sn$ and the $(n-1)$-th area measure
$S_{n-1}(K,\cdot)$ on $\sn$, are related by
\begin{equation}\label{temp3}
C_{n-1}(K,\balpha^*_K(\eta)) = S_{n-1}(K, \eta),
\end{equation}
for each Borel $\eta\subset\sn$.
See Schneider \cite{S14}, Theorem 4.2.3.

The 0-th area measure $S_0(K,\cdot)$ is just spherical Lebesgue measure on $\sn$; i.e.,
\[
S_0(K,\eta) = \mathcal H^{n-1}(\eta),
\]
for Borel $\eta\subset\sn$.
The 0-th curvature measure $C_0(K, \cdot)$ on $\sn$ can be defined, for Borel $\omega \subset \sn$,  by
\begin{equation}\label{i-curv-meas}
C_0(K, \omega) = \mathcal H^{n-1}(\balpha_K(\omega));
\end{equation}
that is, $C_0(K,\omega)$ is the spherical Lebesgue measure of $\balpha_K(\omega)$.
The 0-th curvature measure is also called the {\it integral curvature of $K$} which was first
defined by Aleksandrov. Obviously, \eqref{i-curv-meas} can be written as
\begin{equation}\label{0-cur-area-f}
C_0(K, \omega) = S_0(K, \balpha_K(\omega)).
\end{equation}
(See Schneider \cite{S14}, Theorem 4.2.3.)
If $K\in \kno$ happens to be strictly convex, then \eqref{0-cur-area-f} can be extended to
\begin{equation}\label{i-cur-area-f}
C_i(K, \omega) = S_i(K, \balpha_K(\omega)), \quad i=0, 1, \dots, n-1.
\end{equation}
(See Schneider \cite{S14}, Theorem 4.2.5.)

\subsection{Definition of dual curvature and dual area measures}
We first define the dual notions of the metric projection map $p_K$
and the distance function $d(K,\cdot)$.
Suppose $K\in\kno$.
Define the {\it radial projection map} $\wt p_K : \rn \setminus K \to \partial K$ by
\[
\wt p_K(x) = \rho_K(x)x=r_K(\xoverline{x}),
\]
for $x \in \rn \setminus K$. For $x \in \rn$, the {\it radial distance} $\wt d(K,x)$ of $x$ to $K$, is defined by
\[
\wt d(K,x) = \begin{cases} |x-\wt p_K(x)|  & x\notin K \\
0 &x\in K.
\end{cases}
\]
Let
\[
\wt v_K(x) = \xoverline{x}.
\]

For $t \ge 0$, a Lebesgue measurable set $\omega \subset \sn$,  and a Borel set $\eta \subset \sn$, define
\begin{align}
\wt A_t(K,\eta) &=\{x\in \rn : \text{ $0\le \wt d(K,x) \le t$ with $\wt p_K(x) \in \bx_K(\eta)$}\},
\label{d-cur-set} \\
\wt B_t(K, \omega) &= \{x\in \rn : \text{$0\le \wt d(K,x) \le t$ with $\wt v_K(x) \in \omega$}\},
\label{d-area-set}
\end{align}
to be the {\it local dual parallel bodies}. These local dual parallel bodies also have Steiner type
formulas as shown in the following theorem.

\begin{theo}\label{thm3.1}
Suppose $K\in \kno$. For $t \ge 0$, a Lebesgue measurable set $\omega \subset \sn$,  and a Borel set $\eta \subset \sn$,
\begin{align}
V(\wt A_t(K,\eta))&= \sum_{i=0}^{n} \binom ni t^{n-i} \wt C_i(K,\eta), \label{d-cur-S-f}\\
V(\wt B_t(K, \omega)) &= \sum_{i=0}^{n} \binom ni t^{n-i} \wt S_i(K,\omega), \label{d-area-S-f}
\end{align}
where $\wt C_i(K, \cdot)$ and $\wt S_i(K,\cdot)$ are  Borel
measures on $\sn$ given by
\begin{align}
\wt C_i(K, \eta)  &= \frac1n \int_{\balpha^*_K(\eta)} \rho_K^i(u)\, du, \label{d-cur-i} \\
\wt S_i(K,\omega) &= \frac1n \int_\omega \rho_K^i(u)\, du. \label{why-use-this}
\end{align}
\end{theo}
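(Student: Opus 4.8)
The plan is to reduce the statement for $\wt A_t$ to the statement for $\wt B_t$, prove the latter by an elementary polar-coordinate computation, and then verify that the resulting set functions are Borel measures.

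First I would observe that, up to the single point at the origin, $\wt A_t(K,\eta) = \wt B_t(K, \balpha^*_K(\eta))$. Indeed, for $x \ne 0$ one has $\wt v_K(x) = \xoverline{x}$ and $\wt p_K(x) = r_K(\xoverline{x})$ (this is the evident extension of $\wt p_K$ to $\rn\setminus\{0\}$, consistent with the formula $\wt p_K(x)=\rho_K(x)x=r_K(\xoverline{x})$ used in the definition), so by \eqref{alpha-star} the condition $\wt p_K(x) \in \bx_K(\eta)$ is equivalent to $\xoverline{x} = \wt v_K(x) \in r_K^{-1}(\bx_K(\eta)) = \balpha^*_K(\eta)$. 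Since $\balpha^*_K(\eta)$ is Lebesgue measurable by Lemma \ref{Lebesgue}, it suffices to prove \eqref{d-area-S-f} for an arbitrary Lebesgue measurable $\omega \subset \sn$; the formula \eqref{d-cur-S-f} then follows by substituting $\omega = \balpha^*_K(\eta)$, which simultaneously identifies $\wt C_i(K,\eta)$ with the right-hand side of \eqref{d-cur-i}.

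Next I would compute $V(\wt B_t(K,\omega))$ directly. Using $\wt v_K(x) = \xoverline{x}$ together with the fact that for $x\notin K$ the points $x$ and $\wt p_K(x)=r_K(\xoverline{x})$ lie on the same ray from the origin, so that $\wt d(K,x) = |x| - \rho_K(\xoverline{x})$, while $\wt d(K,x)=0$ for $x\in K$, one checks that (modulo the origin) $x \in \wt B_t(K,\omega)$ if and only if $\xoverline{x} \in \omega$ and $|x| \le \rho_K(\xoverline{x}) + t$. This description also shows $\wt B_t(K,\omega)$ is Lebesgue measurable, since $\omega$ is measurable and $\rho_K$ continuous. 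Passing to polar coordinates,
\[
V(\wt B_t(K,\omega)) = \int_{\omega} \int_0^{\rho_K(u) + t} r^{n-1}\, dr\, du = \frac1n \int_{\omega} (\rho_K(u) + t)^n\, du,
\]
and expanding $(\rho_K(u)+t)^n$ by the binomial theorem and interchanging the finite sum with the integral gives
\[
V(\wt B_t(K,\omega)) = \sum_{i=0}^{n} \binom ni t^{n-i} \left( \frac1n \int_{\omega} \rho_K^i(u)\, du \right) = \sum_{i=0}^{n} \binom ni t^{n-i} \wt S_i(K,\omega),
\]
which is \eqref{d-area-S-f}; taking $\omega = \balpha^*_K(\eta)$ yields \eqref{d-cur-S-f}.

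Finally, I would check that $\wt C_i(K,\cdot)$ and $\wt S_i(K,\cdot)$ are finite Borel measures on $\sn$. For $\wt S_i$ this is immediate, since $\rho_K^i$ is bounded and continuous, so $\omega \mapsto \frac1n\int_\omega \rho_K^i\, du$ is countably additive and finite. For $\wt C_i$, given pairwise disjoint Borel sets $\eta_j$ with union $\eta$, Lemma \ref{abc} gives $\balpha^*_K(\eta) = \bigcup_j \balpha^*_K(\eta_j)$, Lemma \ref{hij} gives that the sets $\balpha^*_K(\eta_j)\setminus\omega_K$ are pairwise disjoint, and $\omega_K$ has spherical Lebesgue measure $0$; combined with Lemma \ref{Lebesgue} this yields $\wt C_i(K,\eta) = \sum_j \wt C_i(K,\eta_j)$, while finiteness is clear from $\rho_K$ being bounded. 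The only real subtlety in the whole argument is this bookkeeping with the null sets ($\{0\}\subset\rn$ and $\omega_K\subset\sn$) and with the measurability of the local dual parallel bodies, all of which is dispatched by the lemmas already in place; the analytic content reduces to the one-variable binomial expansion inside the polar-coordinate integral.
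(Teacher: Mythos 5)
Your proposal is correct and follows essentially the same route as the paper's proof: the polar-coordinate computation of $V(\wt B_t(K,\omega))$ with the binomial expansion, the identification $\wt A_t(K,\eta)=\wt B_t(K,\balpha^*_K(\eta))$ via \eqref{alpha-star} and Lemma \ref{Lebesgue}, and the countable-additivity check for $\wt C_i(K,\cdot)$ using Lemmas \ref{abc} and \ref{hij} together with the nullity of $\omega_K$. The only difference is cosmetic (you reduce $\wt A_t$ to $\wt B_t$ before computing rather than after), so no further comment is needed.
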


\begin{proof}
Write \eqref{d-area-set} as
\begin{equation}\label{bttilde}
\wt B_t(K, \omega) = \{x\in \rn : \text{$0\le |x| \le \rho_K(\xoverline{x}) + t$ with
$\xoverline{x} \in \omega$ }\}.
\end{equation}
Write $x=\rho u$, with $\rho\ge 0$ and $u\in \sn$, and
\begin{align*}
V(\wt B_t(K, \omega)) &= \int_{u\in \omega} \left(\int_0^{\rho_K(u)+t} \rho^{n-1}\, d\rho\right) du \\
&=\frac1n \int_{u\in \omega} (\rho_K(u) +t)^n \, du \\
&=\frac1n \sum_{i=0}^n \binom ni t^{n-i} \int_\omega \rho_K^i(u) \, du.
\end{align*}
This gives \eqref{d-area-S-f} and \eqref{why-use-this}.

In \eqref{d-cur-set}, the condition that $\wt p_K(x) \in \bx_K(\eta)$, or equivalently
$r_K(\xoverline{x}) \in \bx_K(\eta)$, is by \eqref{alpha-star}, the
same as $\xoverline{x} \in r^{-1}_K(\bx_K(\eta)) =\balpha^*_K(\eta)$.
Thus, \eqref{d-cur-set} can be written as
\begin{equation}\label{attilde}
\wt A_t(K,\eta) =\{ x\in \rn : \text{$0\le |x| \le \rho_K (\xoverline{x})  + t$ with
 $\xoverline{x} \in \balpha^*_K(\eta)$ }\}.
\end{equation}

Since $\eta \subset \sn$ is a Borel set, $\balpha^*_K(\eta)$ is a Lebesgue measurable subset of $\sn$ by Lemma \ref{Lebesgue}, and a glance at \eqref{bttilde} and \eqref{attilde} immediately gives
\[
\wt A_t(K,\eta) = \wt B_t(K, \balpha^*_K(\eta)).
\]
Now \eqref{d-area-S-f} yields,
\begin{align*}
V(\wt A_t(K,\eta))&= V(\wt B_t(K, \balpha^*_K(\eta))) \\
&= \sum_{i=0}^{n} \binom ni t^{n-i} \wt S_i(K, \balpha^*_K(\eta)),
\end{align*}
and by defining
\begin{equation}\label{def-c-t}
\wt C_i(K,\eta) = \wt S_i(K, \balpha^*_K(\eta)).
\end{equation}
we get both \eqref{d-cur-S-f} and \eqref{d-cur-i}.

Obviously, $\wt S_i(K,\cdot)$ is a Borel measure.
Note that since the integration in the integral representation of $\wt S_i(K,\cdot)$ is with respect to spherical Lebesgue measure, the measure $\wt S_i(K,\cdot)$ will assume the same value on sets that differ by a set of spherical Lebesgue measure $0$.

We now show that $\wt C_i(K,\cdot)$ is
a Borel measure as well. For the empty set $\varnothing$,
\[
\wt C_i(K, \varnothing)= \wt S_i(K, \balpha^*_K(\varnothing)) = \wt S_i(K, \varnothing)=0.
\]
Let $\{\eta_j\}$
be a sequence of pairwise disjoint Borel sets in $\sn$. From Lemma \ref{Lebesgue} and Lemma \ref{hij}, together with the fact that $\omega_K$ has spherical Lebesgue measure $0$,
we know that
$\{\balpha^*_K(\eta_j) \setminus \omega_K\}$ is
a sequence of pairwise disjoint Lebesgue measurable sets.
From Lemma \ref{abc}, the fact that $\omega_K$ has measure $0$, and the fact that the $\{\balpha^*_K(\eta_j)\setminus \omega_K\}$ are pairwise disjoint, together with \eqref{def-c-t}, we have
\begin{align*}
\wt C_i(K,\textstyle\bigcup_j \eta_j)
&= \wt S_i(K,\ \balpha^*_K(\textstyle\bigcup_j \eta_j)) \\
&=\wt S_i(K,\ \textstyle\bigcup_j \balpha^*_K(\eta_j)) \\
&=\wt S_i(K,\ (\textstyle\bigcup_j \balpha^*_K(\eta_j)) \setminus \omega_K) \\
&=\wt S_i(K,\ \textstyle\bigcup_j (\balpha^*_K(\eta_j)\setminus \omega_K)) \\
&=\textstyle\sum_j \wt S_i(K, \balpha^*_K(\eta_j)\setminus \omega_K) \\
&=\textstyle\sum_j \wt S_i(K, \balpha^*_K(\eta_j)) \\
&=\textstyle\sum_j \wt C_i(K, \eta_j).
\end{align*}
This shows that $\wt C_i(K, \cdot)$ is a Borel measure.
\end{proof}

We call the measure $\wt S_i(K,\cdot)$ the {\it $i$-th dual area measure of $K$}
and the measure $\wt C_i(K,\cdot)$ the {\it $i$-th dual curvature measure of $K$}.
From \eqref{d-quer-i}, \eqref{d-cur-i} and \eqref{why-use-this}, we see that the total measures
 of the $i$-th dual area measure and the $i$-th dual curvature measure
 are the $(n-i)$-th dual quermassintegral $\wt W_{n-i}(K)$; i.e.,
\begin{equation}\label{d-total-m}
\wt W_{n-i}(K)= \wt S_i(K,\sn) = \wt C_i(K,\sn).
\end{equation}

The integral representations \eqref{d-cur-i} and \eqref{why-use-this} show that the
dual curvature and dual area measures can be extended.

\begin{defi}
Suppose $K\in \kno$ and $q\in\rbo$. Define
the {\it $q$-th dual area measure} $\wt S_q(K,\cdot)$ by letting
\[
\wt S_q(K,\omega) = \frac1n \int_\omega \rho_K^q(u)\, du,
\]
for each Lebesgue measurable $\omega \subset \sn$,
and the {\it $q$-th dual curvature measure} $\wt C_q(K,\cdot)$ by letting
\begin{equation}\label{def-d-c-m}
\wt C_q(K,\eta) = \frac1n \int_{\balpha^*_K(\eta)} \rho_K^q(u)\, du = \frac1n \int_{\sn} \mathbbm{1}_{\balpha^*_K(\eta)}(u) \rho_K^q(u)\, du,
\end{equation}
for each Borel $\eta\subset\sn$.
\end{defi}

The verification that each $\wt C_q(K, \cdot)$ is a Borel measure is the same as for
the cases where $q =1, \dots, n$ as can be seen by examining the proof of this fact in Theorem \ref{thm3.1}.

Obviously, the total measures of the $q$-th dual curvature measure and the $q$-th
 dual area measure are the $(n-q)$-th dual quermassintegral; i.e.,
\begin{equation}\label{d-total-q}
\wt W_{n-q}(K)= \wt S_q(K,\sn) = \wt C_q(K,\sn).
\end{equation}

It follows immediately from their definitions that the $q$-th dual curvature measure of $K$ is the ``image measure" of the $q$-th
dual area measure of $K$ under $\balpha_K$; i.e.,
\begin{equation}
\wt C_q(K, \eta) = \wt S_q(K, \balpha^*_K(\eta)),
\end{equation}
for each Borel $\eta \subset \sn$.

\subsection{Dual curvature measures for special classes of convex bodies}

\begin{lemm} Suppose $K\in \kno$ and $q\in \mathbb R$. For
each function $g: \sn \to \mathbb R$ that is bounded and Borel,
\begin{equation}\label{cur-m-int}
\int_{\sn} g(v)\, d\wt C_q(K,v) = \frac1n\int_{\sn} g(\alpha_K(u)) \rho_K^q(u) \, du.
\end{equation}
\end{lemm}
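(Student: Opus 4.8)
The plan is to reduce the integral identity \eqref{cur-m-int} to the definition \eqref{def-d-c-m} by a standard measure-theoretic approximation argument, using the almost-everywhere description \eqref{rot} of the reverse radial Gauss image together with the bi-Lipschitz character of $r_K$ established in Section~\ref{2.2}. First I would verify \eqref{cur-m-int} when $g = \mathbbm 1_\eta$ is the indicator function of a Borel set $\eta \subset \sn$. In that case the left-hand side is exactly $\wt C_q(K,\eta)$, which by \eqref{def-d-c-m} equals $\frac1n \int_{\sn} \mathbbm 1_{\sbla \rho \sbra}$\ldots --- more precisely $\frac1n \int_{\sn} \mathbbm 1_{\balpha^*_K(\eta)}(u)\,\rho_K^q(u)\,du$. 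On the other hand $g(\alpha_K(u)) = \mathbbm 1_\eta(\alpha_K(u))$, and by \eqref{rot} we have $\mathbbm 1_\eta(\alpha_K(u)) = \mathbbm 1_{\balpha^*_K(\eta)}(u)$ for all $u \in \sn \setminus \omega_K$, hence for spherical-Lebesgue-a.e.\ $u$. Since $\omega_K$ has spherical Lebesgue measure $0$, the two integrands agree a.e.\ and the right-hand side of \eqref{cur-m-int} also equals $\frac1n\int_{\sn}\mathbbm 1_{\balpha^*_K(\eta)}(u)\,\rho_K^q(u)\,du$, establishing the identity for indicators. (One should note $g\circ\alpha_K$ is Lebesgue measurable by the remark following Lemma~\ref{Lebesgue}.)

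From indicators I would extend by the usual three steps. By linearity, \eqref{cur-m-int} holds for nonnegative simple Borel functions. By the monotone convergence theorem applied on both sides --- the left side against the Borel measure $\wt C_q(K,\cdot)$, the right side against the finite measure $\frac1n\rho_K^q\,du$ on $\sn$ (finite because $\rho_K$ is continuous hence bounded on the compact sphere) --- the identity passes to nonnegative bounded Borel $g$, via an increasing sequence of simple functions converging pointwise to $g$; boundedness of $g$ keeps everything integrable and dominated. Finally, writing a general bounded Borel $g$ as $g^+ - g^-$ and using linearity again gives the full statement. Throughout, the composition $g\circ\alpha_K$ is bounded and Lebesgue measurable, so all integrals on the right are well defined and finite.

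The only genuinely substantive point is the a.e.\ identity $\mathbbm 1_\eta\circ\alpha_K = \mathbbm 1_{\balpha^*_K(\eta)}$ off $\omega_K$, i.e.\ equation \eqref{rot}, and the fact that $\omega_K$ is spherical-Lebesgue-null; both are already in hand from Section~\ref{2.2}. One small bookkeeping subtlety worth spelling out is that $\wt C_q(K,\cdot)$ was defined in \eqref{def-d-c-m} directly as an integral over $\balpha^*_K(\eta)$, so the case $g = \mathbbm 1_\eta$ is essentially a tautology once \eqref{rot} is invoked; the content of the lemma is really that this bookkeeping is compatible with integration of arbitrary bounded Borel $g$, which is exactly what the approximation argument delivers. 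I do not anticipate any serious obstacle; the proof is routine once one is careful about measurability and about working modulo the null set $\omega_K$.
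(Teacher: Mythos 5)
Your proposal is correct and follows essentially the same route as the paper: both reduce to indicator/simple functions via \eqref{def-d-c-m} and the a.e.\ identity \eqref{rot}, exploiting that $\omega_K$ is spherical-Lebesgue-null, and then pass to general bounded Borel $g$ by approximation. The only (immaterial) difference is that the paper uses uniform approximation of $g$ by simple functions, whereas you use monotone pointwise approximation plus the decomposition $g = g^+ - g^-$; both limit passages are valid since $\wt C_q(K,\cdot)$ and $\frac1n\rho_K^q\,du$ are finite measures.
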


A word of explanation is required. In the integral on the right in \eqref{cur-m-int}, the integration is with respect to spherical Lebesgue measure (recall that $\alpha_K$ is defined a.e.\ with respect to spherical Lebesgue measure).

\begin{proof}
Let $\phi$ be a simple function on $\sn$ given by
\[
\phi= \sum_i c_i \mathbbm{1}_{\eta_i}
\]
with $c_i\in\rbo$ and Borel $\eta_i\subset\sn$.
By using \eqref{def-d-c-m}, and \eqref{rot}, we get
\begin{align*}
\int_{\sn} \phi(v)\, d\wt C_q(K,v)
&= \int_{\sn} \sum_i c_i \mathbbm{1}_{\eta_i}(v) \, d\wt C_q(K,v)\\
&= \sum_i c_i \wt C_q(K, \eta_i)\\
&=\frac1n \int_{\sn} \sum_i c_i \mathbbm{1}_{\balpha^*_K(\eta_i)}(u) \rho_K^q(u)\, du \\
&=\frac1n \int_{\sn} \sum_i c_i \mathbbm{1}_{\eta_i}(\alpha_K(u)) \rho_K^q(u)\, du \\
&=\frac1n \int_{\sn} \phi(\alpha_K(u)) \rho_K^q(u)\, du.
\end{align*}
Choose a sequence of simple functions $\phi_k \to g$ uniformly. Then $\phi_k\circ \alpha_K$ converges
to $g\circ \alpha_K$ a.e.\ with respect to spherical Lebesgue measure.
Since $g$ is a Borel function and the radial Gauss map $\alpha_K$
is continuous, $g\circ \alpha_K$ is a Borel function. Thus, $g$ and
$g\circ \alpha_K$ are integrable since $g$ is bounded. Taking the limit $k \to \infty$
gives the desired \eqref{cur-m-int}.
\end{proof}

\begin{lemm}
Suppose $K\in \kno$ and $q\in \mathbb R$. For
each bounded Borel function $g: \sn \to \mathbb R$,
\begin{equation}\label{cur-m-int-2}
\int_{\sn} g(v)\, d\wt C_q(K,v)
= \frac1n\int_{\partial K} x\cdot \nu_K(x)\, g(\nu_K(x)) |x|^{q-n} \, d\hm(x).
\end{equation}
\end{lemm}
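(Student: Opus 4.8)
The plan is to deduce \eqref{cur-m-int-2} by chaining together the previous lemma's identity \eqref{cur-m-int} with the change-of-variables identity \eqref{u-x-formula}. First I would invoke \eqref{cur-m-int} to write
\[
\int_{\sn} g(v)\, d\wt C_q(K,v) = \frac1n\int_{\sn} g(\alpha_K(u)) \rho_K^q(u) \, du .
\]
To push the right-hand side through \eqref{u-x-formula} I need $f := g\circ\alpha_K : \sn\to\mathbb R$ to be bounded and spherical Lebesgue integrable; this is precisely one of the observations recorded just after Lemma \ref{Lebesgue}, namely that a bounded Borel $g$ composed with the continuous map $\alpha_K$ (defined off the Lebesgue-null set $\omega_K$) is bounded and Lebesgue integrable on $\sn$.

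Applying \eqref{u-x-formula} with this choice of $f$ converts the spherical integral into a boundary integral,
\[
\frac1n\int_{\sn} g(\alpha_K(u)) \rho_K^q(u) \, du = \frac1n\int_{\partial'\negthinspace K} g(\alpha_K(\bar x)) \, |x|^{q-n}\, x\cdot\nu_K(x)\, d\hm(x) ,
\]
and the remaining task is to identify $g(\alpha_K(\bar x))$ with $g(\nu_K(x))$ on the domain of integration. Since $r_K^{-1} = \xoverline{\,\cdot\,}$ on $\partial K$ and $\alpha_K = \nu_K\circ r_K$, one has $\alpha_K(\bar x) = \nu_K(r_K(\bar x)) = \nu_K(x)$ for every $x\in\partial K$ with $\bar x\notin\omega_K$, and because $r_K(\omega_K) = \sigma_K$ this is exactly the condition $x\in\partial'\negthinspace K$. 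Hence the integrand over $\partial'\negthinspace K$ equals $x\cdot\nu_K(x)\,g(\nu_K(x))\,|x|^{q-n}$, and since $\hm(\sigma_K) = 0$ the integral over $\partial'\negthinspace K$ coincides with the integral over $\partial K$ appearing in the statement, which yields \eqref{cur-m-int-2}.

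I do not expect any genuine obstacle here: the argument is essentially a two-line composition of \eqref{cur-m-int} and \eqref{u-x-formula}. The only points requiring care are the bookkeeping of null sets — verifying that $g\circ\alpha_K$ is an admissible input for \eqref{u-x-formula}, and that the pointwise identity $\alpha_K\circ\xoverline{\,\cdot\,} = \nu_K$ fails only on $\sigma_K$, which is $\hm$-negligible in $\partial K$ — but both facts are already established in Section~2.
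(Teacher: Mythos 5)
Your proposal is correct and is exactly the paper's argument: the authors also prove \eqref{cur-m-int-2} by taking $f=g\circ\alpha_K$ in \eqref{u-x-formula} and combining with \eqref{cur-m-int}. The extra null-set bookkeeping you supply (integrability of $g\circ\alpha_K$, the identity $\alpha_K(\bar x)=\nu_K(x)$ off $\sigma_K$) is exactly what the paper leaves implicit.
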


Take $f=g\circ \alpha_K$ in \eqref{u-x-formula}
and the desired  \eqref{cur-m-int-2} follows immediately from \eqref{cur-m-int}.

\begin{lemm}
Suppose $K\in \kno$ and $q\in \mathbb R$. For each Borel set $\eta\subset\sn$
\begin{equation}\label{cur-m-int-3}
\wt C_{q}(K,\eta)
=\frac 1n \int_{x\in \nu_K^{-1}(\eta)} x\cdot \nu_K(x)\, |x|^{q-n}  \, d\hm(x).
\end{equation}
\end{lemm}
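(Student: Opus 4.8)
The plan is to simply specialize the preceding lemma to an indicator function. Let $\eta \subset \sn$ be a Borel set and take $g = \mathbbm{1}_\eta$ in \eqref{cur-m-int-2}. Since $\mathbbm{1}_\eta$ is a bounded Borel function, the lemma applies directly, so
\[
\wt C_q(K,\eta) = \int_{\sn} \mathbbm{1}_\eta(v)\, d\wt C_q(K,v)
= \frac1n \int_{\partial K} x\cdot \nu_K(x)\, \mathbbm{1}_\eta(\nu_K(x))\, |x|^{q-n}\, d\hm(x).
\]

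Next I would identify the region of integration. Because $\nu_K$ is defined on $\partial' K = \partial K \setminus \sigma_K$ with $\hm(\sigma_K) = 0$, the integral over $\partial K$ equals the integral over $\partial' K$. For $x \in \partial' K$, the factor $\mathbbm{1}_\eta(\nu_K(x))$ equals $1$ precisely when $\nu_K(x) \in \eta$, i.e.\ when $x \in \nu_K^{-1}(\eta)$, and equals $0$ otherwise. Hence the integrand is supported on $\nu_K^{-1}(\eta)$, and
\[
\frac1n \int_{\partial' K} x\cdot \nu_K(x)\, \mathbbm{1}_\eta(\nu_K(x))\, |x|^{q-n}\, d\hm(x)
= \frac1n \int_{x \in \nu_K^{-1}(\eta)} x\cdot \nu_K(x)\, |x|^{q-n}\, d\hm(x),
\]
which is exactly \eqref{cur-m-int-3}.

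There is essentially no obstacle here; the only point worth a remark is measurability: since $\nu_K : \partial' K \to \sn$ is continuous and $\eta$ is Borel, the set $\nu_K^{-1}(\eta) \subset \partial' K$ is a Borel subset of $\partial K$, so the right-hand integral is well defined. One could alternatively phrase the whole argument by noting that $\mathbbm{1}_\eta \circ \alpha_K = \mathbbm{1}_{\sbla \cdots}$ via \eqref{rot} and invoking \eqref{cur-m-int} together with \eqref{u-x-formula}, but routing through \eqref{cur-m-int-2} is the most direct path since that lemma already packages the change of variables from $\sn$ to $\partial K$.
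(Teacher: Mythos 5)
Your proposal is correct and follows exactly the paper's own route: the paper likewise obtains \eqref{cur-m-int-3} by taking $g=\mathbbm{1}_\eta$ in \eqref{cur-m-int-2}, and your additional remarks on the support of the integrand and the measurability of $\nu_K^{-1}(\eta)$ merely spell out details the paper leaves implicit.
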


Taking $g = \mathbbm{1}_\eta$ in \eqref{cur-m-int-2} immediately yields
\eqref{cur-m-int-3}.

We conclude with three observations regarding the dual curvature measures.
\smallskip

(i) Let $P\in \kno$ be a polytope with outer unit normals
$v_1,\ldots, v_m$. Let $\Delta_i$ be the cone that consists of all of the rays emanating from
the origin and passing through the facet of $P$ whose outer unit normal is $v_i$. Then, recalling that we would abbreviate $\balpha^*_P(\{v_i\})$ by $\balpha^*_P(v_i)$, we have
\begin{equation}\label{3.3-1}
\balpha^*_P(v_i) = \sn \cap \Delta_i.
\end{equation}
If $\eta\subset\sn$ is a Borel set such that $\{v_1,\ldots, v_m\} \cap \eta =\varnothing$,
then $\balpha^*_P(\eta)$ has spherical Lebesgue measure $0$.
Therefore, the dual curvature measure $\wt C_{q}(P,\cdot)$ is discrete
and is concentrated on $\{v_1, \dots, v_m\}$. From the definition of dual curvature measures \eqref{def-d-c-m},
and \eqref{3.3-1}, we see that
\begin{equation}\label{3.3-2}
\wt C_{q}(P,\cdot) = \sum_{i=1}^m c_i \delta_{v_i},
\end{equation}
where, $\delta_v$ denotes the delta measure concentrated at the point $v$ on $\sn$, and
\begin{equation}\label{3.3-3}
c_i = \frac1n \int_{\sn\cap \Delta_i} \rho_P(u)^q\, du.
\end{equation}

(ii) Suppose $K\in \kno$ is strictly convex. If $g:\sn\to\rbo$ is continuous, then \eqref{cur-m-int-2} and \eqref{boundary-sphere} give
\begin{align*}
\int_{\sn} g(v)\, d\wt C_q(K,v)
&= \frac1n\int_{\partial K} x\cdot \nu_K(x)\, g(\nu_K(x)) |x|^{q-n} d\hm(x) \\
&=\frac1n \int_{\sn} g(v) |\nabla h_K(v)|^{q-n} h_K(v)\, dS(K,v).
\end{align*}
This shows that
\begin{equation}\label{3.3-4}
d\wt C_{q}(K, \cdot) = \frac1n h_K |\nabla h_K|^{q-n}\, dS(K,\cdot).
\end{equation}

(iii) Suppose $K\in \kno$ has a $C^2$ boundary with everywhere positive curvature.
Since in this case $S(K,\cdot)$ is absolutely continuous
with respect to spherical Lebesgue measure, it follows that $\wt C_{q}(K,\cdot)$ is absolutely continuous
with respect to spherical Lebesgue measure, and from \eqref{3.3-4} and \eqref{det-hess}, we have
\begin{equation}\label{3.3-5}
\frac{d\wt C_{q}(K,v)}{dv}
= \frac1n  h_K(v) |\nabla h_K(v)|^{q-n} \det(h_{ij}(v) + h_K(v) \delta_{ij}),
\end{equation}
where $(h_{ij})$ denotes the Hessian matrix of $h_K$ with respect to an orthonormal frame on $\sn$.

\subsection{Properties of dual curvature measures}
The weak convergence of the $q$-th dual curvature
measure is critical and is contained in the following lemma.

\begin{lemm}
If $K_i\in\kno$ with  $K_i\to K_0\in \kno$, then
$\wt C_{q}(K_i, \cdot) \to \wt C_{q}(K_0,\cdot)$, weakly.
\end{lemm}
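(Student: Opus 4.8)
The plan is to check weak convergence directly from its definition: it suffices to show that $\int_{\sn} g(v)\, d\wt C_{q}(K_i,v) \to \int_{\sn} g(v)\, d\wt C_{q}(K_0,v)$ for every $g \in C(\sn)$. By the integral representation \eqref{cur-m-int} this is the same as proving
\[
\frac1n \int_{\sn} g(\alpha_{K_i}(u))\,\rho_{K_i}^q(u)\, du \;\longrightarrow\; \frac1n \int_{\sn} g(\alpha_{K_0}(u))\,\rho_{K_0}^q(u)\, du ,
\]
an assertion about integrals over the fixed finite measure space $\sn$ equipped with spherical Lebesgue measure; so the natural engine is the bounded (dominated) convergence theorem.

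First I would record the elementary bounds. Since $K_i \to K_0$ and $K_0 \in \kno$, there exist constants $0 < r < R$ with $rB \subseteq K_i \subseteq RB$ for all $i$ (the finitely many small-index bodies only force a harmless adjustment of $r$ and $R$); hence $r \le \rho_{K_i} \le R$ on $\sn$ for every $i$, so the densities $\rho_{K_i}^q$ are uniformly bounded above and bounded away from $0$. Moreover radial convergence gives $\rho_{K_i} \to \rho_{K_0}$ uniformly, whence $\rho_{K_i}^q \to \rho_{K_0}^q$ uniformly. Combined with $|g| \le \|g\|$, the integrands $u \mapsto g(\alpha_{K_i}(u))\rho_{K_i}^q(u)$ are bounded by one fixed constant independent of $i$.

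Next I would treat the Gauss-map factor. Put $\omega = \bigcup_{i=0}^\infty \omega_{K_i}$; this is a countable union of sets of spherical Lebesgue measure $0$, hence itself null. For $u \in \sn \setminus \omega$, Lemma \ref{converge} applied to the constant sequence $u_i \equiv u$ yields $\alpha_{K_i}(u) \to \alpha_{K_0}(u)$, and continuity of $g$ then gives $g(\alpha_{K_i}(u)) \to g(\alpha_{K_0}(u))$ for almost every $u$. Therefore $g(\alpha_{K_i}(u))\rho_{K_i}^q(u) \to g(\alpha_{K_0}(u))\rho_{K_0}^q(u)$ pointwise a.e., with a uniform bound on the finite-measure space $\sn$, and the bounded convergence theorem gives the convergence of the integrals. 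Dividing by $n$ and invoking \eqref{cur-m-int} once more completes the argument.

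The only point with any content is the pointwise a.e.\ convergence $\alpha_{K_i}(u) \to \alpha_{K_0}(u)$, and this has already been isolated as Lemma \ref{converge}; everything else is the bounded convergence theorem applied on the common null exceptional set $\omega$, together with the trivial uniform control of the radial functions. One subtlety worth flagging in the write-up is that \eqref{cur-m-int} requires $g$ to be a bounded Borel function, which is automatic here since $g$ is continuous on the compact space $\sn$.
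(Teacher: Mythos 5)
Your proposal is correct and follows essentially the same route as the paper: both reduce weak convergence to the integral representation \eqref{cur-m-int}, use uniform convergence of $\rho_{K_i}$ and Lemma \ref{converge} for a.e.\ convergence of $\alpha_{K_i}$, and conclude by dominated convergence. Your write-up merely makes explicit the uniform bounds and the common null set, which the paper leaves implicit.
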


\begin{proof}  Suppose $g:\sn\to\rbo$ is continuous.
From \eqref{cur-m-int} we know that
\[
\int_{\sn} g(v)\, d\wt C_q(K_i,v) = \frac1n\int_{\sn} g(\alpha_{K_i}(u)) \rho_{K_i}^q(u) \, du,
\]
for all $i$.
Since $K_i \to K_0$, with respect to the Hausdorff metric, we know that
$\rho_{K_i} \to \rho_{K_0}$, uniformly, and using Lemma \ref{converge} that
$\alpha_{K_i} \to \alpha_K $, almost everywhere on $\sn$. Thus,
\[
\frac1n\int_{\sn} g(\alpha_{K_i}(u)) \rho_{K_i}^q(u) \, du
\    \longrightarrow   \
\frac1n\int_{\sn} g(\alpha_{K_0}(u)) \rho_{K_0}^q(u) \, du,
\]
from which it follows that $\wt C_q(K_i,\cdot) \to \wt C_q(K_0, \cdot)$, weakly.
\end{proof}

\begin{lemm}
If $K\in \kno$ and $q\in \mathbb R$, then the  dual curvature measure $\wt C_{q}(K,\cdot)$ is
absolutely continuous with respect to the surface area measure $S(K,\cdot)$.
\end{lemm}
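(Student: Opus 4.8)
The plan is to read off absolute continuity directly from the integral representation \eqref{cur-m-int-3} of the dual curvature measure together with the definition \eqref{s-a-m} of the surface area measure; the only thing that needs checking is that the integrand appearing in \eqref{cur-m-int-3} is bounded on $\partial'\negthinspace K$.

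First I would fix $0 < r < R$ with $rB \subseteq K \subseteq RB$, which is possible precisely because $K \in \kno$ contains the origin in its interior. Then every $x \in \partial'\negthinspace K$ satisfies $r \le |x| \le R$, so $|x|^{q-n}$ is bounded above by the positive constant $\max\{r^{q-n}, R^{q-n}\}$ for every real $q$. Moreover $x\cdot\nu_K(x) = h_K(\nu_K(x))$, which is immediate from the definitions of the support function and the spherical image map (cf.\ \eqref{yy1}), and $r \le h_K(v) \le R$ for all $v \in \sn$ since $rB \subseteq K \subseteq RB$. Hence there is a constant $M = M(K,q) > 0$ with
\[
0 < (x\cdot\nu_K(x))\,|x|^{q-n} \le M \qquad \text{for all } x \in \partial'\negthinspace K.
\]

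Next, for an arbitrary Borel set $\eta \subseteq \sn$, the representation \eqref{cur-m-int-3} gives
\[
\wt C_q(K,\eta) = \frac1n \int_{\nu_K^{-1}(\eta)} (x\cdot\nu_K(x))\,|x|^{q-n}\, d\hm(x) \le \frac Mn\, \hm(\nu_K^{-1}(\eta)) = \frac Mn\, S(K,\eta),
\]
the last equality being the definition \eqref{s-a-m}. In particular, if $S(K,\eta) = 0$ then $\wt C_q(K,\eta) = 0$, which is exactly the assertion that $\wt C_q(K,\cdot)$ is absolutely continuous with respect to $S(K,\cdot)$.

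There is no genuine obstacle here; the one point that must not be glossed over is that the lower bound $|x| \ge r > 0$, and with it the boundedness of $|x|^{q-n}$ when $q < n$, really does use that the origin lies in the \emph{interior} of $K$, so the argument is special to the class $\kno$. I would also note that the same estimate in fact yields the stronger statement $\wt C_q(K,\cdot) \le \frac Mn\, S(K,\cdot)$ as measures, and that in the strictly convex case \eqref{3.3-4} identifies the Radon--Nikodym density explicitly as $\frac1n h_K|\nabla h_K|^{q-n}$.
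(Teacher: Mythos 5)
Your argument is correct and rests on the same representation \eqref{cur-m-int-3} together with the definition \eqref{s-a-m} that the paper uses; the paper's proof is in fact slightly shorter, since once $\hm(\nu_K^{-1}(\eta))=0$ the integral in \eqref{cur-m-int-3} vanishes automatically, with no need to bound the integrand. Your extra boundedness step is harmless and does buy the stronger conclusion $\wt C_q(K,\cdot) \le \frac{M}{n}\, S(K,\cdot)$, which the paper does not state.
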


\begin{proof}\label{ab-cont}
Suppose $\eta \subset \sn$ is such that $S(K,\eta)=0$, or equivalently, $\hm(\nu_K^{-1}(\eta))=0$.
 In this case, by \eqref{cur-m-int-3}, we conclude,
\[
\wt C_{q}(K,\eta)
=\frac 1n \int_{x\in \nu_K^{-1}(\eta)}|x|^{q-n} x\cdot \nu_K(x) \, d\hm(x) =0,
\]
since we are integrating over  a set of measure $0$.
Thus, the dual curvature measure $\wt C_{q}(K,\cdot)$ is
absolutely continuous with respect to the surface area measure $S(K,\cdot)$.
\end{proof}

The following lemma tells us that the $n$-th dual curvature measure of a convex body is the cone-volume measure of the body,
while the $0$-th dual curvature measure of the convex body is essentially Aleksandrov's integral curvature
of the polar of the body.

\begin{lemm}
If $K\in \kno$, then
\begin{align}
\wt C_n(K, \cdot) &= V_K, \label{dual-integral} \\
\wt C_0(K, \cdot) &= \frac1n C_0(K^*, \cdot). \label{dual-integral-2}
\end{align}
\end{lemm}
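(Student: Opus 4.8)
The plan is to read off both identities directly from the integral representations of the dual curvature measures established above, together with the relation between the reverse radial Gauss image of $K$ and the radial Gauss image of its polar body.

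For \eqref{dual-integral}, I would start from the representation \eqref{cur-m-int-3} and specialize to $q=n$. Since then the factor $|x|^{q-n}$ is identically $1$, this yields, for each Borel $\eta\subset\sn$,
\[
\wt C_n(K,\eta)=\frac1n\int_{x\in\nu_K^{-1}(\eta)} x\cdot\nu_K(x)\,d\hm(x).
\]
But the right-hand side is precisely the definition \eqref{c-v-m} of the cone volume measure $V_K(\eta)$, so $\wt C_n(K,\cdot)=V_K$. Alternatively one could specialize \eqref{d-cur-i} to $q=n$, giving $\wt C_n(K,\eta)=\frac1n\int_{\sbla\eta\sbra\text{-preimage}}\rho_K^n\,du$ over $\balpha_K^*(\eta)$, and then invoke the integral identity \eqref{u-x-formula-1}; either route is routine.

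For \eqref{dual-integral-2}, I would specialize the definition \eqref{def-d-c-m} to $q=0$. Since $\rho_K^0\equiv 1$, this gives, for each Borel $\eta\subset\sn$,
\[
\wt C_0(K,\eta)=\frac1n\int_{\balpha_K^*(\eta)}du=\frac1n\,\hm\!\bigl(\balpha_K^*(\eta)\bigr),
\]
the spherical Lebesgue measure of $\balpha_K^*(\eta)$, which is a well-defined quantity by Lemma \ref{Lebesgue}. Next, Lemma \ref{measure-zero-bold} identifies $\balpha_K^*(\eta)=\balpha_{K^*}(\eta)$, so $\wt C_0(K,\eta)=\frac1n\,\hm(\balpha_{K^*}(\eta))$. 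Finally, by the definition \eqref{i-curv-meas} of Aleksandrov's integral curvature applied to the body $K^*$, one has $\hm(\balpha_{K^*}(\eta))=C_0(K^*,\eta)$, and hence $\wt C_0(K,\cdot)=\frac1n C_0(K^*,\cdot)$.

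Since every ingredient needed here — the integral representations \eqref{cur-m-int-3} and \eqref{def-d-c-m}, the measurability statement of Lemma \ref{Lebesgue}, the polarity relation of Lemma \ref{measure-zero-bold}, and the definition of $C_0$ — is already available, there is no genuine obstacle; the only point requiring a moment's care is simply that all the set functions in play are legitimately defined (measurability of $\balpha_K^*(\eta)$, and the fact that $\omega_K$ carries zero spherical Lebesgue measure so that nothing changes on discarding it), which is exactly what the preliminary lemmas were arranged to guarantee.
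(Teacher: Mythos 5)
Your proof is correct and follows exactly the paper's argument: specializing \eqref{cur-m-int-3} to $q=n$ to recover the cone volume measure \eqref{c-v-m}, and specializing \eqref{def-d-c-m} to $q=0$, then applying Lemma \ref{measure-zero-bold} and the definition \eqref{i-curv-meas} of integral curvature for $K^*$. No gaps.
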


\begin{proof} Let $\eta \subset \sn$ be a Borel set.
From \eqref{cur-m-int-3}, with $q=n$, and \eqref{c-v-m}, we have
\[
\wt C_n(K, \eta) = \frac1n \int_{x\in\nu_K^{-1}(\eta)} x\cdot \nu_K(x)\, d\hm(x) = V_K(\eta),
\]
which establishes \eqref{dual-integral}.

From the definition of $0$-th dual curvature measure \eqref{d-cur-i}, with $i=0$, Lemma \ref{measure-zero-bold},
followed by \eqref{i-curv-meas}, we have
\[
\wt C_0(K,\eta) = \frac1n \hm(\balpha^*_K(\eta))
= \frac1n \hm(\balpha_{K^*}(\eta)) = \frac1n \, C_0(K^*,\eta),
\]
which gives \eqref{dual-integral-2}.
\end{proof}

The theory of valuations has witnessed explosive growth during the past two decades (see e.g. 
\cite{A1}--\negthinspace\negthinspace\cite{ A4}, \cite{Lud03}--\negthinspace\negthinspace\cite{LR10}, and \cite{S1}--\negthinspace\negthinspace\cite{S2}).     
Let $\mcal(\sn)$ denote the set of Borel measures on $\sn$. 
That the dual area measures are valuations whose range is $\mcal(\sn)$ is easily seen. But it turns out that the dual curvature measures are valuations (whose range is $\mcal(\sn)$) as well.
We now show that, for fixed index $q$, the functional that associates the body $K\in\kno$ with $\wt C_q(K, \cdot) \in \mcal(\sn)$ is a valuation.

\begin{lemm} For each real $q$,
the dual curvature measure $\wt C_q:\kno \to \mcal(\sn)$ is a valuation; i.e.,
if $K, L \in\kno$, are such that $K\cup L\in\kno$  then
\[
 \wt C_q(K, \cdot) + \wt C_q(L, \cdot) = \wt C_q(K\cap L, \cdot) + \wt C_q(K\cup L, \cdot).
 \]
\end{lemm}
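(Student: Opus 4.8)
The plan is to reduce the asserted equality of measures to an identity that holds pointwise for spherical Lebesgue almost every point of $\sn$. First observe that $K\cap L$ contains the origin in its interior (since both $K$ and $L$ do), so $K\cap L\in\kno$; together with the hypothesis $K\cup L\in\kno$, all four measures in the statement are thus well-defined Borel measures. For any $M\in\kno$ and any Borel set $\eta\subset\sn$, taking $g=\mathbbm{1}_\eta$ in \eqref{cur-m-int} gives
\[
\wt C_q(M,\eta)=\frac1n\int_{\sn}\mathbbm{1}_\eta(\alpha_M(u))\,\rho_M(u)^q\,du .
\]
Since $\rho_{K\cap L}=\min\{\rho_K,\rho_L\}$ and $\rho_{K\cup L}=\max\{\rho_K,\rho_L\}$ at every point of $\sn$, the lemma will follow once one shows that, for every $u$ outside the null set $\omega_K\cup\omega_L\cup\omega_{K\cap L}\cup\omega_{K\cup L}$,
\[
\mathbbm{1}_\eta(\alpha_K(u))\rho_K(u)^q+\mathbbm{1}_\eta(\alpha_L(u))\rho_L(u)^q=\mathbbm{1}_\eta(\alpha_{K\cap L}(u))\rho_{K\cap L}(u)^q+\mathbbm{1}_\eta(\alpha_{K\cup L}(u))\rho_{K\cup L}(u)^q .
\]
Integrating this over $\sn$ and dividing by $n$ then yields the claim.

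To prove the pointwise identity at such a $u$, I would distinguish cases according to the order of $\rho_K(u)$ and $\rho_L(u)$. Suppose $\rho_K(u)<\rho_L(u)$. Then the point $\rho_K(u)u$ lies in the interior of $L$, so $K\cap L$ and $K$ agree in a neighborhood of it and hence have the same normal cone there; since $u\notin\omega_K$ that cone is a single ray, so $\alpha_{K\cap L}(u)=\alpha_K(u)$, and clearly $\rho_{K\cap L}(u)=\rho_K(u)$. Dually, $\rho_L(u)u$ is a boundary point of $K\cup L$, and since $L\subset K\cup L$ its normal cone there is a nonempty subcone of the normal cone of $L$ at that point, which is the single ray through $\alpha_L(u)$; hence $\alpha_{K\cup L}(u)=\alpha_L(u)$ and $\rho_{K\cup L}(u)=\rho_L(u)$. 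Both sides of the pointwise identity then reduce to $\mathbbm{1}_\eta(\alpha_K(u))\rho_K(u)^q+\mathbbm{1}_\eta(\alpha_L(u))\rho_L(u)^q$. The case $\rho_K(u)>\rho_L(u)$ is symmetric.

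The remaining case $\rho_K(u)=\rho_L(u)=:r$ is where the hypothesis that $K\cup L$ is convex is used, and I expect it to be the only delicate step. Here $x_0:=ru$ is a common boundary point of $K$, $L$, $K\cap L$, and $K\cup L$. Since $x_0\in\partial(K\cup L)$ and $K\cup L$ is a convex body, the normal cone $N(K\cup L,x_0)$ is nonzero; but $K,L\subset K\cup L$ forces $N(K\cup L,x_0)\subset N(K,x_0)\cap N(L,x_0)$, and these last two cones are exactly the single rays through $\alpha_K(u)$ and through $\alpha_L(u)$. Thus those rays must coincide, so $\alpha_K(u)=\alpha_L(u)=:v$, and $N(K\cup L,x_0)$ equals this common ray, giving $\alpha_{K\cup L}(u)=v$. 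Finally the supporting hyperplane of $K$ at $x_0$ with outer normal $v$ also supports $K\cap L$ (as $K\cap L\subset K$), so $v\in N(K\cap L,x_0)$, which is a single ray because $u\notin\omega_{K\cap L}$; hence $\alpha_{K\cap L}(u)=v$ as well. Since also $\rho_{K\cap L}(u)=\rho_{K\cup L}(u)=r$, both sides of the pointwise identity equal $2\,\mathbbm{1}_\eta(v)\,r^q$, which completes the argument. The facts invoked along the way — that the normal cone of a convex set at a point depends only on the set near that point, that a boundary point of a convex body has a nonzero normal cone, and the behavior of radial functions under $\cap$ and $\cup$ — are all standard, so the real content is just the organization of this case distinction.
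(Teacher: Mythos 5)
Your proof is correct and follows essentially the same route as the paper: partition $\sn$ according to whether $\rho_K(u)$ is less than, greater than, or equal to $\rho_L(u)$, verify the almost-everywhere identities among the radial functions and radial Gauss maps of $K$, $L$, $K\cap L$, $K\cup L$ on each piece, and integrate via \eqref{cur-m-int}. The only difference is cosmetic (you test against $\mathbbm{1}_\eta$ rather than continuous $g$), and your normal-cone case analysis supplies details the paper leaves implicit.
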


\begin{proof}
Since for $Q\in\kno$, the function $r_Q: \sn \to \partial Q$ is a bijection, we have the following
disjoint partition of $\sn = \Omega_0 \cup \Omega_L \cup \Omega_K$, where
\begin{align*}
\Omega_0 &= r_K^{-1}(\partial K \cap \partial L) = r_L^{-1}(\partial K \cap \partial L)
=\{ u\in\sn: \rho_K(u)=\rho_L(u) \}, \\
\Omega_L &= r_K^{-1}(\partial K \cap \text{int} L) = r_L^{-1}((\rn\setminus K) \cap \partial L)
=\{ u\in\sn: \rho_K(u) < \rho_L(u) \},  \\
\Omega_K &= r_K^{-1}(\partial K \cap (\rn\setminus L)) = r_L^{-1}(\text{int} K \cap \partial L)
=\{ u\in\sn: \rho_K(u) > \rho_L(u) \}.
\end{align*}
Since $K\cup L$ is a convex body, for $\hm$-almost all $u \in \Omega_0$, we have
\begin{align*}
\rho_K(u)&=\rho_L(u) =\rho_{K\cap L}(u) = \rho_{K\cup L}(u), \\
\alpha_K(u)&=\alpha_L(u)=\alpha_{K\cap L}(u) = \alpha_{K\cup L}(u);
\end{align*}
For $\hm$-almost all $u\in \Omega_L $, we have
\begin{align*}
\rho_K(u)&=\rho_{K\cap L}(u), \ \rho_L(u) = \rho_{K\cup L}(u), \\
\alpha_K(u)&=\alpha_{K\cap L}(u), \ \alpha_L(u)=\alpha_{K\cup L}(u);
\end{align*}
For $\hm$-almost all $u\in \Omega_K$, we have
\begin{align*}
\rho_K(u)&=\rho_{K\cup L}(u), \ \rho_L(u) = \rho_{K\cap L}(u), \\
\alpha_K(u)&=\alpha_{K\cup L}(u), \ \alpha_L(u)=\alpha_{K\cap L}(u).
\end{align*}
From this it follows that if $g:\sn\to\rbo$ is continuous, then
\begin{align*}
\int_{\Omega_0} g(\alpha_K(u))\rho_K^q(u)\, du
&= \int_{\Omega_0} g(\alpha_{K\cap L}(u))\rho_{K\cap L}^q(u)\, du,\\
\int_{\Omega_L } g(\alpha_K(u))\rho_K^q(u)\, du
&= \int_{\Omega_L } g(\alpha_{K\cap L}(u))\rho_{K\cap L}^q(u)\, du,\\
\int_{\Omega_K} g(\alpha_K(u))\rho_K^q(u)\, du
&= \int_{\Omega_K} g(\alpha_{K\cup L}(u))\rho_{K\cup L}^q(u)\, du,\\
\int_{\Omega_0} g(\alpha_L(u))\rho_L^q(u)\, du
&= \int_{\Omega_0} g(\alpha_{K\cup L}(u))\rho_{K\cup L}^q(u)\, du,\\
\int_{\Omega_L } g(\alpha_L(u))\rho_L^q(u)\, du
&= \int_{\Omega_L } g(\alpha_{K\cup L}(u))\rho_{K\cup L}^q(u)\, du,\\
\int_{\Omega_K} g(\alpha_L(u))\rho_L^q(u)\, du
&= \int_{\Omega_K} g(\alpha_{K\cap L}(u))\rho_{K\cap L}^q(u)\, du.
\end{align*}
Summing up both sides of the integrals above gives
\begin{align*}
\int_{\sn} &g(\alpha_K(u))\rho_K^q(u)\, du + \int_{\sn} g(\alpha_L(u))\rho_L^q(u)\, du \\
&= \int_{\sn} g(\alpha_{K\cap L}(u))\rho_{K\cap L}^q(u)\, du
  + \int_{\sn} g(\alpha_{K\cup L}(u))\rho_{K\cup L}^q(u)\, du.
\end{align*}
Since this holds for each continuous $g$, we may appeal to \eqref{cur-m-int} to obtain the desired valuation property.
\end{proof}

\section{Variational formulas for the dual quermassintegrals}

When using the variational method to solve the Minkowski problem,
one of the crucial steps is to establish the variational formula
for volume which gives an integral of a continuous function on the unit
sphere integrated with respect to the surface area
measure. The variational formula is the key to transforming
the Minkowski problem into the Lagrange equation
of an optimization problem. Since the variational method needs to deal with
convex bodies that are not necessarily smooth, finding variational
formulas of geometric invariants of convex bodies is difficult.
In fact, for either quermassintegrals or dual quermassintegrals,
only the variational formula for one is known --- the volume.
This variational formula was established by Aleksandrov.

Let $K\in \kno$ and
let $f: \sn \to \rbo$ be continuous. For some $\delta>0$,
let $h_t: \sn \to (0,\infty)$ be defined for $v\in\sn$, and each $t\in(-\delta,\delta)$, by
\[
h_t(v)= h_K(v) + t f(v) + o(t,v),
\]
where $o(t,v)$ is continuous in $v\in \sn$ and $o(t,v)/t \to 0$, as $t\to 0$,
uniformly with respect to $v\in \sn$.

Let $\blb h_t \brb$ be the Wulff shape determined by $h_t$. Aleksandrov's variational
formula is
\[
\lim_{t\to 0} \frac{V(\blb h_t \brb) - V(K)}t = \int_{\sn} f(v)\, dS(K,v).
\]
The proof makes critical use of the Minkowski mixed-volume inequality.
Such a variational formula is not yet known for surface area or other quermassintegrals.

In this section, we shall take a completely different approach. Instead of considering
Wulff shapes, we consider convex hulls. We establish variational formulas
for all dual quermassintegrals. In particular, Aleksandrov's variational principle
will be established without using the Minkowski mixed-volume inequality.

Let $\Omega \subset \sn$ be a closed set that is not contained in any closed hemisphere of $\sn$.
Let $\rho_0 : \Omega \to (0,\infty)$
and $g: \Omega \to \mathbb R$ be continuous.
For some $\delta>0$, let $\rho_t : \Omega \to (0,\infty)$ be defined for $u\in\Omega$, and each
$t\in(-\delta,\delta)$, by
\begin{equation} \label{zx}
\log \rho_t(u)= \log \rho_0(u) + t g(u) + o(t,u),
 \end{equation}
where $o(t,u)$ is continuous in $u\in \Omega$ and $o(t,u)/t \to 0$, as $t\to 0$, uniformly with respect to $u\in \Omega$.
Recall that the logarithmic family of convex hulls $\bla \rho_{t} \bra$ of $(\rho_0, g)$,
indexed by $t\in (-\delta,\delta)$, is just the family of convex bodies ${\conv}\{\rho_t(u) u : u\in \Omega\}$, indexed by  $t\in(-\delta,\delta)$.

Since $\rho_t \to \rho_0$, uniformly on $\Omega$, for the associated bodies we have,
\begin{equation} \label{4.2}
 \bla \rho_{t} \bra \to \bla \rho_{0} \bra, \quad\text{as}\quad t\to 0.
 \end{equation}
We restate \eqref{support-convex-hull},
that for each $v\in \sn$,
\begin{equation} \label{4.3}
 h_{\sbla \rho_{t} \sbra}(v)=\max\nolimits_{u\in \Omega} \,(u\cdot v)\rho_t(u),
 \end{equation}
 for each $t\in(-\delta,\delta)$.

The following lemma shows that the support functions of a logarithmic family
of convex hulls are differentiable with respect to the variational variable.

\begin{lemm}\label{lemma4.1}
Suppose
$\Omega \subset \sn$ is a closed set that is not contained in any closed hemisphere of $\sn$.
Suppose $\rho_0 : \Omega \to (0,\infty)$, and $g: \Omega \to \mathbb R$ are continuous.
Let $\bla \rho_{t} \bra$ be a logarithmic family of convex hulls of $(\rho_0, g)$, then
\begin{equation}\label{radial-variation1}
\lim_{t\to 0} \frac{\log h_{\sbla \rho_{t} \sbra } (v) - \log h_{\sbla \rho_{0} \sbra}(v)} t = g(\alpha_{\sbla \rho_{0} \sbra}^{*}(v)),
\end{equation}
for all $v\in \sn\setminus \eta_{\sbla \rho_{0} \sbra}$; i.e., for all regular normals $v$ of ${\bla \rho_{0} \bra}$. Hence \eqref{radial-variation1} holds a.e. with respect to spherical Lebesgue measure. Moreover, there exists
a $\delta_0 > 0$ and an $M>0$ so that
\begin{equation}\label{4-bound}
|\log h_{\sbla \rho_{t} \sbra } (v) - \log h_{\sbla \rho_{0} \sbra}(v)| \le M |t|,
\end{equation}
for all $v\in \sn$ and all $t\in (-\delta_0, \delta_0)$.
\end{lemm}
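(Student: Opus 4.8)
The plan is to run a Wulff-shape-type variational argument, but on convex hulls (radial functions) rather than on support functions. Write $K_t=\bla\rho_t\bra$ and $K_0=\bla\rho_0\bra$, so that \eqref{4.2} gives $K_t\to K_0$ as $t\to 0$, and \eqref{4.3} gives $h_{K_t}(v)=\max_{u\in\Omega}(u\cdot v)\rho_t(u)$; since $\Omega$ lies in no closed hemisphere, for each $v$ this maximum is attained at some $u$ with $u\cdot v>0$, so all the support values are positive and their logarithms make sense. Fix a regular normal $v$ of $K_0$. By the (unlabeled) lemma immediately preceding Lemma \ref{wulff-hull}, the point $u_0:=\alpha^*_{K_0}(v)$ lies in $\Omega$, and since for a regular normal $\balpha^*_{K_0}(v)$ is a singleton containing every maximizer in \eqref{4.3}, the point $u_0$ is the unique maximizer, so $h_{K_0}(v)=(u_0\cdot v)\rho_0(u_0)$.

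I would first dispose of the Lipschitz-type bound \eqref{4-bound}, which needs no regularity of $v$. From \eqref{zx}, $\log\rho_t(u)-\log\rho_0(u)=tg(u)+o(t,u)$; since $g$ is bounded on the compact set $\Omega$, say $|g|\le G$, and $o(t,u)/t\to 0$ uniformly in $u$, there is $\delta_0\in(0,\delta)$ with $|\log\rho_t(u)-\log\rho_0(u)|\le M|t|$ on $\Omega$ for $|t|<\delta_0$, where $M:=G+1$. Hence $e^{-M|t|}\rho_0\le\rho_t\le e^{M|t|}\rho_0$ on $\Omega$; multiplying by $u\cdot v$ at the maximizing direction (where it is positive) and taking the max in \eqref{4.3} gives $e^{-M|t|}h_{K_0}(v)\le h_{K_t}(v)\le e^{M|t|}h_{K_0}(v)$ for all $v\in\sn$, which is \eqref{4-bound}.

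For \eqref{radial-variation1}, I would obtain matching one-sided estimates for the difference quotient $Q(t,v):=\bigl(\log h_{K_t}(v)-\log h_{K_0}(v)\bigr)/t$. Testing the max defining $h_{K_t}(v)$ at $u_0\in\Omega$ yields $h_{K_t}(v)\ge(u_0\cdot v)\rho_t(u_0)=h_{K_0}(v)\,e^{tg(u_0)+o(t,u_0)}$, so $\log h_{K_t}(v)-\log h_{K_0}(v)\ge tg(u_0)+o(t,u_0)$; dividing by $t$ and using $o(t,u_0)/t\to 0$ gives $\liminf_{t\to0^+}Q(t,v)\ge g(u_0)$ and $\limsup_{t\to0^-}Q(t,v)\le g(u_0)$. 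For the reverse direction, choose for each $t$ a maximizer $u_t\in\Omega$ in \eqref{4.3}, so $h_{K_t}(v)=(u_t\cdot v)\rho_t(u_t)\le h_{K_0}(v)\,e^{tg(u_t)+o(t,u_t)}$ and hence $\log h_{K_t}(v)-\log h_{K_0}(v)\le tg(u_t)+o(t,u_t)$. The crux is that $u_t\to u_0$: along any sequence $t_k\to 0$, compactness of $\Omega$ extracts $u_{t_k}\to u'$, and passing to the limit in $h_{K_{t_k}}(v)=(u_{t_k}\cdot v)\rho_{t_k}(u_{t_k})$ — using $h_{K_{t_k}}(v)\to h_{K_0}(v)$ from \eqref{4.2} and $\rho_t\to\rho_0$ uniformly on $\Omega$ — gives $h_{K_0}(v)=(u'\cdot v)\rho_0(u')$, so the generator $\rho_0(u')u'\in K_0$ lies on $H_{K_0}(v)$; regularity of $v$ forces $u'\in\balpha^*_{K_0}(v)=\{u_0\}$, so $u'=u_0$. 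Thus $u_t\to u_0$, whence $g(u_t)\to g(u_0)$ by continuity of $g$ and $o(t,u_t)/t\to 0$ by the uniform smallness of $o(t,\cdot)/t$; dividing the last inequality by $t$ gives $\limsup_{t\to0^+}Q(t,v)\le g(u_0)$ and $\liminf_{t\to0^-}Q(t,v)\ge g(u_0)$. Combining the two sets of one-sided bounds shows that $\lim_{t\to0}Q(t,v)$ exists and equals $g(u_0)=g(\alpha^*_{K_0}(v))$; since the regular normals $\sn\setminus\eta_{K_0}$ form a set of full $\hm$-measure, \eqref{radial-variation1} holds a.e.

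The main obstacle is exactly the convergence $u_t\to u_0$ of the maximizers: this is where the regularity of $v$ — equivalently, that $\balpha^*_{K_0}(v)$ is a singleton — is indispensable, and it is established by a compactness argument fed by the uniform convergence $\rho_t\to\rho_0$ together with the Hausdorff convergence $K_t\to K_0$. Once this is in hand, everything else is elementary manipulation of the exponential form of $\rho_t$ and of the uniform control on the error term $o(t,\cdot)$.
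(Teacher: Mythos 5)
Your proof is correct and follows essentially the same route as the paper: the same choice of maximizers $u_t$ in \eqref{4.3}, the same compactness argument (driven by $\rho_t\to\rho_0$ uniformly and $\bla\rho_t\bra\to\bla\rho_0\bra$) showing $u_t\to u_0=\alpha^*_{\sbla\rho_0\sbra}(v)$ via regularity of $v$, and the same pair of one-sided estimates $tg(u_0)+o(t,u_0)\le\log h_{\sbla\rho_t\sbra}(v)-\log h_{\sbla\rho_0\sbra}(v)\le tg(u_t)+o(t,u_t)$. The only (harmless) deviation is that you obtain \eqref{4-bound} for all $v$ directly from the sandwich $e^{-M|t|}\rho_0\le\rho_t\le e^{M|t|}\rho_0$ and the max formula, whereas the paper first gets it at regular normals and then extends by continuity.
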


\begin{proof}  Recall that $ \eta_{\sbla \rho_{0} \sbra}$ is the set of measure $0$, off of which, $\balpha^*_{\sbla \rho_{0} \sbra}$ is single valued and by
\eqref{alpha-star-function} the function
$\alpha^*_{\sbla \rho_{0} \sbra} : \sn\setminus \eta_{\sbla \rho_{0} \sbra} \to \sn$ is defined by $\alpha^*_{\sbla \rho_{0} \sbra} = r_{\sbla \rho_{0} \sbra}^{-1} \circ x_{\sbla \rho_{0} \sbra}$ or $\balpha^*_{\sbla \rho_{0} \sbra}(v) =\{\alpha^*_{\sbla \rho_{0} \sbra}(v)\}$.

Let $v\in \sn\setminus \eta_{\sbla \rho_{0} \sbra}$ be fixed throughout the proof.
From \eqref{4.3} we know that
there exist $u_t \in \Omega$ such that
\begin{equation}\label{m3h}
h_{\sbla \rho_{t} \sbra}(v)= (u_t\cdot v)\rho_t(u_t)\quad\text{while}\quad h_{\sbla \rho_{t} \sbra}(v) \ge (u\cdot v)\rho_t(u),
\end{equation}
for all $u\in\Omega$. Note that $u_t\cdot v >0$, for all $t$.

From \eqref{m3h}
we have
$h_{\sbla \rho_{0} \sbra}(v) =  (u_0\cdot v)\,\rho_0(u_0)$,
and hence
we know that
 $\rho_0(u_0)u_0 \in H_{\sbla \rho_{0} \sbra}(v)=\{x\in\rn : x \cdot v =h_{\sbla \rho_{0} \sbra}(v)\}$.
But $\rho_0(u_0)u_0 \in \bla \rho_{0} \bra$, so  $\rho_0(u_0)u_0 \in \partial \bla \rho_{0} \bra$,
and hence $u_0\in \balpha^*_{\sbla \rho_{0} \sbra}(v)=\{  \alpha^*_{\sbla \rho_{0} \sbra}(v)  \}$.  Thus
\begin{equation}  \label{m111}
u_0= \alpha^*_{\sbla \rho_{0} \sbra}(v).
\end{equation}

We first show that
\begin{equation}\label{hatter}
\lim\nolimits_{t\to 0} u_t = u_0
\end{equation}
where the $u_t$ come from \eqref{m3h}; i.e., are such that $h_{\sbla \rho_{t} \sbra}(v)= (u_t\cdot v)\rho_t(u_t)$. To see this consider a sequence $t_k\to 0$. The sequence $u_{t_k}$ in the compact set $\Omega$, has a convergent subsequence, which we also denote by $u_{t_k}$, such that
\[
u_{t_k}\rightarrow u'\in \Omega.
\]
Since $\rho_{t_k} \to \rho_0$, uniformly on $\Omega$,
\begin{equation}\label{4.12}
h_{\sbla \rho_{t_k} \sbra } (v) = (u_{t_k}\cdot v)\,\rho_{t_k}(u_{t_k}) \to (u' \cdot v)\,\rho_{0}(u').
\end{equation}
But $\rho_{t_k} \to \rho_0$, uniformly on $\Omega$ implies that $\bla \rho_{t_k} \bra \to \bla \rho_{0} \bra$, in $\kno$, and hence $h_{\sbla \rho_{t_k} \sbra }(v) \to h_{\sbla \rho_{0} \sbra }(v)$, which with \eqref{4.12} shows that $h_{\sbla \rho_{0} \sbra }(v)=(u' \cdot v)\,\rho_{0}(u')$. But this means that  $\rho_0(u')u' \in H_{\sbla \rho_{0} \sbra}(v)=\{x\in\rn : x \cdot v =h_{\sbla \rho_{0} \sbra}(v)\}$.
But $\rho_0(u')u' \in \bla \rho_{0} \bra$, so  $\rho_0(u')u' \in \partial \bla \rho_{0} \bra$,
and hence $u'\in \balpha^*_{\sbla \rho_{0} \sbra}(v)=\{  \alpha^*_{\sbla \rho_{0} \sbra}(v)  \}$. But from \eqref{m111}, we know $ \alpha^*_{\sbla \rho_{0} \sbra}(v)=u_0$, and thus $u'=u_0$. This establishes \eqref{hatter}.

From \eqref{2.1-1} we see that, for all $t$,
\begin{equation}\label{hat}
h_{\sbla \rho_{0} \sbra}(v) \ge (u_t\cdot v)\,\rho_{\sbla \rho_{0} \sbra}(u_t).
\end{equation}
Since $\bla \rho_{0} \bra={\conv}\{ \rho_0(u)u : u\in \Omega\}$,
we have
\begin{equation}  \label{m12h}
\rho_{\sbla \rho_{0} \sbra}(u_t) \ge \rho_0(u_t).
\end{equation}

From \eqref{m3h}, \eqref{hat}, \eqref{m12h},
and \eqref{zx}, we have
\begin{equation}  \label{m1h}
\begin{split}
 \log h_{\sbla \rho_{t} \sbra}(v)-\log h_{\sbla \rho_{0} \sbra}(v)&=\log \rho_t(u_t)+ \log (u_t\cdot v)-\log h_{\sbla \rho_{0} \sbra}(v) \\
 &\leq\log \rho_t(u_t)-\log \rho_{\sbla \rho_{0} \sbra}(u_t)\\
 &\leq\log \rho_t(u_t)-\log \rho_0(u_t)\\
 &= tg(u_t) + o(t,u_t).
 \end{split}
 \end{equation}

Since $\bla \rho_{t} \bra={\conv}\{\rho_t(u)u: u\in \Omega\}$, it follows that $\rho_{\sbla \rho_{t} \sbra}(u) \ge \rho_t(u)$, for all $u\in\Omega$.
From \eqref{m3h}, followed by
the fact that
$h_{\sbla \rho_{t} \sbra}(v) \ge  (u_0 \cdot v) \rho_t(u_0) $
from
\eqref{m3h}, and \eqref{zx}, we get
\begin{equation}\label{m2h}
\begin{split}
\log h_{\sbla \rho_{t} \sbra}(v)-\log h_{\sbla \rho_{0} \sbra}(v)
&=\log h_{\sbla \rho_{t} \sbra}(v)-\log \rho_0(u_0)-\log (u_0 \cdot v) \\
&\geq\log \rho_t(u_0)-\log \rho_0(u_0)\\
&= tg(u_0) + o(t,u_0).
 \end{split}
 \end{equation}

Let $M_0=\max_{u\in\Omega} |g(u)|$. Since $o(t,\cdot)/t \to 0$, as $t\to 0$, uniformly on $\Omega$, we may choose $\delta_0>0$ so that for all
$t\in (-\delta_0, \delta_0)$,
we have
$|o(t,\cdot)| \le |t|$ on $\Omega$. From \eqref{m1h} and  \eqref{m2h}, we immediately see that
\begin{equation}\label{add}
|\log h_{\sbla \rho_{t} \sbra } (v) - \log h_{\sbla \rho_{0} \sbra}(v)| \le (M_0+1) |t|.
 \end{equation}

 Combining \eqref{m1h} and  \eqref{m2h}, we have
 \[
 0\le \log h_{\sbla \rho_{t} \sbra}(v)-\log h_{\sbla \rho_{0} \sbra} (v) - t g(u_0) - o(t, u_0)
 \le t(g(u_t) - g(u_0)) + o(t,u_t) - o(t, u_0).
 \]
When $t>0$, this gives
\[
\frac{o(t, u_0)}t  \le
 \frac{\log h_{\sbla \rho_{t} \sbra}(v)-\log h_{\sbla \rho_{0} \sbra} (v)}t - g(u_0)
 \le g(u_t) - g(u_0) + \frac{o(t,u_t)}t.
\]
From \eqref{hatter} and the continuity of $g$, we can conclude that
\begin{equation}\label{limup}
\lim_{t \to 0^+}  \frac{\log h_{\sbla \rho_{t} \sbra}(v)-\log h_{\sbla \rho_{0} \sbra} (v)}t =  g(u_0).
\end{equation}
On the other hand, when $t<0$, we have
\[
\frac{o(t, u_0)}t
\ge \frac{\log h_{\sbla \rho_{t} \sbra}(v)-\log h_{\sbla \rho_{0} \sbra} (v)}t - g(u_0)
\ge g(u_t) - g(u_0) + \frac{o(t,u_t)}t,
\]
from which we can also conclude that,
\begin{equation}\label{limup}
\lim_{t \to 0^{-}}  \frac{\log h_{\sbla \rho_{t} \sbra}(v)-\log h_{\sbla \rho_{0} \sbra} (v)}t =  g(u_0).
\end{equation}
Together with \eqref{m111}, we now obtain the desired result,
\[
\lim_{t\to 0} \frac{\log h_{\sbla \rho_{t} \sbra}(v) - \log h_{\sbla \rho_{0} \sbra}(v)} t = g(u_0)=g(\alpha^*_{\sbla \rho_{0} \sbra}(v)).
\]

Now \eqref{add} holds for all
$v\in \sn\setminus \eta_{\sbla \rho_{0} \sbra}$; i.e. almost everywhere on $\sn$, with respect to spherical Lebesgue measure. Since the support functions in \eqref{add} are continuous on $\sn$, it follows that  \eqref{add} holds for all
$v\in \sn$, and gives \eqref{4-bound}.
\end{proof}

\begin{lemm}
Suppose
$\Omega \subset \sn$ is a closed set that is not contained in any closed hemisphere of $\sn$.
Suppose $\rho_0 : \Omega \to (0,\infty)$, and $g: \Omega \to \mathbb R$ are continuous.
Let $\bla \rho_{t} \bra$ be a logarithmic family of convex hulls of $(\rho_0, g)$, then,
for $q\in \mathbb R$,
\begin{equation}\label{zx4}
\lim_{t\to 0} \frac{h_{\sbla \rho_{t} \sbra}^{-q}(v) - h_{\sbla \rho_{0} \sbra}^{-q}(v)} t
=-q h_{\sbla \rho_{0} \sbra}^{-q}(v)
g(\alpha_{\sbla \rho_{0} \sbra}^{*}(v)),
\end{equation}
for all $v\in \sn\setminus \eta_{\sbla \rho_{0} \sbra}$.
Moreover, there exists
a $\delta_0 > 0$ and an $M>0$ so that
\begin{equation}\label{h-q-bound}
|h_{\sbla \rho_{t} \sbra}^{-q}(v) - h_{\sbla \rho_{0} \sbra}^{-q}(v)| \le M |t|,
\end{equation}
for all $v\in \sn$ and all $t\in (-\delta_0, \delta_0)$.
\end{lemm}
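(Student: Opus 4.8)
The plan is to obtain this lemma as an immediate consequence of Lemma \ref{lemma4.1}, by writing $h_{\sbla \rho_{t} \sbra}^{-q}(v)=\exp\!\big(-q\log h_{\sbla \rho_{t} \sbra}(v)\big)$ and composing the (already established) differentiability of $t\mapsto \log h_{\sbla \rho_{t} \sbra}(v)$ at $t=0$ with the everywhere-smooth map $s\mapsto e^{-qs}$. Before doing this I would first record a uniform bound on the exponents: by \eqref{4-bound} there are $\delta_0>0$ and $M_1>0$ with $|\log h_{\sbla \rho_{t} \sbra}(v)-\log h_{\sbla \rho_{0} \sbra}(v)|\le M_1|t|$ for all $v\in\sn$ and all $t\in(-\delta_0,\delta_0)$; since $v\mapsto\log h_{\sbla \rho_{0} \sbra}(v)$ is continuous on the compact set $\sn$ it is bounded by some $L$, and hence $|\log h_{\sbla \rho_{t} \sbra}(v)|\le L+M_1\delta_0=:L'$ uniformly in $v$ and in $t\in(-\delta_0,\delta_0)$.

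For the bound \eqref{h-q-bound} I would use the elementary Lipschitz estimate $|e^a-e^b|\le e^{\max\{|a|,|b|\}}\,|a-b|$ (from the mean value theorem). Taking $a=-q\log h_{\sbla \rho_{t} \sbra}(v)$ and $b=-q\log h_{\sbla \rho_{0} \sbra}(v)$, both of absolute value at most $|q|L'$, this gives
\[
|h_{\sbla \rho_{t} \sbra}^{-q}(v)-h_{\sbla \rho_{0} \sbra}^{-q}(v)|\le e^{|q|L'}|q|\,|\log h_{\sbla \rho_{t} \sbra}(v)-\log h_{\sbla \rho_{0} \sbra}(v)|\le e^{|q|L'}|q|M_1\,|t|,
\]
so \eqref{h-q-bound} holds with $M=e^{|q|L'}|q|M_1$ (and with any $M>0$ when $q=0$, since then $h^{-q}\equiv 1$).

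For the limit \eqref{zx4} I would fix a regular normal $v\in\sn\setminus\eta_{\sbla \rho_{0} \sbra}$. By Lemma \ref{lemma4.1}, $t\mapsto\log h_{\sbla \rho_{t} \sbra}(v)$ is differentiable at $t=0$ with derivative $g(\alpha^*_{\sbla \rho_{0} \sbra}(v))$, so by the chain rule $t\mapsto h_{\sbla \rho_{t} \sbra}^{-q}(v)=\exp\!\big(-q\log h_{\sbla \rho_{t} \sbra}(v)\big)$ is differentiable at $t=0$ with derivative $-q\,e^{-q\log h_{\sbla \rho_{0} \sbra}(v)}\,g(\alpha^*_{\sbla \rho_{0} \sbra}(v))=-q\,h_{\sbla \rho_{0} \sbra}^{-q}(v)\,g(\alpha^*_{\sbla \rho_{0} \sbra}(v))$, which is exactly \eqref{zx4}; equivalently, one writes the difference quotient as $\big(e^{-q\log h_{\sbla \rho_{t} \sbra}(v)}-e^{-q\log h_{\sbla \rho_{0} \sbra}(v)}\big)/t$ and passes to the limit using continuity of $\exp$ and \eqref{radial-variation1}. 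I do not expect a genuine obstacle here; the only point requiring a little care is the uniform-in-$v$ control of $\log h_{\sbla \rho_{t} \sbra}(v)$, which is why I extract it from \eqref{4-bound} and the compactness of $\sn$ at the outset, so that the Lipschitz constant for the exponential can be chosen independently of $v$.
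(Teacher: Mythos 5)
Your proposal is correct and follows essentially the same route as the paper: the limit \eqref{zx4} is obtained by composing the differentiability of $t\mapsto\log h_{\sbla \rho_{t} \sbra}(v)$ from Lemma \ref{lemma4.1} with the exponential, and the bound \eqref{h-q-bound} is deduced from \eqref{4-bound} together with a uniform Lipschitz estimate for $s\mapsto e^{-qs}$ on a bounded range. The only cosmetic difference is that you get that Lipschitz estimate from the mean value theorem, while the paper uses the elementary inequality $|s-1|\le M_1|\log s|$ applied to the ratio $h^{-q}_{\sbla \rho_{t} \sbra}/h^{-q}_{\sbla \rho_{0} \sbra}$; both rest on the same uniform boundedness of $h_{\sbla \rho_{t} \sbra}$ near $t=0$.
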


\begin{proof}
Obviously,
\begin{equation*}
\lim_{t\to 0} \frac{h_{\sbla \rho_{t} \sbra}^{-q}(v) - h_{\sbla \rho_{0} \sbra}^{-q}(v)} t
=-q h_{\sbla \rho_{0} \sbra}^{-q}(v) \lim_{t\to 0} \frac{\log h_{\sbla \rho_{t} \sbra}(v) - \log h_{\sbla \rho_{0} \sbra}(v)} t,
\end{equation*}
provided the limit on the right exists. Thus, Lemma \ref{lemma4.1} gives \eqref{zx4}.

Since $\sbla \rho_{0} \sbra$ is a convex body in $\kno$ and $\sbla \rho_{t} \sbra \to
\sbla \rho_{0} \sbra$ as $t\to 0$,  there exist $m_0, m_1 \in (0,\infty)$ and $\delta_1>0$
so that
\[
\text{$0< m_0 < h_{\sbla \rho_{t} \sbra} < m_1$,\quad on $\sn$,}
\]
for each $t\in (-\delta_1, \delta_1)$.
From this it follows that there exists $M_1 > 1$ so that
\[
\text{$0< \frac{h^{-q}_{\sbla \rho_{t} \sbra} } {h^{-q}_{\sbla \rho_{0} \sbra}}  < M_1$,\quad on $\sn$.}
\]
It is easily seen that $s-1 \ge \log s$ whenever $s\in (0,1)$ while $s-1 \le M_1 \log s$ whenever
$s \in [1,M_1]$. Thus,
\[
\text{$|s-1| \le M_1 |\log s|$, \quad when $s\in (0,M_1)$.}
\]
It follows that
\[
\Big|   \frac{h^{-q}_{\sbla \rho_{t} \sbra} } {h^{-q}_{\sbla \rho_{0} \sbra}} -1   \Big|
\le M_1 \Big|\log   \frac{h^{-q}_{\sbla \rho_{t} \sbra} } {h^{-q}_{\sbla \rho_{0} \sbra}}      \Big|,
\]
that is,
\[
\text{
$|h_{\sbla \rho_{t} \sbra}^{-q} - h_{\sbla \rho_{0} \sbra}^{-q}|
\le
|q| h_{\sbla \rho_{0} \sbra}^{-q} M_1 |\log h_{\sbla \rho_{t} \sbra }  - \log h_{\sbla \rho_{0} \sbra}|
\le
|q| \frac{M_1}{m_0^{q}} |\log h_{\sbla \rho_{t} \sbra }  - \log h_{\sbla \rho_{0} \sbra}|
$,
\ on $\sn$,}
\]
whenever $t\in (-\delta_1, \delta_1)$.
This and \eqref{4-bound} give \eqref{h-q-bound}.
\end{proof}

The derivative of radial functions of Wulff shapes is contained in the following lemma.

\begin{lemm}\label{h-W-var}
Suppose $\Omega \subset \sn$ is a closed set not contained in any closed hemisphere of $\sn$.
Let both $h_0 : \Omega \to (0,\infty)$ and $f: \Omega \to \mathbb R$ be continuous.
If $\blb h_t \brb$ is a logarithmic family of
Wulff shapes associated with $(h_0, f)$, where
\[
\log h_t(v) = \log h_0(v) + t f(v) +o(t,v),
\]
for $v\in\Omega$. Then for almost all (with respect to spherical Lebesgue measure) $u\in S^{n-1}$,
\[
\lim_{t\to 0} \frac{\log\rho_{\sblb h_t \sbrb}(u) - \log\rho_{\sblb h_0 \sbrb}(u)}t
= f(\alpha_{\sblb h_0 \sbrb}(u)).
\]
\end{lemm}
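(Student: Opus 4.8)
The plan is to reduce the statement to Lemma~\ref{lemma4.1} through the polarity correspondence between Wulff shapes and convex hulls. Set $K_0=\blb h_0\brb\in\kno$. By Lemma~\ref{wulff-hull}, for every $t\in(-\delta,\delta)$ one has $\blb h_t\brb^{*}=\bla 1/h_t\bra$, and therefore, by \eqref{polar-identity},
\[
\log\rho_{\sblb h_t\sbrb}(u)=-\log h_{\sbla 1/h_t\sbra}(u),
\]
for all $u\in\sn$. Thus it is enough to compute the derivative at $t=0$ of $t\mapsto\log h_{\sbla 1/h_t\sbra}(u)$, i.e.\ of the support functions of a family of convex hulls.

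Next I would note that $\{1/h_t\}$ is itself a logarithmic family of convex hulls, generated by $(1/h_0,-f)$: negating $\log h_t=\log h_0+tf+o(t,\cdot)$ gives $\log(1/h_t)=\log(1/h_0)+t(-f)+(-o(t,\cdot))$, and the remainder $-o(t,v)$ is continuous in $v$ with $-o(t,v)/t\to 0$ uniformly on $\Omega$, which are precisely the hypotheses needed. Hence Lemma~\ref{lemma4.1} applies with $\rho_0$ replaced by $1/h_0$ and $g$ replaced by $-f$, and yields, for every regular normal $u$ of $\bla 1/h_0\bra$ --- that is, for all $u\in\sn\setminus\eta_{\sbla 1/h_0\sbra}$, a set of full spherical Lebesgue measure ---
\[
\lim_{t\to 0}\frac{\log h_{\sbla 1/h_t\sbra}(u)-\log h_{\sbla 1/h_0\sbra}(u)}{t}=-f\bigl(\alpha^{*}_{\sbla 1/h_0\sbra}(u)\bigr).
\]
Combining this with the displayed identity above gives $\lim_{t\to 0}t^{-1}\bigl(\log\rho_{\sblb h_t\sbrb}(u)-\log\rho_{K_0}(u)\bigr)=f\bigl(\alpha^{*}_{\sbla 1/h_0\sbra}(u)\bigr)$ for almost all $u\in\sn$. (If a Lipschitz-in-$t$ bound on $\log\rho_{\sblb h_t\sbrb}-\log\rho_{\sblb h_0\sbrb}$ is wanted for later use, it transfers from \eqref{4-bound} in exactly the same way.)

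It remains to identify the point $\alpha^{*}_{\sbla 1/h_0\sbra}(u)$. Since $\bla 1/h_0\bra=K_0^{*}$ by Lemma~\ref{wulff-hull} and $(K_0^{*})^{*}=K_0$ because $K_0\in\kno$, Lemma~\ref{measure-zero} gives $\alpha^{*}_{K_0^{*}}=\alpha_{(K_0^{*})^{*}}=\alpha_{K_0}$ spherical-Lebesgue-almost everywhere on $\sn$. Intersecting this full-measure set with $\sn\setminus\eta_{\sbla 1/h_0\sbra}$, we conclude that for almost all $u\in\sn$,
\[
\lim_{t\to 0}\frac{\log\rho_{\sblb h_t\sbrb}(u)-\log\rho_{\sblb h_0\sbrb}(u)}{t}=f\bigl(\alpha_{\sblb h_0\sbrb}(u)\bigr),
\]
which is the assertion. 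No step here is genuinely difficult; the only thing requiring care is the bookkeeping of the polarity dictionary (Lemma~\ref{wulff-hull}, Lemma~\ref{measure-zero}, and \eqref{polar-identity}) and checking that the reciprocal family meets the hypotheses of Lemma~\ref{lemma4.1}. All the substantive analytic work --- existence of the one-sided limits and the squeeze estimate --- is already packaged inside Lemma~\ref{lemma4.1}, so the main (and only) obstacle is making the polarity translation airtight, in particular that the relevant "bad" sets $\omega_{K_0}$, $\eta_{\sbla 1/h_0\sbra}$, and the exceptional set in Lemma~\ref{measure-zero} are all spherical-Lebesgue null.
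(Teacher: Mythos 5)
Your proposal is correct and follows essentially the same route as the paper's own proof: both pass to the reciprocal family $\rho_t=1/h_t$ (a logarithmic family of convex hulls generated by $(1/h_0,-f)$), invoke Lemma \ref{lemma4.1}, translate back via Lemma \ref{wulff-hull} and \eqref{polar-identity}, and identify $\alpha^*_{\sbla 1/h_0\sbra}$ with $\alpha_{\sblb h_0\sbrb}$ almost everywhere via Lemma \ref{measure-zero}. The bookkeeping of the null sets is handled correctly, so nothing is missing.
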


\begin{proof} Let $\rho_t = 1/h_t$.  Then
\[
\log \rho_t (v) = \log \rho_0(v) -tf(v) - o(t,v),
\]
and from Lemma \ref{lemma4.1} we know that,
\begin{equation}\label{zx-1}
\lim_{t\to 0} \frac{\log h_{\sbla \rho_{t} \sbra } (v) - \log h_{\sbla \rho_{0} \sbra}(v)} t = -f (\alpha_{\sbla \rho_{0} \sbra}^{*}(v)),
\end{equation}
for almost all (with respect to spherical Lebesgue measure) $v\in S^{n-1}$.
From Lemma \ref{wulff-hull} we have,
\begin{equation}\label{zx-2}
\blb h_t \brb = \bla \rho_t \bra^*.
\end{equation}
From  \eqref{zx-2} and \eqref{polar-identity} we have,
\begin{equation}\label{zx-3}
\log\rho_{\sblb h_t \sbrb} - \log\rho_{\sblb h_0 \sbrb} = -(\log h_{\sbla \rho_{t} \sbra} - \log h_{\sbla \rho_{0} \sbra}).
\end{equation}
But \eqref{zx-2}, when combined with Lemma \ref{measure-zero}, gives
\begin{equation}\label{zx-4}
\alpha^*_{\sbla \rho_{t} \sbra}=\alpha_{\sbla \rho_t \sbra^*}=\alpha_{\sblb h_t \sbrb},
\end{equation}
almost everywhere on $\sn$.

When \eqref{zx-1} is combined with \eqref{zx-3} and \eqref{zx-4}, we obtain the desired result.
\end{proof}

The following theorem gives the variational formula for a dual quermassintegral
in terms of its associated dual curvature measure and polar convex hull.

\begin{theo}\label{dual-volume-variation}
Suppose $\Omega \subset \sn$ is a closed set not contained in any closed hemisphere of $\sn$, and
$\rho_{0} : \Omega \to (0,\infty)$  and $g: \Omega \to \mathbb R$ are continuous. If $\bla \rho_{t} \bra$ is a logarithmic family of convex hulls of $(\rho_{0}, g)$,
then for $q\neq 0$,
\begin{align*}
\lim_{t\to 0} \frac{\wt V_q(\bla \rho_t \bra^*) -\wt V_q(\bla \rho_{0} \bra^*)}t
&= - q \int_{\Omega} g(u) \, d\wt C_q(\bla \rho_{0} \bra^*, u),  \\
\shortintertext{and}
\lim_{t\to 0} \frac{\log \bar V_0(\bla \rho_t \bra^*) -\log \bar V_0(\bla \rho_{0} \bra^*)}t
&= -\frac 1{\omega_n}\int_{\Omega} g(u) \, d\wt C_0(\bla \rho_{0} \bra^*, u),\\
\shortintertext{or equivalently, for each $q\in \mathbb R,$}
\frac{d}{dt} \log \bar V_q(\bla \rho_t \bra^*)\Big|_{t=0}
&= - \frac{1}{\wt V_q(\bla \rho_{0} \bra^*)}
\int_{\Omega} g(u) \, d\wt C_q(\bla \rho_{0} \bra^*, u).
\end{align*}
\end{theo}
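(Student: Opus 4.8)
The plan is to transfer everything to the logarithmic family of Wulff shapes $\blb 1/\rho_t\brb$ and then differentiate under the integral sign in the representation $\wt V_q(L) = \tfrac1n\int_{\sn}\rho_L^q(u)\,du$. First I would use Lemma~\ref{wulff-hull} to identify $\bla\rho_t\bra^* = \blb 1/\rho_t\brb$, and observe that since $\log(1/\rho_t) = \log(1/\rho_0) - tg - o(t,\cdot)$, the family $\blb 1/\rho_t\brb$ is a logarithmic family of Wulff shapes associated with $(1/\rho_0,-g)$. Lemma~\ref{h-W-var} (taken with $f=-g$ and $h_0 = 1/\rho_0$) then gives, for spherical-Lebesgue-almost-all $u\in\sn$,
\[
\lim_{t\to 0}\frac{\log\rho_{\sbla\rho_t\sbra^*}(u) - \log\rho_{\sbla\rho_0\sbra^*}(u)}{t} = -\,g\bigl(\alpha_{\sbla\rho_0\sbra^*}(u)\bigr),
\]
from which the a.e.\ pointwise derivative of $\rho_{\sbla\rho_t\sbra^*}^q(u)$ at $t=0$ is $-q\,\rho_{\sbla\rho_0\sbra^*}^q(u)\,g(\alpha_{\sbla\rho_0\sbra^*}(u))$ when $q\neq0$, while for $q=0$ one keeps the logarithmic quotient itself.

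Next I would justify passing the limit inside the integral. Since $\rho_{\sbla\rho_t\sbra^*}^{q} = h_{\sbla\rho_t\sbra}^{-q}$ by \eqref{polar-identity}, the uniform bound $|h_{\sbla\rho_t\sbra}^{-q}(v) - h_{\sbla\rho_0\sbra}^{-q}(v)|\le M|t|$ from the lemma preceding Lemma~\ref{h-W-var} shows that the difference quotients $t^{-1}\bigl(\rho_{\sbla\rho_t\sbra^*}^q - \rho_{\sbla\rho_0\sbra^*}^q\bigr)$ are dominated by the constant $M$ for small $t$; likewise \eqref{4-bound} gives $|\log\rho_{\sbla\rho_t\sbra^*}(v) - \log\rho_{\sbla\rho_0\sbra^*}(v)|\le M|t|$ to handle the $q=0$ case. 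Applying the dominated convergence theorem to $\wt V_q(\bla\rho_t\bra^*) = \tfrac1n\int_{\sn}\rho_{\sbla\rho_t\sbra^*}^q$, respectively to $\log\bar V_0(\bla\rho_t\bra^*) = \tfrac1{n\omega_n}\int_{\sn}\log\rho_{\sbla\rho_t\sbra^*}$, then yields
\[
\lim_{t\to0}\frac{\wt V_q(\bla\rho_t\bra^*) - \wt V_q(\bla\rho_0\bra^*)}{t}
= -\frac qn\int_{\sn}\rho_{\sbla\rho_0\sbra^*}^q(u)\,g\bigl(\alpha_{\sbla\rho_0\sbra^*}(u)\bigr)\,du
\]
for $q\neq0$, and the analogous identity with right-hand side $-\tfrac1{n\omega_n}\int_{\sn}g(\alpha_{\sbla\rho_0\sbra^*}(u))\,du$ for $\log\bar V_0$.

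Finally I would recognize the right-hand side as an integral against the dual curvature measure. After extending $g$ to a continuous $\bar g$ on $\sn$ via Tietze's theorem, formula \eqref{cur-m-int} applied to $\bla\rho_0\bra^*\in\kno$ gives $\tfrac1n\int_{\sn}\bar g(\alpha_{\sbla\rho_0\sbra^*}(u))\rho_{\sbla\rho_0\sbra^*}^q(u)\,du = \int_{\sn}\bar g\,d\wt C_q(\bla\rho_0\bra^*,\cdot)$. The lemma asserting that a regular normal $v$ of $\bla\rho_0\bra$ satisfies $\balpha^*_{\sbla\rho_0\sbra}(v)\subset\Omega$, combined with Lemma~\ref{measure-zero} (so $\alpha_{\sbla\rho_0\sbra^*} = \alpha^*_{\sbla\rho_0\sbra}$ a.e.), shows $\alpha_{\sbla\rho_0\sbra^*}(u)\in\Omega$ for a.e.\ $u$; hence $\bar g\circ\alpha_{\sbla\rho_0\sbra^*} = g\circ\alpha_{\sbla\rho_0\sbra^*}$ a.e.\ and, by \eqref{rot}, the measure $\wt C_q(\bla\rho_0\bra^*,\cdot)$ is concentrated on $\Omega$, so the integral above equals $\int_\Omega g\,d\wt C_q(\bla\rho_0\bra^*,\cdot)$. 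This gives the first two displayed formulas of the theorem. For the ``equivalently'' formula, I would write $q\log\bar V_q(L) = \log\wt V_q(L) - \log\omega_n$ for $q\neq 0$ and $\log\bar V_0(L) = \tfrac1{n\omega_n}\int_{\sn}\log\rho_L$ with $\wt V_0(L)=\omega_n$ for $q=0$, then differentiate and substitute the first two formulas to obtain the single unified identity.

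The only genuinely analytic point is the interchange of limit and integral in the middle step; I expect that to require the most care, but it is dispatched cleanly by the uniform Lipschitz-in-$t$ estimates already proved in the two lemmas immediately preceding this theorem, so the rest of the argument is algebraic bookkeeping together with a bit of care that $g$, defined only on $\Omega$, may be replaced by its Tietze extension in \eqref{cur-m-int}.
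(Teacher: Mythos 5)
Your proposal is correct and follows essentially the same route as the paper: the paper also reduces to the pointwise a.e.\ differentiability of $h_{\sbla \rho_{t} \sbra}^{-q}=\rho_{\sbla \rho_{t} \sbra^*}^{q}$ (via Lemma \ref{lemma4.1} and \eqref{zx4}, of which Lemma \ref{h-W-var} is just the polar repackaging you invoke), passes to the limit using the uniform Lipschitz bounds \eqref{4-bound} and \eqref{h-q-bound} with bounded convergence, and identifies the resulting integral via \eqref{cur-m-int}, a Tietze extension of $g$, and the inclusion $\balpha_{\sbla \rho_{0} \sbra^*}(\sn\setminus\eta_{\sbla \rho_{0} \sbra})\subset\Omega$. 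Your observation that $\wt C_q(\bla \rho_0\bra^*,\cdot)$ is concentrated on $\Omega$ is the same fact the paper encodes by inserting $\mathbbm{1}_\Omega$ into the integrand.
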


\begin{proof}
Abbreviate $\eta_{\sbla \rho_{0} \sbra}$ by $\eta_0$.
Recall that $\eta_0$ is a set of
spherical Lebesgue measure zero, that
consist of the complement, in $\sn$, of the regular normal vectors of the convex body
$\bla \rho_{0} \bra=\conv\{ \rho_{0}(u)u: u\in \Omega\}$. Recall also that
the continuous function
$\alpha_{\sbla \rho_{0} \sbra}^*: \sn \setminus \eta_0 \to \sn$ is well defined by $\alpha_{\sbla \rho_{0} \sbra}^*(v) \in \balpha_{\sbla \rho_{0} \sbra}^*(v)$, for all $v\in \sn \setminus \eta_0$.

Suppose $v\in \sn \setminus \eta_0$. To see that $\{\alpha_{\sbla \rho_{0} \sbra}^*(v)\}= \balpha_{\sbla \rho_{0} \sbra}^*(v) \subset \Omega$, let
\[
h_{\sbla \rho_{0} \sbra}(v) =\max\nolimits_{u\in\Omega}\, \rho_0(u) u \cdot v = \rho_0(u_0) u_0 \cdot v,
\]
for some $u_0\in\Omega$. But this means that
\[
\rho_0(u_0) u_0 \in H_{\sbla \rho_{0} \sbra}(v),
\]
and hence $\rho_0(u_0) u_0 \in \partial\bla \rho_{0} \bra$ because it belongs to $H_{\sbla \rho_{0} \sbra}(v)$. But  $v$ is a regular normal vector of $\bla \rho_{0} \bra$, and thus $\alpha_{\sbla \rho_{0} \sbra}^*(v)=u_0\in\Omega$.
Thus,
\begin{equation}\label{zx5}
\balpha_{\sbla \rho_{0} \sbra}^*(\sn\setminus\eta_0) \subset \Omega.
\end{equation}
But \eqref{zx5} and Lemma \ref{measure-zero-bold}, now yield the fact that
\begin{equation}\label{niid}
\balpha_{\sbla \rho_{0} \sbra^*}(\sn\setminus\eta_0) \subset \Omega.
\end{equation}
Since $\Omega$ is closed, by the Tietze extension theorem
we can extend $g: \Omega \to \mathbb R$ to a continuous $\bar g: \sn \to \mathbb R$.
Therefore, using \eqref{niid} we wee that,
\begin{equation}\label{4.1-1}
g(\alpha_{\sbla \rho_{0} \sbra^*}(v)) = (\bar g  \mathbbm{1}_\Omega)(\alpha_{\sbla \rho_{0} \sbra^*}(v)),
\end{equation}
for $v\in \sn\setminus\eta_0$.

From \eqref{def-quermass-q} and \eqref{def-quermass-q-normalized-zero-v},
\eqref{polar-identity},
the fact that $\eta_0$ has measure $0$,
\eqref{h-q-bound} and the uniformly bounded convergence theorem,
\eqref{zx4} and Lemma \ref{measure-zero},
\eqref{polar-identity},
\eqref{4.1-1},
and
\eqref{cur-m-int},
we have
\begin{align*}
\lim_{t\to 0} \frac{\wt V_q(\bla \rho_{t} \bra^*) -\wt V_q(\bla \rho_{0} \bra^*)}t
&=\lim_{t\to 0}\frac1n\int_{\sn}
\frac{\rho_{\sbla \rho_{t} \sbra^*}^{q}(v) - \rho_{\sbla \rho_{0} \sbra^*}^{q}(v)} t\, dv \\
&=\lim_{t\to 0}\frac1n\int_{\sn}
\frac{h_{\sbla \rho_{t} \sbra}^{-q}(v) - h_{\sbla \rho_{0} \sbra}^{-q}(v)} t\, dv \\
&=\frac1n\int_{\sn\setminus \eta_0}  \lim_{t\to 0}
\frac{h_{\sbla \rho_{t} \sbra}^{-q}(v) - h_{\sbla \rho_{0} \sbra}^{-q}(v)} t\, dv \\
&=-\frac q n \int_{\sn\setminus \eta_0} g(\alpha^*_{\sbla \rho_{0} \sbra}(v)) h_{\sbla \rho_{0} \sbra}^{-q}(v)\, dv \\
&=-\frac q n \int_{\sn\setminus \eta_0} g(\alpha_{\sbla \rho_{0} \sbra^*}(v)) \rho_{\sbla \rho_{0} \sbra^*}^{q}(v)\, dv \\
&=-\frac q n \int_{\sn} (\bar g \mathbbm{1}_\Omega)(\alpha_{\sbla \rho_{0} \sbra^*}(v)) \rho_{\sbla \rho_{0} \sbra^*}^{q}(v)\, dv\\
&=-q \int_{\sn} (\bar g \mathbbm{1}_\Omega)(u)\, d\wt C_q(\bla \rho_{0} \bra^*, u) \\
&=-q \int_{\Omega} g(u)\, d\wt C_q(\bla \rho_{0} \bra^*, u).
\end{align*}

From  \eqref{def-quermass-q-normalized-zero}
and \eqref{def-quermass-q-normalized-zero-v},
\eqref{polar-identity},
the fact that $\eta_0$ has measure $0$ together with Lemma \ref{lemma4.1},
Lemma \ref{measure-zero},
and \eqref{4.1-1},
and
\eqref{cur-m-int},
we have
\begin{align*}
\lim_{t\to 0} \frac{\log \bar V_0(\bla \rho_{t} \bra^*) -\log \bar V_0(\bla \rho_{0} \bra^*)}t
 &=\lim_{t\to 0}\frac1{n\omega_n}\int_{\sn}
\frac{\log \rho_{\sbla \rho_{t} \sbra^*}(v) - \log \rho_{\sbla \rho_{0} \sbra^*}(v)}t \, dv \\
 &=-\lim_{t\to 0}\frac1{n\omega_n}\int_{\sn}
\frac{\log h_{\sbla \rho_{t} \sbra}(v) - \log h_{\sbla \rho_{0} \sbra}(v)}t \, dv \\
&=-\frac1{n\omega_n}\int_{\sn\setminus \eta_0} g(\alpha_{\sbla \rho_{0} \sbra}^{*}(v))\, dv \\
&=-\frac1{n\omega_n}\int_{\sn\setminus \eta_0} g(\alpha_{\sbla \rho_{0} \sbra^*}(v))\, dv \\
&=-\frac1{n\omega_n}\int_{\sn} (\bar g \mathbbm{1}_\Omega)(\alpha_{\sbla \rho_{0} \sbra^*}(v))\, dv \\
&=-\frac1{\omega_n} \int_{\sn} (\bar g \mathbbm{1}_\Omega)(u) \, d\wt C_0(\bla \rho_{0} \bra^*,u) \\
&=-\frac1{\omega_n}\int_{\Omega} g(u)\, d\wt C_0(\bla \rho_{0} \bra^*, u).
\end{align*}
\end{proof}

The following theorem gives the variational formula for a dual quermassintegral
in terms of its associated dual curvature measure and Wulff shapes.

\begin{theo}\label{dual-volume-variation-w}
Suppose $\Omega \subset \sn$ is a closed set not
contained in any closed hemisphere of $\sn$. If
$h_0 : \Omega \to (0,\infty)$ and $f: \Omega \to \rbo$ are continuous,
and $\blb h_t \brb$ is a logarithmic family of Wulff shapes associated with $(h_0, f)$,
then, for $q\neq 0$,
\begin{align*}
\lim_{t\to 0} \frac{\wt V_q(\blb h_t \brb) -\wt V_q(\blb h_0 \brb)}t
&= q \int_{\Omega} f(v) \, d\wt C_q(\blb h_0 \brb, v), \\
\shortintertext{and,}\\
\lim_{t\to 0} \frac{\log \bar V_0(\blb h_t \brb) -\log \bar V_0(\blb h_0 \brb)}t
&= \frac 1{\omega_n}\int_{\Omega} f(v) \, d\wt C_0(\blb h_0 \brb, v),
\shortintertext{or equivalently, for all $q\in \mathbb R$,}\\
\frac{d}{dt} \log \bar V_q(\blb h_t \brb)\Big|_{t=0}
&= \frac{1}{\wt V_q(\blb h_0 \brb)}
\int_{\Omega} f(v) \, d\wt C_q(\blb h_0 \brb, v).
\end{align*}
\end{theo}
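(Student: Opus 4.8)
The plan is to deduce this theorem directly from Theorem \ref{dual-volume-variation} by invoking the polar duality between Wulff shapes and convex hulls provided by Lemma \ref{wulff-hull}. The key point is that a logarithmic family of Wulff shapes becomes, upon passing to polars, a logarithmic family of convex hulls, so none of the delicate differentiation-under-the-integral work (Lemma \ref{lemma4.1} and the bounded-convergence argument in Theorem \ref{dual-volume-variation}) needs to be repeated; only a sign change and a relabeling of the integration variable are involved.

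First I would set $\rho_0 = 1/h_0$ and $\rho_t = 1/h_t$. Taking reciprocals in $\log h_t(v) = \log h_0(v) + tf(v) + o(t,v)$ gives $\log \rho_t(u) = \log \rho_0(u) + t(-f)(u) + (-o(t,u))$, where $-o(t,u)$ is continuous in $u\in\Omega$ and $-o(t,u)/t\to 0$ uniformly on $\Omega$. Hence $\bla \rho_t \bra$ is a logarithmic family of convex hulls of $(\rho_0, g)$ with $g = -f$, and Lemma \ref{wulff-hull} gives $\blb h_t \brb = \bla \rho_t \bra^*$ for every $t\in(-\delta,\delta)$, in particular $\blb h_0 \brb = \bla \rho_0 \bra^*$. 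Applying Theorem \ref{dual-volume-variation} to this family with $g = -f$, the case $q\neq 0$ yields
\[
\lim_{t\to 0} \frac{\wt V_q(\bla \rho_t \bra^*) - \wt V_q(\bla \rho_0 \bra^*)}{t} = -q\int_\Omega (-f)(u)\, d\wt C_q(\bla \rho_0 \bra^*, u) = q\int_\Omega f(v)\, d\wt C_q(\blb h_0 \brb, v),
\]
and substituting $\blb h_t \brb = \bla \rho_t \bra^*$ gives the first displayed formula. The $q = 0$ formula follows in exactly the same way from the second formula of Theorem \ref{dual-volume-variation}.

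For the unified statement I would use the identity $\log \bar V_q(K) = \tfrac1q\big(\log \wt V_q(K) - \log \omega_n\big)$ for $q\neq 0$, which is immediate from \eqref{def-quermass-q-normalized} and \eqref{def-quermass-q-normalized-zero-v}; differentiating at $t = 0$ gives
\[
\frac{d}{dt}\log \bar V_q(\blb h_t \brb)\Big|_{t=0} = \frac{1}{q\,\wt V_q(\blb h_0 \brb)}\,\frac{d}{dt}\wt V_q(\blb h_t \brb)\Big|_{t=0} = \frac{1}{\wt V_q(\blb h_0 \brb)}\int_\Omega f(v)\, d\wt C_q(\blb h_0 \brb, v),
\]
while for $q = 0$ one has $\wt V_0(\blb h_0 \brb) = \wt C_0(\blb h_0 \brb, \sn) = \omega_n$ by \eqref{d-total-q}, so the $q=0$ formula is the same identity. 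I do not anticipate any genuine obstacle here: the substantive analytic content is already packaged in Lemma \ref{lemma4.1} and Theorem \ref{dual-volume-variation}, and the only care required is to confirm that the perturbation $-o(t,v)$ retains the admissibility conditions and to track the sign flip $f\mapsto -f$.
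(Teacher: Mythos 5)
Your proposal is correct and is essentially identical to the paper's own proof: the paper also sets $\rho_t = 1/h_t$, observes that $\bla\rho_t\bra$ is the logarithmic family of convex hulls of $(\rho_0,-f)$, invokes Lemma \ref{wulff-hull} to get $\blb h_t\brb = \bla\rho_t\bra^*$, and then cites Theorem \ref{dual-volume-variation}. Your extra verification of the normalized ``or equivalently'' form via $\log\bar V_q = \tfrac1q(\log\wt V_q-\log\omega_n)$ is a correct filling-in of a step the paper leaves implicit.
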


\begin{proof}
The logarithmic family of Wulff shapes $\blb h_t \brb$ is defined
as the Wulff shape of $h_t$, where $h_t$ is given by
\[
\log h_t = \log h_0 + t f +o(t,\cdot).
\]
Let $\rho_t = 1/h_t$. Then
\[
\log \rho_t = \log \rho_0 - t f - o(t,\cdot).
\]
Let $\bla \rho_{t} \bra$ be the logarithmic family of convex hulls associated with $(\rho_0, -f)$.
But from Lemma \ref{wulff-hull} we know that
\[
\blb h_t \brb = \bla \rho_{t} \bra^*,
\]
and the lemma now follows from Lemma \ref{dual-volume-variation}.
\end{proof}

The variational formulas above imply variational formulas for
dual quermassintegrals of convex hull perturbations of a convex body
in terms of dual curvature measures.

Suppose $K\in \kno$ and $f: \sn \to \mathbb R$ is continuous.
We shall write $\blb K,f,t \brb$ for the
Wulff shape $\blb h_t \brb$ where $h_t$ is given by
\[
\log h_t = \log h_K + t f +o(t,\cdot).
\]
If $K\in \kno$ and $g: \sn \to \mathbb R$ is continuous,
we shall write $\bla K,g,t \bra$ for the convex hull
$\bla \rho_{t} \bra$, where $\rho_{t}$ is given by
\[
\log \rho_t = \log \rho_K + t g +o(t,\cdot).
\]

\begin{lemm}\label{d-area-var}
Suppose $K\in \kno$
and $g: \sn \to \mathbb R$ is continuous. Then, for $q\neq 0$,
\begin{align*}
\lim_{t\to 0} \frac{\wt V_q(\bla K^*, g, t \bra^*) -\wt V_q(K)}t
&= - q \int_{\sn} g(v) \, d\wt C_q(K, v), \\
\shortintertext{and}\\
\lim_{t\to 0} \frac{\log \bar V_0(\bla K^*, g, t \bra^*) -\log \bar V_0(K)}t
&= -\frac 1{\omega_n}\int_{\sn} g(v) \, d\wt C_0(K, v).\\
\shortintertext{or equivalently, for all $q\in \mathbb R$,}\\
\frac{d}{dt} \log \bar V_q(\bla K^*, g, t \bra^*)\Big|_{t=0}
&= - \frac{1}{\wt V_q(K)}
\int_{\sn} g(v) \, d\wt C_q(K, v).
\end{align*}
\end{lemm}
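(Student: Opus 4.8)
The plan is to obtain this lemma as an immediate corollary of Theorem \ref{dual-volume-variation}, applied with the closed set $\Omega$ taken to be all of $\sn$ and the initial radial function $\rho_0$ taken to be $\rho_{K^*}$. First I would check that this specialization is legitimate: since $K\in\kno$ we have $K^*\in\kno$, so $\rho_{K^*}:\sn\to(0,\infty)$ is continuous, and $\sn$ is trivially a closed subset of itself that is not contained in any closed hemisphere; together with the continuous $g:\sn\to\mathbb R$ this furnishes all the hypotheses of Theorem \ref{dual-volume-variation}. Moreover, by the definition recorded immediately before the present lemma, the logarithmic family of convex hulls generated by $(\rho_{K^*},g)$ is exactly $\bla K^*,g,t\bra$.

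Next I would identify the body $\bla\rho_0\bra^*$ occurring in the conclusion of Theorem \ref{dual-volume-variation}. Since $\rho_0=\rho_{K^*}$, equation \eqref{conv-of-body} gives $\bla\rho_0\bra=\bla\rho_{K^*}\bra=K^*$, hence $\bla\rho_0\bra^*=K^{**}=K$ by the bipolar relation for bodies in $\kno$. Therefore $\wt V_q(\bla\rho_0\bra^*)=\wt V_q(K)$, $\bar V_0(\bla\rho_0\bra^*)=\bar V_0(K)$, and $\wt C_q(\bla\rho_0\bra^*,\cdot)=\wt C_q(K,\cdot)$, and substituting these into the three displayed conclusions of Theorem \ref{dual-volume-variation} yields, line for line, the three displayed conclusions of the present lemma. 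For the unified (middle) identity one only needs the elementary fact $\wt V_0(K)=\wt W_n(K)=\tfrac1n\int_{\sn}1\,du=\omega_n$, so that the constant $\tfrac1{\omega_n}$ appearing in the $q=0$ case is precisely $\tfrac1{\wt V_0(K)}$; and the passage between $\bar V_q$ and $\wt V_q$ for $q\neq0$ is supplied by $\bar V_q(L)^q=\wt V_q(L)/\omega_n$, which follows at once from \eqref{def-quermass-q} and \eqref{def-quermass-q-normalized}.

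There is no genuine obstacle here: the lemma is a pure specialization of Theorem \ref{dual-volume-variation}, and the only care required is bookkeeping --- matching the notation $\bla K^*,g,t\bra$ (with its error term $o(t,\cdot)$) to the logarithmic family in the theorem, invoking $K^{**}=K$, and noting the trivial identity $\wt V_0(K)=\omega_n$. Should one prefer, the three conclusions can instead be read off directly from the single ``for all $q\in\mathbb R$'' form of Theorem \ref{dual-volume-variation} without any separate algebra.
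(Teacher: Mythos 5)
Your proposal is correct and is essentially identical to the paper's own proof, which likewise sets $\rho_0=1/h_K=\rho_{K^*}$ in Theorem \ref{dual-volume-variation} and uses \eqref{conv-of-body} together with $K^{**}=K$ to identify $\bla\rho_0\bra^*$ with $K$. Your extra bookkeeping (checking the hypotheses with $\Omega=\sn$ and noting $\wt V_0(K)=\omega_n$) is harmless and only makes explicit what the paper leaves implicit.
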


\begin{proof}
In Lemma \ref{dual-volume-variation}, let $\rho_0 = 1/h_K =\rho_{K^*}$. Then
$\bla \rho_{t} \bra^* = \bla K^*, g, t \bra^*$, and in particular, from \eqref{conv-of-body}
we have $\bla \rho_0 \bra^* = \bla \rho_{K^*} \bra^* = K^{**}=K$.
\end{proof}

The variational formulas of convex hulls above imply variational formulas
of Wulff shapes.

\begin{lemm}\label{dual-volume-variation-wulff}
Suppose $K\in \kno$
and $f: \sn \to \mathbb R$ is continuous. Then,  for $q\neq 0$,
\begin{align*}
\lim_{t\to 0} \frac{\wt V_q(\blb K,f,t \brb) -\wt V_q(K)}t
&= q \int_{\sn} f(v) \, d\wt C_q(K, v), \\
\shortintertext{and}\\
\lim_{t\to 0} \frac{\log \bar V_0(\blb K,f,t \brb) -\log \bar V_0(K)}t
&= \frac 1{\omega_n}\int_{\sn} f(v) \, d\wt C_0(K, v),\\
\shortintertext{or equivalently, for all $q\in \mathbb R$,}\\
\frac{d}{dt} \log \bar V_q(\blb K,f,t \brb) \Big|_{t=0}
&= \frac{1}{\wt V_q(K)}\int_{\sn} f(v) \, d\wt C_q(K, v).
\end{align*}
\end{lemm}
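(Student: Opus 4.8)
The plan is to obtain this lemma directly from Lemma \ref{d-area-var} by polar duality, in exactly the same way that Theorem \ref{dual-volume-variation-w} was deduced from Theorem \ref{dual-volume-variation}. First I would set $\rho_t = 1/h_t$, where $h_t$ is the function defining $\blb K,f,t\brb$, so that $\log h_t = \log h_K + tf + o(t,\cdot)$. Since $1/h_K = \rho_{K^*}$ by \eqref{polar-identity}, this gives $\log \rho_t = \log \rho_{K^*} + t(-f) + \bigl(-o(t,\cdot)\bigr)$, and $-o(t,\cdot)$ still satisfies the required $o$-condition; hence $\bla \rho_t\bra$ is precisely the logarithmic family of convex hulls $\bla K^*,-f,t\bra$ in the notation introduced before Lemma \ref{d-area-var}. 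In particular $\bla \rho_0\bra = \bla \rho_{K^*}\bra = K^*$ by \eqref{conv-of-body}, so $\bla \rho_0\bra^* = K$.

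Next I would invoke Lemma \ref{wulff-hull}, which identifies a Wulff shape with the polar of the convex hull generated by the reciprocal function: $\blb h_t\brb = \bla 1/h_t\bra^* = \bla \rho_t\bra^*$. Combining this with the previous step yields the identity $\blb K,f,t\brb = \bla K^*,-f,t\bra^*$, for all $t$ in the relevant interval.

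The conclusion is then immediate. Applying Lemma \ref{d-area-var} with the continuous function $g$ there taken to be $-f$, its first formula gives $\lim_{t\to 0}\bigl(\wt V_q(\bla K^*,-f,t\bra^*) - \wt V_q(K)\bigr)/t = -q\int_{\sn}(-f)(v)\,d\wt C_q(K,v) = q\int_{\sn} f(v)\,d\wt C_q(K,v)$, which, via the identity above, is the asserted $q\neq 0$ formula; the $q=0$ formula follows the same way from the second formula of Lemma \ref{d-area-var}. The unified statement for $\bar V_q$ then follows by differentiating $\log\bar V_q = \tfrac1q(\log\wt V_q - \log\omega_n)$ when $q\neq 0$, and, when $q=0$, by noting that $\wt V_0(K) = \tfrac1n\int_{\sn}\,du = \omega_n$, so that the factor $1/\wt V_q(K)$ reduces to $1/\omega_n$.

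I do not expect any genuine obstacle here: all the analytic content — differentiability in $t$ of the support functions of a logarithmic family (Lemma \ref{lemma4.1}), the uniform Lipschitz bound \eqref{h-q-bound} that licenses the dominated convergence step, and the passage to dual curvature measures through \eqref{cur-m-int} — has already been absorbed into Theorem \ref{dual-volume-variation} and hence into Lemma \ref{d-area-var}. The only point requiring care is bookkeeping: correctly tracking the sign reversal introduced by the substitution $h_t\mapsto 1/h_t$ and confirming that the perturbation error term $o(t,\cdot)$ and its negative both satisfy the hypotheses needed to call $\bla \rho_t\bra$ a logarithmic family.
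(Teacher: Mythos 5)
Your proof is correct and follows essentially the same route as the paper: substitute $\rho_t=1/h_t$, recognize $\bla\rho_t\bra$ as the logarithmic family of convex hulls $\bla K^*,-f,t\bra$, use Lemma \ref{wulff-hull} to get $\blb K,f,t\brb=\bla K^*,-f,t\bra^*$, and apply Lemma \ref{d-area-var} with $g=-f$ so the two sign changes cancel. The bookkeeping you flag (the negated error term still satisfying the $o$-condition, and the reduction of the $\bar V_q$ statement) is handled exactly as you describe.
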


\begin{proof} The logarithmic family of Wulff shapes $\blb K,f,o,t \brb$ is defined
by the Wulff shape $\blb h_t \brb$ where
\[
\log h_t = \log h_K + t f +o(t,\cdot).
\]
This, and the fact that $1/h_K = \rho_{K^*}$, allows us to define
\[
\log \rho^*_t = \log \rho_{K^*} - t f - o(t,\cdot),
\]
and $\rho^*_t $ will generate a logarithmic family of convex hulls $\bla K^*,-f,-o,t \bra$.
Since $\rho^*_t = 1/h_t$, Lemma \ref{wulff-hull} gives
\[
\blb K,f,o,t \brb = \bla K^*,-f,-o,t \bra^*.
\]
The lemma now follows directly from Lemma \ref{d-area-var}.
\end{proof}

The following gives the variational formulas for dual quermassintegrals of Minkowski combinations.

\begin{coro}
Let $K\in \kno$ and $L$ be a compact convex set in $\rn$. Then, for $q\neq 0$,
\begin{align*}
\lim_{t\to 0^{+}} \frac{\wt V_q(K+tL) -\wt V_q(K)}t
&= q \int_{\sn} \frac{h_L(v)}{h_K(v)} \, d\wt C_q(K, v), \\
\shortintertext{and}\\
\lim_{t\to 0^{+}} \frac{\log \bar V_0(K+tL) -\log \bar V_0(K)}t
&= \frac 1{\omega_n}\int_{\sn} \frac{h_L(v)}{h_K(v)} \, d\wt C_0(K, v),
\shortintertext{or equivalently, for all $q\in \mathbb R$,}\\
\frac{d}{dt} \log \bar V_q(K+tL) \Big|_{t=0^{+}}
&= \frac{1}{\wt V_q(K)}\int_{\sn} \frac{h_L(v)}{h_K(v)} \, d\wt C_q(K, v).
\end{align*}
\end{coro}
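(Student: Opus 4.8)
The plan is to recognize the Minkowski combination $K+tL$ (for $t\ge 0$) as a logarithmic family of Wulff shapes formed by $(h_K,\, h_L/h_K)$, and then to appeal directly to Lemma~\ref{dual-volume-variation-wulff}. First I would use \eqref{minkowski-combination} to write $h_{K+tL}=h_K+t\,h_L$ on $\sn$, and hence
\[
\log h_{K+tL}(v)=\log h_K(v)+\log\!\Big(1+t\,\frac{h_L(v)}{h_K(v)}\Big)
=\log h_K(v)+t\,\frac{h_L(v)}{h_K(v)}+o(t,v),
\]
where $o(t,v):=\log\!\big(1+t\,h_L(v)/h_K(v)\big)-t\,h_L(v)/h_K(v)$. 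Here $f:=h_L/h_K$ lies in $C(\sn)$, since $K\in\kno$ forces $h_K$ to be bounded below by a positive constant, while $h_L$ is continuous and bounded on $\sn$.

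Next I would verify the two requirements of \eqref{hst}. Using the elementary inequality $|\log(1+x)-x|\le x^2$, valid for $|x|\le 1/2$, together with $|t\,h_L(v)/h_K(v)|\le |t|\,\|f\|$, one gets $|o(t,v)|\le \|f\|^2\, t^2$ for all $v\in\sn$ once $|t|$ is small enough; thus $o(t,\cdot)$ is continuous on $\sn$ and $o(t,v)/t\to 0$ uniformly in $v$ as $t\to 0$. Moreover, for $\delta>0$ sufficiently small the function $h_t:=h_K+t\,h_L$ is strictly positive and continuous on $\sn$ for every $t\in(-\delta,\delta)$, so $(h_t)_{t\in(-\delta,\delta)}$ is a logarithmic family of Wulff shapes formed by $(h_K,f)$. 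For $t\in[0,\delta)$ the function $h_t$ is exactly the support function of the convex body $K+tL\in\kno$, whence $\blb h_t\brb=K+tL$; that is, $\blb K,f,t\brb=K+tL$ on $[0,\delta)$.

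Finally I would apply Lemma~\ref{dual-volume-variation-wulff} to this logarithmic family (with $\Omega=\sn$, which is trivially not contained in a closed hemisphere) and restrict the two-sided limit it provides to $t\to 0^{+}$, obtaining for $q\neq 0$
\[
\lim_{t\to 0^{+}}\frac{\wt V_q(K+tL)-\wt V_q(K)}{t}
= q\int_{\sn} f(v)\,d\wt C_q(K,v)
= q\int_{\sn}\frac{h_L(v)}{h_K(v)}\,d\wt C_q(K,v),
\]
and similarly the $q=0$ logarithmic identity; dividing through by $\wt V_q(K)=\wt V_q(\blb h_0\brb)$ then produces the unified statement for all $q\in\mathbb R$. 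There is no serious obstacle here: the only delicate point is that the Minkowski combination is defined only for $t\ge 0$, so one cannot a priori view $K+tL$ as a two-sided family; this is circumvented by observing that $h_K+t\,h_L$ remains a positive continuous function for small negative $t$ as well (the associated Wulff shape $\blb h_t\brb$ being then merely some convex body, no longer a Minkowski sum), so that Lemma~\ref{dual-volume-variation-wulff} applies without change and the desired one-sided derivative is read off from the two-sided one. The remaining work is just the routine uniform estimate on $o(t,v)$ recorded above.
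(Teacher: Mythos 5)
Your proposal is correct and follows essentially the same route as the paper: write $h_{K+tL}=h_K+th_L$, pass to logarithms to exhibit $K+tL$ as a logarithmic family of Wulff shapes formed by $(h_K,\,h_L/h_K)$, and invoke Lemma \ref{dual-volume-variation-wulff}. The paper's proof is terser and leaves the uniform estimate on $o(t,v)$ and the one-sided versus two-sided issue implicit, both of which you handle correctly.
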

\begin{proof}
From \eqref{minkowski-combination}, we have
\begin{equation}\label{ntxx}
h_{K+tL} = h_K + t h_L.
\end{equation}
From \eqref{ntxx}, it follows immediately that, for sufficiently small $t>0$,
\[
\log h_{K+tL} = \log h_K + t\frac{h_L}{h_K} +o(t,\cdot),
\]
Since $K$ and $L$ are convex, the Wulff shape $\blb h_{K+tL} \brb = K+tL$.
The desired result now follows directly from
Lemma \ref{dual-volume-variation-wulff}.
\end{proof}

The following variational formula of Aleksandrov for the volume of a convex body is a
critical ingredient in the solution of the classical Minkowski problem. The proof
given by Aleksandrov depends on the Minkowski mixed-volume inequality,
see Schneider \cite{S14}, Lemma 7.5.3.
The proof presented below is different and does not depend on inequalities for mixed volumes.

\begin{lemm}
Suppose $\Omega \subset \sn$ is a closed set not contained in any closed hemisphere of $\sn$.
Suppose $h_0 : \Omega \to (0,\infty)$ and $f: \Omega \to \mathbb R$  are continuous, and $\delta_0>0$.  If for each $t\in (-\delta_0, \delta_0)$ the function $h_t: \Omega \to (0,\infty)$ is defined by
\begin{equation}\label{onlyonce}
h_t=h_0 + t f + o(t,\cdot),
\end{equation}
where $o(t, \cdot)$ is continuous in $\Omega$ and $\lim_{t\to 0} o(t,\cdot)/t = 0$, uniformly in $\Omega$,
and
$\blb h_t \brb$ is the Wulff shape of $h_t$, then
\[
\lim_{t\to 0} \frac{V(\blb h_t\brb) - V(\blb h_0 \brb)}t = \int_{\Omega} f(v)\, dS(\blb h_0 \brb, v).
\]
\end{lemm}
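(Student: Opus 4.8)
The plan is to derive the formula from Theorem~\ref{dual-volume-variation-w} in the case $q=n$, after rewriting the given \emph{linear} family $h_t=h_0+tf+o(t,\cdot)$ as a \emph{logarithmic} one. Recall that $\wt V_n(K)=\wt W_0(K)=\tfrac1n\int_{\sn}\rho_K^n(u)\,du=V(K)$ (by \eqref{def-quermass-q} and \eqref{def-quermass-q-normalized-zero-v}), so it suffices to compute $\tfrac{d}{dt}\wt V_n(\blb h_t\brb)|_{t=0}$ and then to identify the dual curvature measure $\wt C_n(\blb h_0\brb,\cdot)$ with a constant multiple of $S(\blb h_0\brb,\cdot)$.

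\emph{Step 1 (reparametrization).} Since $\Omega$ is compact and $h_0$ is continuous and strictly positive, $h_0\ge c$ on $\Omega$ for some $c>0$, so for $|t|$ small the function $s_t:=(tf+o(t,\cdot))/h_0$ is uniformly small on $\Omega$. Using $\log(1+s)=s+O(s^2)$ uniformly for small $s$,
\[
\log h_t=\log h_0+\log(1+s_t)=\log h_0+t\,\frac{f}{h_0}+\tilde o(t,\cdot),
\]
where $\tilde o(t,\cdot)$ is continuous on $\Omega$ and $\tilde o(t,\cdot)/t\to0$ uniformly (the $o(t,\cdot)/h_0$ term vanishes because $h_0\ge c$, and the $O(s_t^2)$ term vanishes because $s_t^2/t=t\big((f+o(t,\cdot)/t)/h_0\big)^2\to0$ uniformly). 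Thus $\blb h_t\brb$ is precisely a logarithmic family of Wulff shapes associated with $(h_0,f/h_0)$, and since $n\neq0$ and $f/h_0\in C(\Omega)$, Theorem~\ref{dual-volume-variation-w} yields
\[
\lim_{t\to0}\frac{V(\blb h_t\brb)-V(\blb h_0\brb)}{t}=n\int_{\Omega}\frac{f(v)}{h_0(v)}\,d\wt C_n(\blb h_0\brb,v).
\]

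\emph{Step 2 (from $\wt C_n$ to surface area).} Write $K_0=\blb h_0\brb\in\kno$. A direct computation from \eqref{cur-m-int-3} with $q=n$ and \eqref{yy1} (equivalently, \eqref{dual-integral} combined with \eqref{c-v-m} and \eqref{s-a-m-1}) shows $d\wt C_n(K_0,\cdot)=\tfrac1n h_{K_0}\,dS(K_0,\cdot)$, so the right-hand side of Step~1 equals $\int_\Omega f\,(h_{K_0}/h_0)\,dS(K_0,\cdot)$. The remaining --- and, I expect, the only genuinely delicate --- point is that $h_{K_0}=h_0$ holds $S(K_0,\cdot)$-a.e. on $\Omega$; granting this, the integrand collapses to $f$ and the lemma follows. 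To prove it, let $A=\{v\in\Omega:h_{K_0}(v)<h_0(v)\}$ (recall $h_{K_0}\le h_0$ on $\Omega$). Given $x\in\bx_{K_0}(A)$, choose $v\in A$ with $x\in H_{K_0}(v)$; since $x\in\partial K_0=\partial\blb h_0\brb$ there are $y_j\to x$ with $y_j\notin\blb h_0\brb$, hence $w_j\in\Omega$ with $y_j\cdot w_j>h_0(w_j)$, and passing to a subsequence with $w_j\to w^*\in\Omega$ gives $x\cdot w^*\ge h_0(w^*)$; combined with $x\cdot w^*\le h_0(w^*)$ (as $x\in K_0$) this forces $x\cdot w^*=h_0(w^*)=h_{K_0}(w^*)$, so $x\in H_{K_0}(w^*)$. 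Since $x\cdot v=h_{K_0}(v)<h_0(v)$ while $x\cdot w^*=h_0(w^*)$, necessarily $v\neq w^*$, so $x$ has two distinct outer unit normals, i.e. $x\in\sigma_{K_0}$. Hence $\bx_{K_0}(A)\subseteq\sigma_{K_0}$, and by \eqref{temp1} and $\hm(\sigma_{K_0})=0$ we obtain $S(K_0,A)=\hm(\bx_{K_0}(A))=0$. Therefore $\int_\Omega f\,(h_{K_0}/h_0)\,dS(K_0,\cdot)=\int_{\Omega\setminus A}f\,dS(K_0,\cdot)=\int_\Omega f\,dS(\blb h_0\brb,\cdot)$, which is exactly the assertion. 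Note that each ingredient --- the reparametrization, Theorem~\ref{dual-volume-variation-w}, the identification of $\wt C_n$ with the cone-volume measure, and $\hm(\sigma_{K_0})=0$ --- is independent of any mixed-volume inequality, which is the whole point of this proof.
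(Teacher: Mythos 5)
Your proof is correct and follows essentially the same route as the paper's: reparametrize the linear family as a logarithmic family with speed $f/h_0$, apply the variational formula for $\wt V_n=V$ from Theorem~\ref{dual-volume-variation-w}, and use $d\wt C_n(K_0,\cdot)=\tfrac1n h_{K_0}\,dS(K_0,\cdot)$ together with $h_{K_0}=h_0$ holding $S(K_0,\cdot)$-a.e.\ on $\Omega$. The only difference is that you prove this last a.e.\ equality directly (via the two-normals argument showing $\bx_{K_0}(A)\subseteq\sigma_{K_0}$), where the paper simply cites it as (7.100) in Schneider; your argument for it is valid.
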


\begin{proof} From \eqref{onlyonce}, it follows immediately that for sufficiently small $t$,
\[
\log h_t(v) = \log h_0(v) + t\frac{f(v)}{h_0(v)} +o(t,v).
\]
Since $\wt V_n = V$, the case $q=n$
in Lemma \ref{dual-volume-variation-w} gives
\[
\lim_{t\to 0} \frac{V(\blb h_t \brb) -V(\blb h_0 \brb)}t
= n \int_{\Omega} \frac{f(v)}{h_0(v)}  \, d\wt C_n(\blb h_0 \brb, v).
\]
But from \eqref{dual-integral} we know that, on $\Omega$,
\[
d\wt C_n(\blb h_0 \brb, \cdot) = dV_{\sblb h_0 \sbrb} = \frac1n h_{\sblb h_0 \sbrb}\, dS(\blb h_0 \brb, \cdot)
= \frac1n h_0\, dS(\blb h_0 \brb, \cdot),
\]
where the last of these follows from the well-known fact (see (7.100) in Schneider \cite{S14}) that the set of points on $\Omega$ where $h_{\sblb h_0 \sbrb}\neq h_0$ has $S(\blb h_0 \brb, \cdot)$-measure $0$.
\end{proof}

\section{Minkowski problems associated with quermassintegrals \\ and dual quermassintegrals}

Roughly speaking, Minkowski problems are characterization problems of
the differentials of geometric functionals of convex bodies.
Two families of fundamental geometric functionals of convex bodies
are quermassintegrals and dual quermassintegrals. Minkowski problems
associated with quermassintegrals have a long history and have attracted
much attention from convex geometry, differential geometry, and partial
differential equations. We first mention these Minkowski problems, and then
pose a Minkowski problem for dual quermassintegrals, called the dual Minkowski problem.

Area measures come from variations of quermassintegrals and can be viewed
as differentials of quermassintegrals. The Minkowski problem associated
with quermassintegrals is the following:

\medskip
\noindent
{\bf The Minkowski problem of area measures.}
{\it Given a finite Borel measure $\mu$ on the unit sphere $\sn$
and an integer $1\le i \le n-1$.
What are the necessary and sufficient conditions so that there
exists a convex body $K$ in $\rn$ satisfying
\[
S_i(K, \cdot) = \mu\,?
\]
}

The case of $i=n-1$ is the classical Minkowski problem.
The case of $i=1$ is the classical Christoffel problem.

\medskip

As is established in previous sections, dual curvature measures come
from variations of dual quermassintegrals and can be viewed as differentials
of dual quermassintegrals. Thus, the Minkowski problem associated with dual
quermassintegrals is dual to the Minkowski problem of area measures
which is associated with quermassintegrals.
Therefore, it is natural to pose the following dual Minkowski problem:
\smallskip

\noindent
{\bf The Minkowski problem for dual curvature measures.}
{\it Given a finite Borel measure $\mu$ on the unit sphere $\sn$ and a real number $q$.
What are necessary and sufficient conditions for the existence of
a convex body $K\in \kno$ satisfying
\[
\wt C_q(K,\cdot) = \mu\,?
\]
}

Now, \eqref{dual-integral} reminds us that the $n$-th dual curvature measure $\wt C_n(K,\cdot)$ is the
cone volume measure $V_K$. So, the case of $q=n$ of the dual Minkowski problem is
the logarithmic Minkowski problem for cone volume measure. Also \eqref{dual-integral-2} reminds us that
the $0$-th dual curvature measure $\wt C_0(K,\cdot)$ is Aleksandrov's integral curvature
of $K^*$ (with the constant factor $1/n$). Thus, the case of $q=0$ of the dual Minkowski problem is
the Aleksandrov problem. The logarithmic Minkowski problem and
the Aleksandrov problem were thought to be two entirely different problems.
It is amazing that they are now seen to be special cases of the dual Minkowski problem.

\smallskip

We will use variational method to obtain a solution to the dual Minkowski problem for the symmetric case.
The first crucial step is to associate the dual Minkowski problem to a maximization problem.
By using the variational formulas for dual quermassintegrals, we can
 transform the existence problem for the Minkowski problem for dual curvature measures to
 that of a maximization problem.

Suppose $\mu$ is a finite Borel measure on $\sn$ and $K\in\kno$.
Define
\begin{equation}
\Phi_\mu(K) = - \frac1{|\mu|} \int_{\sn} \log h_K(v) \, d\mu(v) + \log \bar V_q(K).
\end{equation}
Recall that $\bar V_q(K)$ is the normalized $(n-q)$-th dual quermassintegral of $K$.
Since $\bar V_q$ is homogeneous of degree $1$,  it follows that
$\Phi_\mu$ is homogeneous of degree 0; i.e., for $Q\in\kno$ and $\lambda >0$
\begin{equation}\label{homog5}
\Phi_\mu (\lambda Q) = \Phi_\mu(Q).
\end{equation}

The following lemma shows that a solution to the dual Minkowski problem for the measure $\mu$
is also a solution to a maximization problem for the functional $\Phi_\mu$.

\begin{lemm}\label{lem-max2}
Suppose $q\in\rbo$ and $\mu$ is a finite even Borel measure on $\sn$
with $|\mu| >0$, but when $q=0$, we are given that $|\mu|=\omega_n$. If $K\in\kne$ with
$\wt V_q(K)=|\mu|$ is such that
\[
\sup\{\Phi_\mu(Q) : \text{$\wt V_q(Q)=|\mu|$ and $Q \in \kne$} \} = \Phi_\mu(K),
\]
then
\[
\wt C_q(K,\cdot) = \mu.
\]
\end{lemm}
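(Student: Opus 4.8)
The plan is to run a first-variation (Lagrange multiplier) argument based on the Wulff-shape variational formula of Lemma \ref{dual-volume-variation-wulff}, using a one-sided estimate to avoid having to compare the support function of a Wulff shape with the function that generates it. The first step is to dispose of the volume constraint. Since $\Phi_\mu$ is homogeneous of degree $0$ by \eqref{homog5}, while $\bar V_q$ is homogeneous of degree $1$ and $\wt V_q(Q)>0$ for every $Q\in\kno$ (with $\wt V_0\equiv\omega_n=|\mu|$ when $q=0$), each $Q\in\kne$ can be replaced by a suitable dilate $\lambda Q\in\kne$ with $\wt V_q(\lambda Q)=|\mu|$ without changing the value of $\Phi_\mu$. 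Hence the hypothesis says precisely that $\Phi_\mu(Q)\le\Phi_\mu(K)$ for all $Q\in\kne$; i.e.\ $K$ is an \emph{unconstrained} maximizer of $\Phi_\mu$ over $\kne$.

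Next, fix an even $f\in C(\sn)$ and let $K_t=\blb K,f,t \brb$ be the logarithmic family of Wulff shapes determined by $\log h_t=\log h_K+tf$ (with vanishing error term). Because $h_K$ and $f$ are even, $h_t$ is even and positive for $|t|$ small, so $K_t\in\kne$. By the definition of the Wulff shape, $h_{K_t}\le h_t$, hence $\log h_{K_t}-\log h_K\le tf$ pointwise on $\sn$. For $t>0$, integrating this inequality against $\mu$ and using $\Phi_\mu(K_t)\le\Phi_\mu(K)$ gives
\[
0\ \ge\ \Phi_\mu(K_t)-\Phi_\mu(K)\ \ge\ -\frac{t}{|\mu|}\int_{\sn}f\,d\mu\ +\ \bigl(\log\bar V_q(K_t)-\log\bar V_q(K)\bigr).
\]
Dividing by $t>0$, letting $t\to0^{+}$, and invoking Lemma \ref{dual-volume-variation-wulff} together with $\wt V_q(K)=|\mu|$, we obtain
\[
0\ \ge\ \frac1{|\mu|}\int_{\sn}f\,d\wt C_q(K,v)\ -\ \frac1{|\mu|}\int_{\sn}f\,d\mu ,
\]
that is, $\int_{\sn}f\,d\wt C_q(K,v)\le\int_{\sn}f\,d\mu$. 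Since $-f$ is also even and continuous, applying the same argument to $-f$ yields the reverse inequality, so $\int_{\sn}f\,d\wt C_q(K,v)=\int_{\sn}f\,d\mu$ for every even $f\in C(\sn)$.

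Finally, I would upgrade this equality of integrals to an equality of measures. For the origin-symmetric body $K$ one has $\rho_K(-u)=\rho_K(u)$ and $\alpha_K(-u)=-\alpha_K(u)$ for a.e.\ $u$, so the substitution $u\mapsto -u$ in \eqref{cur-m-int} shows that $\wt C_q(K,\cdot)$ is an even measure; and $\mu$ is even by hypothesis. Hence for an arbitrary $g\in C(\sn)$, writing $g_e(v)=\tfrac12\bigl(g(v)+g(-v)\bigr)$ for its even part, we get $\int_{\sn}g\,d\mu=\int_{\sn}g_e\,d\mu=\int_{\sn}g_e\,d\wt C_q(K,\cdot)=\int_{\sn}g\,d\wt C_q(K,\cdot)$ (the first and last equalities because both measures are even), and the Riesz representation theorem gives $\wt C_q(K,\cdot)=\mu$.

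The step I expect to require the most care is the handling of the $\mu$-integral: because $h_{K_t}$ can differ from $h_t$ on a set carrying positive $\mu$-mass, one cannot simply differentiate $t\mapsto\int_{\sn}\log h_{K_t}\,d\mu$, so the argument must be organized as a pair of one-sided inequalities (one from $h_{K_t}\le h_t$ with $t>0$, the other from replacing $f$ by $-f$) rather than as an exact derivative computation. All of the genuine analytic input---the differentiability of $t\mapsto\log\bar V_q(K_t)$ along a logarithmic Wulff family and the identification of its derivative with an integral against $\wt C_q(K,\cdot)$---has already been established in Lemma \ref{dual-volume-variation-wulff}, so only routine bookkeeping remains.
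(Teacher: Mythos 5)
Your proof is correct, and it takes a recognizably different route from the paper's. The paper does not use Wulff shapes in this argument at all: it lifts the optimization to the function space $C_e^+(\sn)$ by setting $\Phi(f)=\frac1{|\mu|}\int_{\sn}\log f\,d\mu+\log\bar V_q(\bla f\bra^*)$, observes that $\Phi(f)\le\Phi(\rho_{\sbla f\sbra})$ so that the supremum over functions coincides with the supremum over bodies, and then perturbs radially via $\rho_t=\rho_{K_0}e^{tg}$. Since the entropy term is then \emph{exactly} linear in $t$ by construction and the dual-volume term is differentiable by Theorem \ref{dual-volume-variation}, the maximizer is a genuine interior critical point and the Euler equation $\mu=\wt C_q(K_0^*,\cdot)$ falls out with no one-sided estimates. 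You instead stay on the body space, invoke the Wulff-shape formula of Lemma \ref{dual-volume-variation-wulff}, and absorb the discrepancy between $h_{K_t}$ and the generating function $h_t$ into the one-sided inequality $h_{K_t}\le h_t$, recovering equality by running the argument for both $f$ and $-f$; this is closer to the classical treatment of the logarithmic Minkowski problem. Both routes rest on the same analytic core (the Section 4 variational formulas), and your reduction of the constrained maximum to an unconstrained one via \eqref{homog5}, as well as your final passage from even test functions to all test functions using the evenness of $\wt C_q(K,\cdot)$ for origin-symmetric $K$, are both sound and are needed (the latter implicitly) in the paper's version as well.
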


\begin{proof}
On $C_e^+(\sn)$, the class of strictly positive continuous even functions on $\sn$, define the functional
$\Phi:C_e^+(\sn) \to \rbo$, by letting for each $f\in C_e^+(\sn)$,
\begin{equation}\label{phi}
\Phi(f)= \frac1{|\mu|}\int_{\sn} \log f \, d\mu  + \log \bar V_q(\bla f \bra^*).
\end{equation}
Note that since $f$ is even, it follows that $\bla f \bra = \conv \{f(u)u : u\in\sn \}\in\kne$. We first observe that $\Phi$ is homogeneous of degree $0$, in that for all $\lambda>0$, and all $f\in C_e^+(\sn)$,
\[
\Phi(\lambda f)= \Phi(f).
\]
To see this, first recall the fact that, from its definition, $\bar V_q$ is obviously homogeneous of degree $1$, while clearly $\bla \lambda f \bra = \lambda \bla  f \bra$ and thus $\bla \lambda f \bra^* = \lambda^{-1} \bla  f \bra^*$.

To see that $\Phi:C_e^+(\sn) \to \rbo$ is continuous, recall that
if $f_0, f_1,\ldots \in C_e^+(\sn)$, are such that
$\lim_{k\to\infty}f_k = f_0$, uniformly on $\sn$, then
$\bla f_k \bra \to \bla f_0 \bra$,
and thus $\bla f_k \bra^* \to \bla f_0 \bra^*$.
Since $\bar V_q:\kno \to (0,\infty)$ is continuous, the continuity of $\Phi$ follows.

Consider the maximization problem
\begin{equation}\label{max-prob}
\sup\{\Phi(f) : f\in C_e^+(\sn) \}.
\end{equation}

For the convex hull ${\bla f \bra}=\conv\{ f(u)u : u\in\sn  \}$, of
$f\in C_e^+(\sn)$,
we clearly have  $\rho_{{\sbla f \sbra}} \ge f$
and also $\bla {\rho_{{\sbla f \sbra}}} \bra = {\bla f \bra}$, from \eqref{conv-of-body}, and thus
$\bla {\rho_{{\sbla f \sbra}}} \bra^* = {\bla f \bra}^*$.
Thus, directly from \eqref{phi}, we have
\[
\Phi(f) \le \Phi(\rho_{{\sbla f \sbra}}).
\]
This tells us that in searching for the supremum in \eqref{max-prob} we can restrict our attention to the radial functions of bodies in $\kne$; i.e.,
\[
\sup\{\Phi(f) : f\in C_e^+(\sn) \}=\sup\{\Phi(\rho_Q) : Q\in\kne \}.
\]

Therefore, a convex body $K_0\in \kne$ satisfies
\[
\Phi_\mu(K_0^*) = \sup\{\Phi_\mu(Q^*) : Q^*\in \kne\},
\]
if and only if
\[
\Phi(\rho_{K_0})=\sup\{\Phi(f) : f\in C_e^+(\sn) \}.
\]
From \eqref{homog5}
we see that we can always restrict our search to bodies $Q\in\kne$ for which $\wt V_q(Q^*)=|\mu|$, when $q\neq0$.
When
$q=0$, in order that
$\wt V_0(Q^*) = |\mu|$, requires that $|\mu|=\omega_n$, since $\wt V_0(Q)=\omega_n$, for all bodies $Q$.
Thus, we can restrict our attention to those bodies such that $\wt V_q(Q^*)=|\mu|$.

Suppose $K_0^*\in\kne$ is a maximizer for $\Phi_\mu$, or equivalently $\rho_{K_0}$ is a maximizer for $\Phi$; i.e.,
\[
\Phi_\mu(K_0^*) = \sup\{\Phi_\mu(Q^*) : \text{$\wt V_q(Q^*)=|\mu|$ and  $Q^*\in \kne$}\}.
\]
Choose $g : \sn \to \rbo$ a continuous arbitrary but fixed even function.
For $\delta>0$ and $t\in (-\delta, \delta)$, define $\rho_t$ by
\[
\rho_t= \rho_{K_0} e^{t g},
\]
or equivalently,
\begin{equation}\label{rhot}
\log \rho_t = \log\rho_{K_0} + tg,
\end{equation}
Let $\bla \rho_t \bra =\bla K_0, g,t \bra$ be the logarithmic family of convex hulls associated with $(K_0, g)$.
Since $\bla \rho_0 \bra  = \bla  \rho_{K_0} \bra = K_0$, we can use
Lemma \ref{dual-volume-variation} to conclude that,
\begin{equation}\label{ttt}
\frac{d}{dt} \log \bar V_q(\bla K_0, g,t \bra^*)\Big|_{t=0} = - \frac{1}{\wt V_q(K_0^*)}
\int_{\sn} g(u) \, d\wt C_q(K_0^*, u).
\end{equation}
From the fact that $\rho_0=\rho_{K_0}$ is a maximizer for $\Phi$, and definition \eqref{phi}
we have
\[
0=\frac{d}{dt} \Phi(\rho_t)\Big|_{t=0} = \frac{d}{dt}\left( \frac1{|\mu|}
\int_{\sn} \log \rho_t(u) \, d\mu(u) + \log \bar V_q(\bla K_0,g,t\bra^*)
\right)\Big|_{t=0}.
\]
This, together with \eqref{rhot} and \eqref{ttt}, shows that
\begin{equation}\label{r3r}
\frac1{|\mu|} \int_{\sn} g(u)\, d\mu(u)
- \frac1{\wt V_q(K_0^*)}\int_{\sn} g(u)\, d\wt C_q(K_0^*, u)=0.
\end{equation}
Since $\wt V_q(K_0^*) = |\mu|$, and since \eqref{r3r} must hold for all
continuous even $g : \sn \to \rbo$,
we conclude that $\mu = \wt C_q(K_0^*, \cdot)$.
\end{proof}

Since this paper aims at a solution to the dual Minkowski problem for origin-symmetric convex bodies,
Lemma \ref{lem-max2} is stated and proved only for even measures and origin-symmetric
convex bodies. However, similar result holds for general measures and convex bodies that
contain the origin in their interiors. The above proof works, mutatis mutandis.

\section{Solving the maximization problem \\ associated with the dual Minkowski problem }

In the previous section, by using a variational argument,
we showed that the existence of a solution to a certain maximization problem
would imply the existence of a solution to the dual Minkowski problem.
In this section, we show that the maximization problem does indeed have a solution. The key
is to prove compactness and non-degeneracy, that is, the convergence of a maximizing
sequence of convex bodies to a convex body (a compact convex set with non-empty interior).
This requires delicate estimates of
dual quermassintegrals of polytopes and entropy-type integrals with
respect to the given measure in the dual Minkowski problem.

Throughout this section, for real  $p>0$, we shall use $p'$ to denote the H\"older conjugate of $p$.
Also, the expression $c_1=c(n,k,N)$ will be used to mean that $c_1$ is a \lq\lq constant\rq\rq depending on only the values of $n$, $k$, and $N$.

\subsection{Dual quermassintegrals of cross polytopes}
Let $e_1, \dots, e_n$ be orthogonal unit vectors
and $a_1, \ldots, a_n\in (0,\infty)$. The convex body
\[
P=\{x\in \rn : \text{$|x\cdot e_i|\le a_i$ for all $i$}\}
\]
is a rectangular parallelotope centered at the origin. The parallelotope $P$ is the Minkowski sum of the line segments whose support functions are $x \mapsto a_i |x\cdot e_i|$, and hence the support function of $P$ is given by
\[
h_P(x) = \sum_{i=1}^n a_i |x\cdot e_i|,
\]
for $x\in\rn$.
The polar body $P^*$ is a cross polytope. From \eqref{polar-identity} we know that the radial function
of $P^*$ is given by
\[
\rho_{P^*}(x) = 1/h_P(x)= \left(\sum_{i=1}^n a_i |x\cdot e_i|\right)^{-1},
\]
for $x\in\rn\setminus \{0\}$.
From \eqref{def-quermass-q} we know that for the $(n-q)$-th dual quermassintegral of the cross polytope $P^*$  we have,
\begin{equation}\label{s1-1}
\wt W_{n-q}(P^*) = \frac1n \int_{\sn} \left(\sum_{i=1}^n a_i |u\cdot e_i|\right)^{-q} du.
\end{equation}

From \eqref{def-quermass-q} we see that
when $q=n$,  
\eqref{s1-1} becomes the (well-known) volume of a cross-polytope:
\begin{equation}\label{rst}
V(P^*) = \frac{2^n}{n!} (a_1 \cdots a_n)^{-1},
\end{equation}
and thus,
\begin{equation}\label{6.1-1}
\int_{\sn} \left(\sum_{i=1}^n a_i |u\cdot e_i|\right)^{-n} \, du =   \frac{2^n}{(n-1)!}(a_1 \cdots a_n)^{-1}.
\end{equation}

When some of the $a_i$ are small, the dual quermassintegral
$\wt W_{n-q}(P^*)$ becomes large. The following lemma gives a critical estimate for the size
of the dual quermassintegral.

\begin{lemm}\label{dmp-lem1}
Suppose $q\in (0,n] $, and $k$ is an integer such that $1\le k < n$.
Suppose also that
$e_1, \dots, e_n$ is an orthonormal basis in $\rn$, and 
$\varepsilon_0 >0$.
If $a_1, \ldots, a_n\in(0,\infty)$ with
$a_{k+1}, \ldots, a_n \in (\varepsilon_0,\infty)$,
then
\begin{equation}\label{d-mp-e1}
\frac1q \log \int_{\sn} \left(\sum_{i=1}^n a_i |u\cdot e_i|\right)^{-q} \, du
\le -\frac1N\log(a_1\cdots a_k) + c_0,
\end{equation}
where
\begin{equation*}
N= \begin{cases}
n             &\text{when $q=n$,}\\
\infty        &\text{when $0<q<1$,}\\
\theta       &\text{when $1\le q<n$,}
\end{cases}
\end{equation*}
where $\theta$ can be chosen to be any real number such that $\frac{q-1}{(n-1)q} <\frac1\theta < \frac1n$,
and
$c_0>0$ is
\begin{equation*}
c_0= \begin{cases}
c(k, n, \varepsilon_0)                                               &\text{when $q=n$,}\\
c(q, k, n, \varepsilon_0, N)           &\text{when $1\le q<n$,}\\
c(q, k, n, \varepsilon_0)                &\text{when $0<q<1$ or when $q=n$.}
\end{cases}
\end{equation*}
\end{lemm}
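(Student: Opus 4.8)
The three ranges of $q$ will be handled separately; in each the left-hand side of \eqref{d-mp-e1} is reduced to (the logarithm of) an explicit or manifestly finite integral over $\sn$, with the numbers $a_{k+1},\dots,a_n$ entering only through the lower bound $\varepsilon_0$ and the factor $(a_1\cdots a_k)^{-1}$ being extracted explicitly.

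For $q=n$ one invokes the exact evaluation \eqref{6.1-1}: taking $\tfrac1n\log$ of $\int_{\sn}\big(\sum_i a_i|u\cdot e_i|\big)^{-n}\,du=\tfrac{2^n}{(n-1)!}(a_1\cdots a_n)^{-1}$ gives $-\tfrac1n\log(a_1\cdots a_k)-\tfrac1n\log(a_{k+1}\cdots a_n)+\tfrac1n\log\tfrac{2^n}{(n-1)!}$, and since $a_i>\varepsilon_0$ for $i>k$ the middle summand is at most $\tfrac{n-k}{n}|\log\varepsilon_0|$; this is \eqref{d-mp-e1} with $N=n$ and $c_0=c(k,n,\varepsilon_0)$. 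For $0<q<1$ (where $N=\infty$, so the $a_1,\dots,a_k$ may not appear) one bounds $\sum_{i=1}^n a_i|u\cdot e_i|\ge\varepsilon_0\sum_{i>k}|u\cdot e_i|\ge\varepsilon_0\big(\sum_{i>k}(u\cdot e_i)^2\big)^{1/2}$, so that the integral is at most $\varepsilon_0^{-q}\int_{\sn}\big(\sum_{i>k}(u\cdot e_i)^2\big)^{-q/2}\,du$, and this is finite because the zero set of $\big(\sum_{i>k}(u\cdot e_i)^2\big)^{1/2}$ is a subsphere of $\sn$ of codimension $n-k\ge1>q$. This gives $c_0=c(q,k,n,\varepsilon_0)$.

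The substantive case is $1\le q<n$. Fix $\theta=N$ with $\tfrac{q-1}{(n-1)q}<\tfrac1\theta<\tfrac1n$, set $\alpha=k/\theta$ (so $0<\alpha<k/n<1$), and apply the arithmetic--geometric mean inequality twice: first $\sum_{i=1}^k a_i|u\cdot e_i|\ge k(a_1\cdots a_k)^{1/k}\big(\prod_{i=1}^k|u\cdot e_i|\big)^{1/k}$, then, with $\sum_{i>k}a_i|u\cdot e_i|\ge\varepsilon_0\sum_{i>k}|u\cdot e_i|$ and the weighted bound $P+Q\ge\alpha^{-\alpha}(1-\alpha)^{-(1-\alpha)}P^\alpha Q^{1-\alpha}$, obtaining for every $u\in\sn$
\[
\sum_{i=1}^n a_i|u\cdot e_i|\ \ge\ c(k,n,\varepsilon_0,N)\,(a_1\cdots a_k)^{\alpha/k}\Big(\prod_{i=1}^k|u\cdot e_i|\Big)^{\alpha/k}\Big(\sum_{i>k}|u\cdot e_i|\Big)^{1-\alpha}.
\]
Raising to the $-q$ power and integrating then gives
\[
\int_{\sn}\Big(\sum_{i=1}^n a_i|u\cdot e_i|\Big)^{-q}du\ \le\ c\,(a_1\cdots a_k)^{-q\alpha/k}\int_{\sn}\Big(\prod_{i=1}^k|u\cdot e_i|\Big)^{-q\alpha/k}\Big(\sum_{i>k}|u\cdot e_i|\Big)^{-q(1-\alpha)}du,
\]
so it remains to show the last integral is finite. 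Passing to bipolar coordinates $u=(\cos\phi)\,\omega_1+(\sin\phi)\,\omega_2$ adapted to the splitting $\rn=\spane\{e_1,\dots,e_k\}\oplus\spane\{e_{k+1},\dots,e_n\}$, where $\omega_1\in S^{k-1}$ and $\omega_2\in S^{n-k-1}$ are unit vectors, $\phi\in[0,\pi/2]$, and $du=\cos^{k-1}\phi\,\sin^{n-k-1}\phi\,d\phi\,d\omega_1\,d\omega_2$, the integrand is a product of functions of the separate variables, so by Tonelli the integral factors as the product of $\int_{S^{k-1}}\big(\prod_{i=1}^k|\omega_1\cdot e_i|\big)^{-q\alpha/k}d\omega_1$, the bounded factor $\int_{S^{n-k-1}}\big(\sum_{i>k}|\omega_2\cdot e_i|\big)^{-q(1-\alpha)}d\omega_2$ (finite since $\sum_{i>k}|\omega_2\cdot e_i|\in[1,\sqrt{n-k}\,]$ for unit $\omega_2$), and $\int_0^{\pi/2}\cos^{k-1-q\alpha}\phi\,\sin^{n-k-1-q(1-\alpha)}\phi\,d\phi$. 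The first factor together with the endpoint $\phi\to\pi/2$ of the third converges because $q\alpha/k<1$, which holds since $\alpha<k/n<k/q$; the endpoint $\phi\to0$ of the third converges because $q(1-\alpha)<n-k$, i.e.\ $\alpha>(q-n+k)/q$. This last inequality is precisely where the hypothesis $\tfrac1\theta>\tfrac{q-1}{(n-1)q}$ is used: a one-line computation reduces $\tfrac{k(q-1)}{(n-1)q}\ge\tfrac{q-n+k}{q}$ to $(n-1-k)(n-q)\ge0$, so $\alpha=k/\theta>\tfrac{k(q-1)}{(n-1)q}\ge\tfrac{q-n+k}{q}$. Taking $\tfrac1q\log$ of the displayed inequality then gives \eqref{d-mp-e1} with $N=\theta$ and $c_0=c(q,k,n,\varepsilon_0,N)$.

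The main obstacle is the intermediate case: one must hit upon the correct two-step AM--GM estimate so that it simultaneously pulls out the power $(a_1\cdots a_k)^{-q/N}$ and leaves a $u$-integral that still converges, and then verify that the convergence conditions for the angular integral exactly reproduce the admissible window $\tfrac{q-1}{(n-1)q}<\tfrac1\theta<\tfrac1n$ for $\theta$. The boundary cases $q=n$ and $0<q<1$ are then quick, given the explicit formula \eqref{6.1-1} and an elementary codimension count, respectively.
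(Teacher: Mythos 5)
Your proof is correct and follows essentially the same route as the paper's: the same splitting of $\sn$ via the general spherical (bipolar) coordinates adapted to $\mathbb{R}^k\times\mathbb{R}^{n-k}$, and your weighted AM--GM bound $P+Q\ge \alpha^{-\alpha}(1-\alpha)^{-(1-\alpha)}P^{\alpha}Q^{1-\alpha}$ with $\alpha=k/N$ is precisely the Young's-inequality step in the paper (with $\alpha=1/p$, $p=N/k$), producing the identical admissible window $\tfrac{q-1}{(n-1)q}<\tfrac1N<\tfrac1n$. The only cosmetic differences are that you extract $(a_1\cdots a_k)^{-q/N}$ by a pointwise AM--GM on $S^{k-1}$ and then verify finiteness of $\int_{S^{k-1}}\bigl(\prod_i|\omega\cdot e_i|\bigr)^{-q/N}d\omega$ directly, where the paper instead applies Jensen's inequality and the exact cross-polytope formula \eqref{6.1-1} in the subspace, and that for $0<q<1$ you use a distance-to-subsphere codimension count rather than the paper's $p\to\infty$ limit of the same framework.
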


\begin{proof}
When $q=n$, \eqref{d-mp-e1} follows directly from \eqref{6.1-1}.

Now consider the case where $0<q<n$.
Write $\rn = \mathbb R^{k} \times \mathbb R^{n-k}$,  with $\{e_1, \dots, e_k\} \subset \mathbb R^k$,
 and $\{e_{k+1}, \dots, e_n\} \subset \mathbb R^{n-k}$. Consider the general spherical coordinates
\[
u=(u_2\cos\varphi, u_1\sin\varphi), \quad u_2 \in  S^{k-1} \subset \mathbb R^{k},\
 u_1 \in  S^{n-k-1} \subset \mathbb R^{n-k}, \quad 0\le \varphi \le \frac\pi2.
\]
For spherical Lebesgue measures on $\sn$ and its subspheres we have
(see \cite{GZ99}),
\begin{equation}\label{spherical-u}
du = \cos^{k-1}\varphi \, \sin^{n-k-1}\varphi \,  d\varphi du_2 du_1.
\end{equation}

Let
\[
h_1(u_1) = \sum_{i=k+1}^n a_i |u_1\cdot e_i|, \quad h_2(u_2) = \sum_{i=1}^k a_i |u_2\cdot e_i|,
\]
be the support functions of the corresponding rectangular parallelotopes in $\mathbb R^{n-k}$ and $\mathbb R^{k}$.
Throughout $N$ will be chosen so that $N\ge n$. Let $p=N/k>1$.
From \eqref{spherical-u} and Young's inequality, we have
\begin{align}
\int_{\sn} &\left(\sum_{i=1}^n a_i |u\cdot e_i|\right)^{-q} \, du
= \int_{\sn} \left(h_2(u_2) \cos\varphi + h_1(u_1) \sin\varphi \right)^{-q}\, du \nonumber\\
&\le \int_0^\frac\pi2 \int_{S^{n-k-1}} \int_{S^{k-1}}
(p'h_1(u_1)\sin\varphi)^{-\frac q{p'}} (ph_2(u_2)\cos\varphi)^{-\frac q{p}}
(\sin\varphi)^{n-k-1} (\cos\varphi)^{k-1} \, d\varphi du_1 du_2 \nonumber\\
&= c_2
\int_{S^{n-k-1}} h_1(u_1)^{-\frac q{p'}} \, du_1
 \int_{S^{k-1}} h_2(u_2)^{-\frac q{p}} \, du_2,  \label{s1-0}
\end{align}
where
\[
c_2= ({p'}^{\frac 1{p'}}  {p}^{\frac 1{p}})^{-q} \int_0^\frac\pi2
(\sin\varphi)^{n-k-1-\frac q{p'}} (\cos\varphi)^{k-1-\frac qp} \, d\varphi=
\frac12 ({p'}^{\frac 1{p'}}  {p}^{\frac 1{p}})^{-q}  \Beta \left({\frac{n-k-\frac q{p'}}2, \frac{k-\frac qp}2}\right).
\]
The integral above is on $(0,\infty)$ provided both
\begin{equation}\label{s1-1-1}
k-\frac qp>0\quad \text{and}\quad n-k-\frac q{p'}>0,
\end{equation}
which, since $N=pk$, can be written as
\begin{equation}\label{s1-2}
\quad \frac1N < \frac1q, \quad \text{and}\quad \frac1N > \frac1k \left(1-\frac nq\right)+\frac1q.
\end{equation}
From $1\le k < n$ and $0<q<n$, we know that
\[
\frac1q > \frac1n, \quad \text{and}\quad \frac1k \left(1-\frac nq\right)+\frac1q \le \frac1{n-1}\left(1-\frac nq\right)+\frac1q
=\frac1{(n-1)q'}.
\]
Thus, the inequalities in \eqref{s1-2} and thus the inequalities in \eqref{s1-1-1} will be satisfied whenever $N$ can be chosen so that,
\begin{equation}\label{s1-3}
\frac1{(n-1)q'} <\frac1N <\frac1n.
\end{equation}
Since we are dealing with the case where $0<q<n$, such a choice of $N$ is always possible.
Note that all of this continues to hold in the cases where $N=\infty=p$ and where $p=1$, 
mutatis mutandis.

Consider first the subcase where $1\le q<n$.
Jensen's inequality, together with the left inequality in \eqref{s1-1-1},
and \eqref{6.1-1} in $\mathbb R^k$, gives
\begin{equation}\label{s1-4}
\left(\frac1{k\omega_{k}}\int_{S^{k-1}} h_2(u_2)^{-\frac q{p}}\, du_2\right)^{\frac pq}
\le
\left(\frac1{k\omega_{k}}\int_{S^{k-1}} h_2(u_2)^{-k}\, du_2\right)^{\frac1{k}}
=c_3 (a_{1} \cdots a_k)^{-\frac1k},
\end{equation}
Jensen's inequality, together with the right inequality in \eqref{s1-1-1}, and \eqref{6.1-1} in $\mathbb R^{n-k}$, when combined with the fact that $a_{k+1}, \ldots, a_n > \varepsilon_0$, gives
\begin{equation}\label{dmp-j1}
\left(\int_{S^{n-k-1}}  \negthinspace\negthinspace\negthinspace \negthinspace\negthinspace\negthinspace  h_1(u_1)^{-\frac q{p'}}\, du_1\right)^{\frac{p'}q}
\le c_4\left(   \int_{S^{n-k-1}} \negthinspace\negthinspace\negthinspace \negthinspace\negthinspace\negthinspace h_1(u_1)^{k-n}\, du_1\right)^{\frac1{n-k}}
=c_5 (a_{k+1} \cdots a_n)^{\frac1{k-n}}
\le c_6,
\end{equation}
where $c_3, \ldots, c_6$ are $c(q,k,n,N,\varepsilon_0)$ constants.

Since $p>1$, we know $p'$ is positive.
Using \eqref{s1-0}, \eqref{dmp-j1}, and \eqref{s1-4}, we get
\[
\log \int_{\sn} \left(\sum_{i=1}^n a_i |u\cdot e_i|\right)^{-q} \, du
\le
\log c_2 + \frac{q}{p'} \log c_6 +\frac{q}p \log c_3 + \log(k\omega_k)
-\frac{q}{pk} \log(a_1\cdots a_k).
\]
Since $N=pk$, this gives \eqref{d-mp-e1} for the case where $1\le q<n$.

Finally, we treat the subcase where $0<q<1$.
In this case, the H\" older conjugate  $q'<0$, and
to satisfy \eqref{s1-3} we may take $N$ to be arbitrary large.
Taking the limit $p \to \infty$ (and hence $p' \to 1$) turns
\eqref{s1-0} into
\begin{equation}\label{oops}
\int_{\sn} \left(\sum_{i=1}^n a_i |u\cdot e_i|\right)^{-q} \, du
\le c'_2
\int_{S^{n-k-1}} h_1(u_1)^{-q} \, du_1
 \int_{S^{k-1}}  \, du_2,
\end{equation}
where
\[
c'_2=
\frac12  \Beta \left({\frac{n-k-q}2, \frac{k}2}\right).
\]
which is positive and depends only on $q, k, n$.
In this subcase, inequalities \eqref{dmp-j1} become
\begin{equation}\label{s1-5}
\left(\int_{S^{n-k-1}}  \negthinspace\negthinspace\negthinspace \negthinspace\negthinspace\negthinspace  h_1(u_1)^{-q}\, du_1\right)^{\frac{1}q}
\le c'_4\left(   \int_{S^{n-k-1}} \negthinspace\negthinspace\negthinspace \negthinspace\negthinspace\negthinspace h_1(u_1)^{k-n}\, du_1\right)^{\frac1{n-k}}
=c'_5 (a_{k+1} \cdots a_n)^{\frac1{k-n}}
\le c'_6,
\end{equation}

Thus, using  \eqref{oops} and \eqref{s1-5}, gives
\[
\frac1q \log \int_{\sn} \left(\sum_{i=1}^n a_i |u\cdot e_i|\right)^{-q} \, du
\le \frac1q \log c'_2 + \log c'_6 +\frac1q\log (k\omega_k),
\]
which gives \eqref{d-mp-e1} in the subcase where $0< q<1$.
\end{proof}

\subsection{An elementary entropy-type inequality}\label{6.2}

As a technical tool, the following elementary entropy-type inequality
is needed.

\begin{lemm}\label{dmp-lem2}
Suppose $N\in(0,\infty)$ and $\alpha_1, \ldots, \alpha_n\in (0,\infty)$ are such that
\begin{equation}\label{sick}
\alpha_i + \cdots + \alpha_n
< 1-(i-1)/N,\ \text{for all $i>1$},\quad\text{while}\quad \alpha_1 + \cdots + \alpha_n=1.
\end{equation}
Then there exists a small $t>0$,
such that
\[
\sum_{i=1}^n \alpha_i \log a_i \le \frac{1+t}N \log (a_1\cdots a_n)
+ \left (1-\frac{n(1+t)}N\right) \log a_n.
\]
for all $a_1, \ldots, a_n\in(0,\infty)$, such that
$a_1 \le a_2 \le \cdots \le a_n$.
\end{lemm}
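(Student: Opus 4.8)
The plan is to observe that, after passing to logarithms, the asserted estimate is a comparison of two probability vectors paired with an arbitrary non-decreasing sequence; by a standard Abel summation this reduces to a comparison of tail sums, and the strictness of the inequalities in \eqref{sick} provides exactly the slack needed to insert a positive $t$. First I would set $b_i = \log a_i$, so that the condition $a_1 \le \cdots \le a_n$ becomes $b_1 \le \cdots \le b_n$ with the $b_i$ otherwise arbitrary, and abbreviate $s = (1+t)/N$. A short rearrangement shows that the right-hand side of the claimed inequality equals
\[
s\sum_{i=1}^n b_i + (1-ns)\,b_n = \sum_{i=1}^{n-1} s\,b_i + \bigl(1-(n-1)s\bigr)b_n =: \sum_{i=1}^n \beta_i b_i,
\]
where $\beta_i = s$ for $i<n$ and $\beta_n = 1-(n-1)s$, so that $\sum_i\beta_i = 1 = \sum_i\alpha_i$. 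Thus it suffices to prove $\sum_i\alpha_i b_i \le \sum_i\beta_i b_i$ for every non-decreasing $b_1 \le \cdots \le b_n$.

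Next I would carry out the Abel summation. Writing $c_j = b_j - b_{j-1} \ge 0$ for $2 \le j \le n$, one has $b_i = b_1 + \sum_{j=2}^i c_j$, and since $\sum_i(\beta_i - \alpha_i) = 0$,
\[
\sum_{i=1}^n (\beta_i - \alpha_i) b_i = \sum_{j=2}^n c_j \Bigl( \sum_{i=j}^n (\beta_i - \alpha_i) \Bigr).
\]
Because every $c_j \ge 0$, the desired inequality follows as soon as $\sum_{i=j}^n \beta_i \ge \sum_{i=j}^n \alpha_i$ for each $j \in \{2,\dots,n\}$; and a direct computation gives $\sum_{i=j}^n \beta_i = (n-j)s + \bigl(1-(n-1)s\bigr) = 1 - (j-1)s$ for all such $j$, the case $j=n$ included.

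To finish, I would choose $t$. By \eqref{sick}, for each $j \in \{2,\dots,n\}$ the number $\varepsilon_j := 1 - (j-1)/N - \sum_{i=j}^n \alpha_i$ is strictly positive, so
\[
t := \min_{2 \le j \le n} \frac{N\varepsilon_j}{j-1}
\]
is positive, being a minimum of finitely many positive reals. For this $t$ and every $j \in \{2,\dots,n\}$,
\[
1 - (j-1)s = 1 - \frac{j-1}{N} - \frac{(j-1)t}{N} \ \ge\ 1 - \frac{j-1}{N} - \varepsilon_j = \sum_{i=j}^n \alpha_i,
\]
which is exactly the tail inequality required above; this completes the proof.

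I do not expect a genuine obstacle: the only points calling for care are recognizing the (reverse) majorization structure hidden behind the logarithms and using the strictness of the hypotheses \eqref{sick} — were those inequalities merely non-strict, one would be forced to take $t=0$, which is not allowed. Everything else is a routine rearrangement.
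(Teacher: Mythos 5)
Your proof is correct and is essentially the paper's argument: both rest on an Abel summation that reduces the weighted comparison to tail-sum inequalities $\sum_{i\ge j}\alpha_i \le 1-(j-1)(1+t)/N$, with the strictness in \eqref{sick} supplying the room to choose $t>0$. The only difference is cosmetic (you work with the weight differences $\beta_i-\alpha_i$ and an explicit formula for $t$, while the paper shifts the $\alpha_i$ by $\lambda=(1+t)/N$ and checks that the partial sums $s_i$ are nonpositive).
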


Note that $t=t(\alpha_1, \ldots, \alpha_n, N)$ is independent of any of the $a_1, \ldots, a_n\in(0,\infty)$.

\begin{proof}
Let $t>0$ be sufficiently small such that, for all $i>1$,
\begin{equation}\label{st12}
\alpha_i + \cdots + \alpha_n < 1-\frac{i-1}N (1+t) = 1- (i-1)\lambda,
\end{equation}
where $\lambda = (1+t)/N$. Let,
\[
\text{$\beta_i=\alpha_i -\lambda$, for $i<n$, \quad while\quad
$\beta_n = \alpha_n +(n-1)\lambda -1$,}
\]
and let
\[
\text{$s_i=\beta_i + \cdots +\beta_n$,\quad for $i=1, \ldots, n$,\quad\quad and\quad $s_{n+1}=0$.}
\]
Then, not only is $s_{n+1}=0$, but also
\[
s_1 = \beta_1+\cdots+\beta_n = \alpha_1+\cdots+\alpha_n -(n-1)\lambda + (n-1)\lambda -1=0,
\]
while for $i>1$,
\[
s_i = \beta_i+\cdots+\beta_n = \alpha_i+\cdots+\alpha_n -(n-i+1)\lambda +n\lambda -1
=\alpha_i+\cdots+\alpha_n +(i-1)\lambda -1 <0,
\]
by \eqref{st12}.
Now,
\begin{align*}
\sum_{i=1}^n \beta_i  \log a_i &=\sum_{i=1}^n (s_i-s_{i+1})  \log a_i \\
&=\sum_{i=1}^n s_i  \log a_i - \sum_{i=1}^n s_{i+1}  \log a_i \\
&=\sum_{i=1}^{n-1} s_{i+1}  \log a_{i+1} +s_1 \log a_1 -  \sum_{i=1}^{n-1} s_{i+1}
\log a_i -s_{n+1} \log a_n \\
&=\sum_{i=1}^{n-1} s_{i+1}  (\log a_{i+1} - \log a_i) \\
&\le 0,
\end{align*}
since the $a_i$ are monotone non-decreasing. Therefore,
\begin{align*}
\sum_{i=1}^n \alpha_i  \log a_i &=
\sum_{i=1}^n (\beta_i+\lambda)  \log a_i +(1-n\lambda) \log a_n \\
&=\sum_{i=1}^n \beta_i \log a_i + \lambda \log(a_1\cdots a_n) +(1-n\lambda) \log a_n \\
&\le  \frac{1+t}N \log (a_1\cdots a_n) + \left (1-\frac{n(1+t)}N\right) \log a_n.
\end{align*}
\end{proof}

\subsection{Estimation of an entropy-type integral with respect to a measure}

We first define a partition of  the unit sphere. Then we use the partition to estimate an entropy type
integral by an entropy-type finite sum treated in section \ref{6.2}.

Let $e_1, \dots, e_n$ be a fixed orthonormal basis for $\rn$.
Relative to this basis, for each $i=1,\ldots,n$,
define $S^{n-i }=\sn \cap \spane\{e_{i}, \dots, e_{n}\}$. For convenience
define $S^{-1} = \varnothing$.

For small $\delta \in ( 0, \frac1{\sqrt{n}} )$, define a partition of $\sn$, with respect to the orthonormal basis $e_1, \dots, e_n$, by letting
\begin{equation}\label{partition}
\Omega_{i,\delta}=\{u\in \sn : \text{$|u\cdot e_i|\ge \delta$, and $|u\cdot e_j|<\delta$ for $j<i$}\},
\quad i=1, 2, \ldots, n.
\end{equation}
Explicitly,
\begin{align*}
\Omega_{1,\delta}&=\{u\in \sn : |u\cdot e_1|\ge \delta \} \\
\Omega_{2,\delta}&=\{u\in \sn : |u\cdot e_2|\ge \delta, \ |u\cdot e_1|<\delta \} \\
\Omega_{3,\delta}&=\{u\in \sn : |u\cdot e_3|\ge \delta,
\ |u\cdot e_1|<\delta, \ |u\cdot e_2|<\delta \} \\
&\ \, \vdots \\
\Omega_{n,\delta}&=\{u\in \sn : |u\cdot e_n|\ge \delta, \ |u\cdot e_1|<\delta, \, \dots, \,
|u\cdot u_{n-1}|<\delta \}.
\end{align*}
These sets are non-empty since $e_i \in \Omega_{i,\delta}$. They are obviously disjoint.
For $\delta \in (0,\frac1{\sqrt n})$ and each $u\in \sn$, there is an $e_i$ such that
$|u\cdot e_i| \ge \delta$ and for the smallest such $i$, say $i_0$, we'll have $u\in \Omega_{{i_0},\delta}$. Thus, the union of $\Omega_{i,\delta}$ covers $\sn$.

If we let
\begin{align*}
\Omega'_{i,\delta}
&=
\{u\in \sn : \text{$|u\cdot e_i|\ge \delta$, and $|u\cdot e_j|=0$ for $j<i$}\} \\
\shortintertext{and}
\Omega''_{i,\delta}
&=
\{u\in \sn : \text{$|u\cdot e_i|>0$, and $|u\cdot e_j|<\delta$ for $j<i$}\},
\end{align*}
then
\begin{equation}\label{parot}
\Omega'_{i,\delta} \subset \Omega_{i,\delta} \subset \Omega''_{i,\delta}.
\end{equation}
As $\delta$ decreases to 0, the set $\Omega'_{i,\delta}$ increases (with respect to set inclusion)
to $S^{n-i} \setminus S^{n-i-1}$, while
 $\Omega''_{i,\delta}$ decreases
to $S^{n-i} \setminus S^{n-i-1}$.

Suppose now that $\mu$ is
a finite Borel measure
on $\sn$. From the definitions $\Omega'_{i,\delta}$ and $\Omega''_{i,\delta}$,
we conclude that
\begin{align*}
\lim_{\delta \to 0^+} \mu(\Omega'_{i,\delta})
= \mu(S^{n-i} \setminus S^{n-i-1}) \\
\shortintertext{and also}
\lim_{\delta \to 0^+} \mu(\Omega''_{i,\delta})
= \mu(S^{n-i} \setminus S^{n-i-1}).
\end{align*}
This, together with \eqref{parot}, gives
\begin{equation}\label{partition1}
\lim_{\delta \to 0^+} \mu(\Omega_{i,\delta})
= \mu(S^{n-i} \setminus S^{n-i-1}).
\end{equation}
It follows that, for each integer $1\le k \le n$,
\begin{equation}\label{partition2}
\lim_{\delta \to 0^+} \mu\Big(\bigcup_{i=k}^n\Omega_{i,\delta}\Big)
=\lim_{\delta \to 0^+}\sum_{i=k}^n \mu(\Omega_{i,\delta})
=\sum_{i=k}^n \mu(S^{n-i} \setminus S^{n-i-1})
= \mu(S^{n-k}),
\end{equation}
where $S^{n-k}=\sn \cap \spane\{e_{k}, \dots, e_{n}\}$.

\begin{lemm}\label{dmp-lem3}
Let $\mu$ be a finite Borel measure on $\sn$.  For $l=1,2,\dots$, let $a_{1l}, \dots, a_{nl}$ be $n$ sequences
in $(0,\infty)$, and let $e_{1l}, \dots, e_{nl}$ be a sequence  of orthonormal bases in $\rn$, that
converges to the orthonormal basis $e_1, \dots, e_n$. For each $i=1,\ldots,n$, and small $\delta \in ( 0, \frac1{\sqrt{n}} )$, let
\[
\Omega_{i,\delta}=\{u\in \sn : \text{$|u\cdot e_i|\ge \delta$, and $|u\cdot e_j|<\delta$ for $j<i$}\}.
\]
Then for a given small $\delta>0$, there exists an integer $L$ such that, for all $l>L$,
\[
\frac{1}{|\mu|} \int_{\sn} \log \sum_{i=1}^n \frac{|u\cdot e_{il}|}{a_{il}}\, d\mu(u)\
\ge\
\log\frac\delta2 -
\sum_{i=1}^n \frac{\mu(\Omega_{i,\delta})}{|\mu|} \log a_{il} .
\]
\end{lemm}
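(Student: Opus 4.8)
The plan is to use the partition $\{\Omega_{i,\delta}\}_{i=1}^n$ of $\sn$ to reduce the integral to a sum over cells, exploiting that on $\Omega_{i,\delta}$ the coordinate $|u\cdot e_i|$ is bounded below by $\delta$, and that once $l$ is large the perturbed vector $e_{il}$ is close enough to $e_i$ that $|u\cdot e_{il}|$ still exceeds $\delta/2$ there. Keeping only the $i$-th summand of $\sum_j |u\cdot e_{jl}|/a_{jl}$ then gives a pointwise lower bound for the integrand on each cell, and integrating cell-by-cell produces the asserted sum.

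First I would fix the given small $\delta$ and, using $e_{il}\to e_i$ for each of the finitely many indices $i$, choose $L$ so that $|e_{il}-e_i|<\delta/2$ for all $i$ whenever $l>L$. Then for $l>L$ and $u\in\Omega_{i,\delta}$ one has $|u\cdot e_{il}|\ge|u\cdot e_i|-|e_{il}-e_i|>\delta-\tfrac\delta2=\tfrac\delta2$, so, discarding the nonnegative terms with $j\ne i$,
\[
\log\sum_{j=1}^n\frac{|u\cdot e_{jl}|}{a_{jl}}\ \ge\ \log\frac{|u\cdot e_{il}|}{a_{il}}\ \ge\ \log\frac\delta2-\log a_{il},\qquad u\in\Omega_{i,\delta}.
\]
Since the $\Omega_{i,\delta}$ are disjoint Borel sets covering $\sn$ (as established in the construction of the partition above), I would then split
\[
\int_{\sn}\log\sum_{j=1}^n\frac{|u\cdot e_{jl}|}{a_{jl}}\,d\mu(u)=\sum_{i=1}^n\int_{\Omega_{i,\delta}}\log\sum_{j=1}^n\frac{|u\cdot e_{jl}|}{a_{jl}}\,d\mu(u)\ \ge\ \sum_{i=1}^n\mu(\Omega_{i,\delta})\Big(\log\frac\delta2-\log a_{il}\Big),
\]
and conclude by dividing through by $|\mu|$ and using $\sum_{i=1}^n\mu(\Omega_{i,\delta})=\mu(\sn)=|\mu|$.

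There is essentially no deep obstacle here: the only step requiring a moment's care is the uniform choice of $L$ making $e_{il}$ close to $e_i$ across all $i$ simultaneously, which is immediate since there are finitely many indices. For completeness one should note that the integrand is bounded — below because $\sum_j|u\cdot e_{jl}|\ge|u|=1$ for an orthonormal frame, above trivially — hence $\mu$-integrable, and that the sign of $\log(\delta/2)-\log a_{il}$ is irrelevant since the estimate is a pointwise inequality preserved under integration. The lemma is thus a bookkeeping device that converts the entropy-type integral against $\mu$ into the finite entropy-type sum treated in Lemma \ref{dmp-lem2}.
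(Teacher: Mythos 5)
Your proposal is correct and follows essentially the same route as the paper's proof: choose $L$ so that $|e_{il}-e_i|<\delta/2$ for all $i$, deduce $|u\cdot e_{il}|\ge\delta/2$ on $\Omega_{i,\delta}$, keep only the $i$-th term of the sum on each cell of the partition, and sum over $i$. No gaps.
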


\begin{proof}
Since $e_{1l}, \dots, e_{nl}$ converge to
$e_1, \dots, e_n$,
for a given small $\delta>0$, there exists an $L$ such that for all $l>L$,
\[
|e_{il}-e_i|<\frac\delta2,
\]
for all $i$.
Then for $l>L$, for $u\in \Omega_{i,\delta}$,
\begin{equation}\label{s3.1}
|u\cdot e_{il}| \ge |u\cdot e_i| - |u\cdot (e_{il}-e_i)|
\ge |u\cdot e_i| - |e_{il}-e_i|
\ge \frac\delta2,
\end{equation}
for all $i$.
Therefore, for $l>L$, by using the partition $\sn = \cup_{i=1}^n \Omega_{i,\delta}$,
together with \eqref{s3.1}, we have
\begin{align*}
\int_{\sn} \log \sum_{j=1}^n \frac{|u\cdot e_{jl}|}{a_{jl}}\, d\mu(u)
&=
\sum_{i=1}^n \int_{\Omega_{i,\delta}} \log \sum_{j=1}^n \frac{|u\cdot e_{jl}|}{a_{jl}}\, d\mu(u) \\
&\ge
 \sum_{i=1}^n \int_{\Omega_{i,\delta}} \log \frac{|u\cdot e_{il}|}{a_{il}}\, d\mu(u) \\
&=
\sum_{i=1}^n \int_{\Omega_{i,\delta}} \log |u\cdot e_{il}| \, d\mu
-\sum_{i=1}^n \mu(\Omega_{i,\delta}) \log a_{il}\\
&\ge
 \mu(\sn) \log\frac\delta2
- \sum_{i=1}^n \mu(\Omega_{i,\delta}) \log a_{il}.
\end{align*}
\end{proof}

\subsection{The subspace mass inequality and non-degeneracy}

Let $\mu$ be a non-zero finite Borel measure on $\sn$.
Fix an ordered orthonormal basis
$\beta: e_1, \dots, e_n$ in $\rn$. Let $\xi_{n-i+1} =\spane\{e_i, \dots, e_n\}$
be the subspace spanned by $e_i, \dots, e_n$.  

Suppose $q\in[1,n]$.
We will say that $\mu$ satisfies the {\it $q$-th subspace mass inequality with respect to
the basis $\beta$}, if
\begin{equation}\label{smi}
\frac{\mu({\sn}\cap \xi_{n-i})}{|\mu|} < 1-\frac{i}{(n-1)q'},
\end{equation}
for all $i<n$. We will say that $\mu$ satisfies the {\it $q$-th subspace mass inequality} if it does so with respect to
every orthonormal basis.

For $q\in(0,1)$, we will say that $\mu$ satisfies the $q$-th subspace mass inequality, with respect to
the basis $\beta$, if
\begin{equation}\label{smi-1}
\frac{\mu({\sn}\cap \xi_{n-1})}{|\mu|} < 1,
\end{equation}
and if this is the case for every orthonormal basis, we shall say that $\mu$ satisfies the {\it $q$-th subspace mass inequality}.

When $q=1$, obviously $q'=\infty$, and thus the measure $\mu$ satisfies the  $q$-th subspace mass inequality \eqref{smi} if
\begin{equation*}
\frac{\mu({\sn}\cap \xi_{n-i})}{|\mu|} < 1,
\end{equation*}
for all $i<n$, for each basis $\beta$. Here, obviously the case $i=1$ implies all $i<n$. Observe that this is equivalent to the definition for $q\in (0,1)$.

When $q=n$, the subspace mass inequality \eqref{smi}  is also called
the {\it strict subspace concentration} condition, see \cite{BLYZ13jams},
\[
\frac{\mu({\sn}\cap \xi)}{|\mu|} < \frac{\dim(\xi)}{n},
\]
for each subspace $\xi$.

The following lemma will be used to show that the limit of a maximizing sequence,
 for the maximization problem associated with the dual Minkowski problem, will not be
 a degenerate compact convex set provided that the given measure satisfies the subspace mass inequality.

\begin{lemm}\label{dmp-lem4}
Suppose $\mu$ is a non-zero finite Borel measure on $\sn$ and  $q\in(0,n]$.
Suppose
$a_{1l}, \ldots, a_{nl}$ are $n$ sequences in $(0,\infty)$,
for which there exists an $\varepsilon_0>0$ and $M_0\in (0,\infty)$
such that, for all $l$,
\[
a_{1l}\le a_{2l}\le \dots \le a_{nl}\le M_0,
\]
and for some integer, $1\le k<n$,
\[
a_{1l}, \dots, a_{kl} \to 0\ \text{as $l\to\infty$,\quad while}\quad
a_{k+1,l}, \ldots, a_{n,l} > \varepsilon_0.
\]
Let $e_{1l}, \dots, e_{nl}$
be a sequence of orthonormal bases in $\rn$ that
converges to an orthonormal basis $e_1, \dots, e_n$.
If $\mu$ satisfies the $q$-th subspace mass inequality, with respect to the
orthonormal basis $e_1, \dots, e_n$, then,
\begin{equation}
-\frac{1}{|\mu|} \int_{\sn} \log\, \sum_{i=1}^n \frac{|u\cdot e_{il}|}{a_{il}}\, d\mu(u)\ + \
\frac1q \log \int_{\sn} \left(\sum_{i=1}^n a_{il} |u\cdot e_{il}|\right)^{-q} du\
\to\  -\infty,
\end{equation}
as $l\to \infty$.
\end{lemm}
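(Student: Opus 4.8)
The plan is to combine the three preparatory lemmas of this section. Fix for the moment $\delta\in(0,1/\sqrt n)$ and form the partition $\sn=\bigcup_{i=1}^n\Omega_{i,\delta}$ relative to the limiting basis $e_1,\dots,e_n$. First I apply Lemma~\ref{dmp-lem1} directly to the orthonormal basis $e_{1l},\dots,e_{nl}$ (its hypotheses hold because $a_{k+1,l},\dots,a_{nl}>\varepsilon_0$); this produces an exponent $N_1$ (finite when $q\ge1$, equal to $+\infty$ when $0<q<1$) and a constant $c_0$, both independent of $l$, with
\[
\tfrac1q\log\int_{\sn}\Big(\sum_i a_{il}|u\cdot e_{il}|\Big)^{-q}\,du\le-\tfrac1{N_1}\log(a_{1l}\cdots a_{kl})+c_0 .
\]
Next, Lemma~\ref{dmp-lem3} gives, for all $l$ past some $L=L(\delta)$,
\[
-\tfrac1{|\mu|}\int_{\sn}\log\sum_i\tfrac{|u\cdot e_{il}|}{a_{il}}\,d\mu(u)\le-\log\tfrac\delta2+\sum_{i=1}^n\alpha_i^\delta\log a_{il},\qquad \alpha_i^\delta:=\tfrac{\mu(\Omega_{i,\delta})}{|\mu|} .
\]
Adding these, the left side of the assertion is at most $\sum_{i=1}^n\alpha_i^\delta\log a_{il}-\tfrac1{N_1}\sum_{i=1}^k\log a_{il}+C_\delta$, with $C_\delta=c_0-\log(\delta/2)$ independent of $l$.

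Now I estimate $\sum_{i=1}^n\alpha_i^\delta\log a_{il}$ by Lemma~\ref{dmp-lem2}, applied to $\alpha_1^\delta,\dots,\alpha_n^\delta$ with a \emph{second} exponent $N_2$ (finite, $>n-1$): for $a_{1l}\le\cdots\le a_{nl}$ it yields some $t=t(\alpha^\delta,N_2)>0$ with
\[
\sum_{i=1}^n\alpha_i^\delta\log a_{il}\le\tfrac{1+t}{N_2}\sum_{i=1}^n\log a_{il}+\Big(1-\tfrac{n(1+t)}{N_2}\Big)\log a_{nl},
\]
provided the tail bounds $\sum_{j=i}^n\alpha_j^\delta<1-\tfrac{i-1}{N_2}$ hold for $i=2,\dots,n$. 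Since $\sum_{j=i}^n\alpha_j^\delta=\mu\big(\bigcup_{j=i}^n\Omega_{j,\delta}\big)/|\mu|\to\mu(S^{n-i})/|\mu|$ as $\delta\to0^+$ by \eqref{partition2}, and the $q$-th subspace mass inequality with respect to $e_1,\dots,e_n$ gives $\mu(S^{n-i})/|\mu|<1-\tfrac{i-1}{(n-1)q'}$ when $q\ge1$ and $\mu(S^{n-i})/|\mu|<1$ when $q<1$, one can fix $N_2$ with all these limits strictly below $1-\tfrac{i-1}{N_2}$: for $1<q\le n$ take $N_2=(n-1)q'$ (which is $n$ when $q=n$), and for $0<q\le1$ take $N_2$ a sufficiently large finite number. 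Then for $\delta$ small enough, depending only on $N_2$ and the $\mu(S^{n-i})$, the tail bounds hold. Substituting into the previous estimate and dropping the terms that remain bounded as $l\to\infty$ --- namely $\log a_{il}$ for $i>k$ and $\log a_{nl}$, all trapped between $\log\varepsilon_0$ and $\log M_0$ --- leaves an upper bound
\[
\Big(\tfrac{1+t}{N_2}-\tfrac1{N_1}\Big)\sum_{i=1}^k\log a_{il}+C_\delta',
\]
with $C_\delta'$ bounded in $l$.

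The remaining --- and I expect the only delicate --- point is to make the coefficient $\tfrac{1+t}{N_2}-\tfrac1{N_1}$ strictly positive, i.e.\ $N_1>N_2/(1+t)$, which is exactly where the numerology of the subspace mass inequality is spent. For $1<q<n$ Lemma~\ref{dmp-lem1} forces $N_1<(n-1)q'=N_2$, so the two exponents genuinely differ; but since $t>0$ we have $N_2/(1+t)<N_2=(n-1)q'$, and also $n<(n-1)q'$, so $N_1$ can be chosen in the nonempty interval $\big(\max\{n,\,N_2/(1+t)\},\,(n-1)q'\big)$ --- and only after this choice are $N_1$, hence $c_0$, hence $C_\delta'$, pinned down. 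For $q=n$ one simply takes $N_1=N_2=n$, with coefficient $t/n>0$; for $q=1$ one takes $N_1>\max\{n,\,N_2/(1+t)\}$ (allowed since here $N_1$ may be any number $>n$); and for $0<q<1$ one has $N_1=+\infty$, so the $-\tfrac1{N_1}$-term is absent and the coefficient is $\tfrac{1+t}{N_2}>0$. With $\delta$ (and therefore $N_2,t,N_1,c_0,C_\delta'$) now fixed and the coefficient positive, the displayed quantity tends to $-\infty$ as $l\to\infty$, because $a_{1l},\dots,a_{kl}\to0$ forces $\sum_{i=1}^k\log a_{il}\to-\infty$; since the left side of the lemma is bounded above by it, the lemma follows. (One minor check: some $\mu(\Omega_{i,\delta})$ may be $0$, but the proof of Lemma~\ref{dmp-lem2} uses only $\sum_i\alpha_i^\delta=1$, the tail bounds, and the monotonicity of the $a_{il}$, so it applies verbatim with $\alpha_i^\delta\ge0$.)
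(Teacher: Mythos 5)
Your proof is correct and follows essentially the same route as the paper's: the same three preparatory lemmas are combined in the same order, with the strict slack in the subspace mass inequality converted (via small $\delta$) into the positive $t$ of Lemma \ref{dmp-lem2} that makes the coefficient of $\log(a_{1l}\cdots a_{kl})$ positive. The only difference is bookkeeping: the paper picks a single exponent $N\in(n,(n-1)q')$ inside the slack and uses it in both Lemma \ref{dmp-lem1} and Lemma \ref{dmp-lem2}, so the coefficient is automatically $t/N>0$ and no matching of $N_1$ against $N_2/(1+t)$ is required, and for $0<q<1$ the paper dispenses with Lemma \ref{dmp-lem2} altogether, using only that $\alpha_1(\delta)>0$ together with the upper bound $M_0$.
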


\begin{proof} First, consider the case $q \in [1,n]$.

When $q \in [1,n)$, we use the fact that $\mu$ satisfies the $q$-th subspace mass inequality, with respect to the orthonormal basis $e_1, \dots, e_n$, to deduce the existence of an $N\in (n,(n-1)q')$ such that, for all $i>1$,
\begin{equation}\label{s4.1}
\frac{\mu({\sn}\cap\xi_{n-i+1})}{|\mu|}
< 1-\frac{i-1}N \le 1-\frac{i-1}{(n-1)q'},
\end{equation}
where $\xi_{n-i+1}=\spane\{e_i, \dots, e_n\}$. When $q=n$, let $N=n$, and note that \eqref{s4.1} holds.

For a small $\delta>0$, and each $i=1, \dots, n$, let $\Omega_{i,\delta}$ be the partition
defined in \eqref{partition}, with respect to the orthonormal basis $e_1, \dots, e_n$, and
let
\[
\alpha_i = \alpha_i(\delta) = \frac{\mu(\Omega_{i,\delta})}{|\mu|},
\]
for each $i$. From \eqref{partition2}, we see that, as $\delta \to 0^+$,
\[
\alpha_i + \cdots +\alpha_n \to \frac{\mu({\sn}\cap\xi_{n-i+1})}{|\mu|}\]
for each $i$.  This and \eqref{s4.1} tells us that
we can choose $\delta>0$ sufficiently small so that
\[
\alpha_i + \cdots +\alpha_n <  1-\frac{i-1}N,
\]
for each $i>1$. Note that the $\alpha_i$ satisfy the conditions of \eqref{sick} of Lemma \ref{dmp-lem2}.

The fact that $\alpha_i = {\mu(\Omega_{i,\delta})}/{|\mu|}$, when combined with
Lemma \ref{dmp-lem3}, followed by the fact that $a_{k+1,l}, \ldots, a_{nl} > \varepsilon_0$,
together with Lemma \ref{dmp-lem1}, and followed by
given that
$a_{1l}\le a_{2l}\le \dots \le a_{nl}$,
Lemma \ref{dmp-lem2}, yields the existence of a $t>0$, such that
\begin{align}\label{z0z}
-\frac{1}{|\mu|} \int_{\sn} &\log \,\sum_{i=1}^n \frac{|u\cdot e_{il}|}{a_{il}} \,d\mu(u)
+\frac1q \log \int_{\sn} \left(\sum_{i=1}^n a_{il} |u\cdot e_{il}|\right)^{-q} \, du\nonumber \\
&\le \sum_{i=1}^n \alpha_i \log a_{il} -\log \frac\delta2 - \frac1N \log (a_{1l} \cdots a_{kl}) + c_0 \\
&\le \frac{1+t}N \log(a_{1l}\cdots a_{nl}) +\left (1-\frac{n(1+t)}N\right) \log a_{nl}
-\log\frac\delta2 - \frac1N \log(a_{1l}\cdots a_{kl})  + c_0. \nonumber
\end{align}
Since the $a_{k+1,l}, \ldots, a_{n,l}$ are bounded (for all $l$) from below (by $\varepsilon_0$ ) and above (by $M_0$), the last expression in \eqref{z0z} is bounded from above by
$\frac tN \log(a_{1l}\cdots a_{kl})$ plus a quantity independent of $l$. But, since by hypothesis
$a_{1l}, \dots, a_{kl} \to 0$ as $l\to\infty$, obviously
as $l\to \infty$ the last quantity in \eqref{z0z} tends to $-\infty$.
This establishes the desired result for the case where $q\in [1,n]$.

Now suppose $q\in (0,1)$. From \eqref{partition1} we see that
\[
\lim_{\delta\to 0^{+}}\alpha_1 = \lim_{\delta\to 0^{+}} \frac{\mu(\Omega_{1,\delta})}{|\mu|}
= \frac{\mu(S^{n-1} \setminus S^{n-2})}{|\mu|} = 1- \frac{\mu(\sn\cap \xi_{n-1})}{|\mu|}>0,
\]
where the last inequality follows from \eqref{smi-1}. 
Choose $\delta>0$ so that
$\alpha_1 = \alpha_1 (\delta)>0$, and since $\alpha_1 + \cdots + \alpha_n=1$, we have
\[
\text{ $\alpha_1>0$\quad and\quad $\alpha_2 + \cdots + \alpha_n < 1$.}
\]
From Lemmas \ref{dmp-lem3} and  \ref{dmp-lem1} we have, for sufficiently large $l$,
\begin{align*}
-\frac{1}{|\mu|} \int_{\sn} &\log \, \sum_{i=1}^n \frac{|u\cdot e_{il}|}{a_{il}}\, d\mu(u)
+\frac1q \log \int_{\sn} \left(\sum_{i=1}^n a_{il} |u\cdot e_{il}|\right)^{-q} \, du \\
&\le \sum_{i=1}^n \alpha_i \log a_{il} -\log \frac\delta2 + c_0,
\end{align*}
where $c_0$ is a constant independent of the sequences $a_{1l}, \ldots, a_{nl}$.

Since $\alpha_1$ is positive,
and $a_{1l} \to 0^+$ as $l\to\infty$,
we have $\alpha_1 \log a_{1l} \to -\infty$. This and the assumption that
$a_{1l}, \ldots, a_{nl}$ are bounded from above, allows
us to conclude that, as $l\to \infty$,
\[
\sum_{i=1}^n \alpha_i \log a_{il}\  \to\  -\infty.
\]
This establishes the desired result for the case where $q\in (0,1)$.
\end{proof}

\subsection{Existence of a solution to the maximization problem}

The following lemma establishes existence of solutions to the maximization problem
associated with the dual Minkowski problem.

\begin{lemm}\label{dmp-lem5}
Suppose $q\in (0,n]$, and $\mu$ is a non-zero, finite Borel measure on $\sn$. Suppose that the functional $\Phi:\kne \to \rbo$ is defined for $Q\in\kne$ by
\[
\Phi(Q) = \frac1{|\mu|} \int_{\sn} \log \rho_Q(u)\, d\mu(u)
+\frac1q \log \int_{\sn} h_Q(u)^{-q}\, du.
\]
If $\mu$ satisfies the $q$-th subspace mass inequality, then there exists a $K\in\kne$ so that
\[
\sup\nolimits_{Q\in\kne} \Phi(Q) = \Phi(K).
\]
\end{lemm}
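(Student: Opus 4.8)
The plan is the direct method: extract a maximizing sequence, normalize it using the homogeneity of $\Phi$, and then show — using the estimates already established — that the sequence is bounded and does not collapse onto a lower-dimensional subspace, so that a subsequential Hausdorff limit is the desired maximizer. Note first that $\Phi$ is finite on $\kne$ (both $\rho_Q$ and $h_Q$ are continuous and bounded away from $0$ and $\infty$ on $\sn$) and is homogeneous of degree $0$: replacing $Q$ by $\lambda Q$ scales $\rho_Q$ and $h_Q$ by $\lambda$, and the two resulting $\log\lambda$ terms cancel. Moreover $\Phi$ is continuous for the Hausdorff metric on $\kne$, since $Q_l\to K$ in $\kne$ forces $\rho_{Q_l}\to\rho_K$ and $h_{Q_l}\to h_K$ uniformly with uniform positive bounds, so $\int_{\sn}\log\rho_{Q_l}\,d\mu\to\int_{\sn}\log\rho_K\,d\mu$ and $\int_{\sn}h_{Q_l}^{-q}\,du\to\int_{\sn}h_K^{-q}\,du$. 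Finally $\sup_{Q\in\kne}\Phi(Q)\ge\Phi(B)=\tfrac1q\log(n\omega_n)>-\infty$, so there is a maximizing sequence $Q_l\in\kne$ with $\Phi(Q_l)\to\sup_{\kne}\Phi$.

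For each $l$, let $E_l$ be the John ellipsoid of $Q_l$, with semi-axes $0<a_{1l}\le\cdots\le a_{nl}$ along an orthonormal basis $e_{1l},\dots,e_{nl}$ of $\rn$; by John's theorem for origin-symmetric bodies $E_l\subseteq Q_l\subseteq\sqrt n\,E_l$, hence
\[
C_l:=\conv\{\pm a_{il}e_{il}\}\ \subseteq\ Q_l\ \subseteq\ B_l:=\{x\in\rn:|x\cdot e_{il}|\le\sqrt n\,a_{il}\ \text{for all}\ i\}.
\]
Using degree-$0$ homogeneity, rescale each $Q_l$ so that $a_{nl}=1$; then $0<a_{il}\le 1$ for all $i,l$. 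Passing to a subsequence, $a_{il}\to a_i\in[0,1]$ with $a_1\le\cdots\le a_n=1$, and $e_{il}\to e_i$ for some orthonormal basis $e_1,\dots,e_n$. If all $a_i>0$, then $C_l$ converges to the nondegenerate body $\conv\{\pm a_ie_i\}$, so for large $l$ the $Q_l$ contain a fixed ball; they are also uniformly bounded (being contained in $B_l\subseteq nB$), so the Blaschke selection theorem yields a further subsequence with $Q_l\to K\in\kne$, and continuity of $\Phi$ gives $\Phi(K)=\lim\Phi(Q_l)=\sup_{\kne}\Phi$, so $K$ is the required maximizer.

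It remains to exclude the degenerate case $a_1=\cdots=a_k=0<a_{k+1}$ for some $1\le k<n$. Set $\varepsilon_0=\tfrac12a_{k+1}>0$, so that for large $l$ one has $a_{1l},\dots,a_{kl}\to0$, $a_{k+1,l},\dots,a_{nl}>\varepsilon_0$, and $a_{1l}\le\cdots\le a_{nl}\le1$. From $C_l\subseteq Q_l\subseteq B_l$ we get $\rho_{Q_l}\le\rho_{B_l}$ and $h_{C_l}\le h_{Q_l}$; combining these with the identities $\rho_{B_l}(u)=\sqrt n\big(\max_i|u\cdot e_{il}|/a_{il}\big)^{-1}$ and $h_{C_l}(u)=\max_i a_{il}|u\cdot e_{il}|$ together with the elementary bound $\max_i t_i\ge\tfrac1n\sum_i t_i$, we obtain a constant $c=c(n)$ with
\[
\Phi(Q_l)\ \le\ c\ -\ \frac1{|\mu|}\int_{\sn}\log\Big(\sum_{i=1}^n\frac{|u\cdot e_{il}|}{a_{il}}\Big)\,d\mu(u)\ +\ \frac1q\log\int_{\sn}\Big(\sum_{i=1}^n a_{il}|u\cdot e_{il}|\Big)^{-q}\,du.
\]
Since $\mu$ satisfies the $q$-th subspace mass inequality with respect to $e_1,\dots,e_n$, Lemma \ref{dmp-lem4} forces the right-hand side to tend to $-\infty$ as $l\to\infty$, contradicting $\Phi(Q_l)\to\sup_{\kne}\Phi>-\infty$. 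Hence this case cannot occur, and the proof is complete.

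The genuine content of the argument sits in this last (nondegeneracy) step, which has been reduced to Lemma \ref{dmp-lem4} — and, behind it, to the cross-polytope estimate of Lemma \ref{dmp-lem1} and the entropy inequality of Lemma \ref{dmp-lem2}. The delicate point is that as $Q_l$ collapses onto a proper subspace, its radial (entropy) term and its dual-quermassintegral term diverge with opposite signs, and the $q$-th subspace mass inequality is precisely the quantitative balance guaranteeing that their sum still goes to $-\infty$; the remaining ingredients (homogeneity, continuity of $\Phi$, John's theorem, Blaschke selection) are routine.
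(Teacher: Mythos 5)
Your proof is correct and follows essentially the same route as the paper: a maximizing sequence normalized via degree-$0$ homogeneity, the John ellipsoid sandwiched between a parallelotope/cross-polytope pair to bound $\Phi(Q_l)$ by the entropy-plus-dual-quermassintegral expression, and Lemma \ref{dmp-lem4} to rule out degeneration. The only cosmetic differences are that the paper normalizes by diameter rather than by the largest John semi-axis and uses a single parallelotope sandwich $P_l\subset Q_l\subset nP_l$ in place of your separate inner cross-polytope and outer box, which changes nothing of substance.
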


\begin{proof}
Let $Q_l$ be a maximizing sequence of origin-symmetric convex bodies; i.e.
\[
\lim\nolimits_{l\to \infty}\Phi(Q_l) = \sup\nolimits_{Q\in\kne} \Phi(Q).
\]
Since $\Phi(\lambda Q) = \Phi(Q)$ for $\lambda>0$, we can assume that
the diameter of $Q_l$ is 1. By the Blaschke selection theorem, $Q_l$ has
a convergent subsequence, denoted again by $Q_l$, with limit $K$, which must be
an origin-symmetric compact convex set.
We will prove that $K$ is not degenerate; i.e., $K$ has non-empty interior.

Let $E_l\in\kne$ be the John ellipsoid associated with $Q_l$, that is, the ellipsoid of maximal volume
contained in $Q_l$. Then, as is well known (see Schneider \cite{S14}, p.\ 588),
\[
E_l \subset Q_l \subset \sqrt n E_l.
\]

For each ellipsoid $E_l$, there is a right parallelotope $P_l$ so that
\[
P_l \subset E_l \subset \sqrt n P_l.
\]
This is easily seen when $E_l$ is a ball.
The general case can easily be seen as follows:
Transform $E_l$ into ball using
an affine transformation whose eigenvectors are along the principal axes of $E_l$ and whose eigenvalues are chosen so that $E_l$ is transformed into a ball.
Therefore,
\begin{equation}\label{s5.1}
P_l \subset Q_l \subset n P_l.
\end{equation}
But this means that
\begin{equation}\label{s555}
\text{$\rho_{Q_l} \le \rho_{n P_l} = n\rho_{P_l}$\quad and \quad $h_{P_l} \le h_{Q_l}$.}
\end{equation}

The support function of the right parallelotope $P_l$ can be written, for $u\in\sn$, as
\begin{equation}\label{s5.3}
h_{P_l}(u) = \sum_{i=1}^n a_{il} |u\cdot e_{il}|,
\end{equation}
where the orthonormal basis $e_{1l},\ldots.e_{nl}$ is ordered so that
$0<a_{1l} \le \cdots \le a_{nl}$. The radial function of $P_l$, for $u\in\sn$, is given by
\[
\rho_{P_l}(u) = \min\nolimits_{1\le i \le n} \frac{a_{il}}{|u\cdot e_{il}|}.
\]
Thus,
\begin{equation}\label{s5.2}
\rho_{P_l}(u) \le \left(\frac1n\,\sum_{i=1}^n  \frac{|u\cdot e_{il}|}{a_{il}}\right)^{-1}.
\end{equation}
Therefore, from \eqref{s555}, \eqref{s5.2}, and \eqref{s5.3},  we have
\begin{align}\label{dmp-lem5-1}
\Phi(Q_l) &\le \Phi(P_l) + \log n \\
&\le \frac1{|\mu|} \int_{\sn} \log \left(\sum_{i=1}^n \frac{|u\cdot e_{il}|}{a_{il}}\right)^{\negthinspace\negthinspace{-1}} d\mu(u)
+\frac1q \log \int_{\sn} \left(\sum_{i=1}^n a_{il} |u\cdot e_{il}|\right)^{\negthinspace\negthinspace{-q}} du + 2\log n. \nonumber
\end{align}

Since the diameter of each $Q_l$ is $1$, the parallelotopes $P_l$ are bounded. Using the Blaschke selection theorem, we conclude that sequence
$P_l$ has a convergent subsequence, denoted again by $P_l$, whose limit we call $P$.

Since, $Q_l \to K$, and $P_l \subset Q_l$, while $P_l \to P$, we must have $P \subset K$. Suppose that $K$ has empty interior in $\rn$. Hence,
$P$ is a degenerated right parallelotope.
Thus, there exists a $k$ such that $1\le k < n$ and a $\varepsilon_0>0$ so that $a_{1l}, \dots, a_{kl} \to 0^+$, while
$a_{k+1, l}, \dots, a_{nl} \ge \varepsilon_0 >0$, for all $l$.
Moreover, as $l\to\infty$, the orthonormal basis
$e_{1l}, \dots, e_{nl}$
converges to
an orthonormal basis
$e_{1}, \dots, e_{n}$. (This might require taking subsequences and replacing some $e_i$ by $-e_i$.)

From \eqref{dmp-lem5-1} and Lemma \ref{dmp-lem4},
$\Phi(Q_l) \to -\infty$ as $l\to \infty$. Since $Q_l$ is a maximizing sequence,
\[
\lim_{l\to \infty} \Phi(Q_l) \ge \Phi(B) = \frac1q \log(n\omega_n).
\]
Thus we have the contradiction, that shows that $K$ must have non-empty interior.
\end{proof}

\subsection{Existence of a solution to the dual Minkowski problem}

The main existence theorem for the dual Minkowski problem stated in Introduction is implied
by the following theorem.

\begin{theo}
Suppose $\mu$ is a non-zero finite even Borel measure on $\sn$ and $q\in (0,n]$.
If the measure $\mu$ satisfies the $q$-th subspace mass inequality,
then there exists an origin-symmetric convex body $K$ in $\rn$ so that
$\wt C_q(K,\cdot) = \mu.$
\end{theo}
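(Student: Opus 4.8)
The plan is to prove this by assembling two results already in hand: the variational reduction of Lemma~\ref{lem-max2}, which says that a suitably normalized maximizer of the functional $\Phi_\mu$ of \eqref{phi} over origin-symmetric bodies solves $\wt C_q(K,\cdot)=\mu$, and Lemma~\ref{dmp-lem5}, which produces a maximizer of an essentially equivalent functional precisely when $\mu$ obeys the $q$-th subspace mass inequality \eqref{smi} (for $q\in[1,n]$) or \eqref{smi-1} (for $q\in(0,1)$). The remaining work is a short dictionary between the two functionals, one rescaling, and --- to recover the version stated in the Introduction --- a one-line identification of the two forms of the subspace mass inequality.

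First I would match functionals. For $Q\in\kne$, polar duality \eqref{polar-identity} gives $h_{Q^*}=1/\rho_Q$, hence $\int_{\sn}h_Q^{-q}\,du=\int_{\sn}\rho_{Q^*}^q\,du=n\omega_n\,\bar V_q(Q^*)^q$, so the functional $\Phi$ of Lemma~\ref{dmp-lem5} satisfies
\[
\Phi(Q)=\frac1{|\mu|}\int_{\sn}\log\rho_Q\,d\mu+\frac1q\log\int_{\sn}h_Q^{-q}\,du
=\Phi_\mu(Q^*)+\tfrac1q\log(n\omega_n).
\]
Since $Q\mapsto Q^*$ is a bijection of $\kne$, a maximizer $K_0\in\kne$ of $\Phi$ --- which exists by Lemma~\ref{dmp-lem5} because $\mu$ satisfies the $q$-th subspace mass inequality --- yields $M_0:=K_0^*\in\kne$ with $\Phi_\mu(M_0)=\sup_{M\in\kne}\Phi_\mu(M)$.

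Next I would normalize. As $\wt V_q$ is homogeneous of degree $q>0$ and $\wt V_q(M_0)\in(0,\infty)$, there is a unique $\lambda>0$ with $K:=\lambda M_0$ satisfying $\wt V_q(K)=|\mu|$; and since $\Phi_\mu$ is homogeneous of degree $0$ by \eqref{homog5}, $K$ remains a global maximizer of $\Phi_\mu$ over $\kne$, so in particular $\Phi_\mu(K)=\sup\{\Phi_\mu(Q):\wt V_q(Q)=|\mu|,\ Q\in\kne\}$. Because $q\in(0,n]$ avoids the exceptional value $q=0$, Lemma~\ref{lem-max2} applies verbatim and produces the desired origin-symmetric body $K$ with $\wt C_q(K,\cdot)=\mu$. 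Finally, when $q=k$ is an integer, writing $1/q'=(k-1)/k$ and reindexing \eqref{smi} by the dimension of the subspace shows it coincides with the $k$-subspace mass inequality $\mu(\sn\cap\xi_i)/|\mu|<1-\frac{k-1}{k}\frac{n-i}{n-1}$, which recovers the theorem announced in the Introduction.

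The only genuinely hard step lives inside the ingredients, not in this assembly: it is the non-degeneracy claim in the proof of Lemma~\ref{dmp-lem5}, i.e.\ that a maximizing sequence of origin-symmetric bodies cannot degenerate to a lower-dimensional set. That is where the subspace mass inequality is essential, and it is powered by the cross-polytope dual-quermassintegral estimate of Lemma~\ref{dmp-lem1} together with the entropy comparison of Lemma~\ref{dmp-lem4}. Everything above is bookkeeping: matching $\Phi$ with $\Phi_\mu$ through polarity and a homogeneity rescaling.
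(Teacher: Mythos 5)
Your proposal is correct and is exactly the paper's argument: the authors prove the theorem by the one-line observation that it "follows directly from Lemmas \ref{lem-max2} and \ref{dmp-lem5}," and your polarity dictionary $\Phi(Q)=\Phi_\mu(Q^*)+\frac1q\log(n\omega_n)$, the degree-$0$ homogeneity rescaling to achieve $\wt V_q(K)=|\mu|$, and the reindexing of the subspace mass inequality are precisely the bookkeeping that link implicitly supplies. No gaps.
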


The proof
follows directly from  Lemmas \ref{lem-max2} and  \ref{dmp-lem5}.

The theorem above shows that the subspace mass inequality is a sufficient condition
for the existence of the dual Minkowski problem. When $0<q\le 1$, the subspace mass inequality
means that the given even measure is not concentrated in any subspace of co-dimension 1.
This condition is obviously necessary.  When $q=n$, if the given even measure is not concentrated in
two complementary subspaces, it was proved in \cite{BLYZ13jams} that the subspace mass inequality
is also necessary. Progress regarding the intermediate cases $1<q<n$ would be most welcome.

\section*{Acknowledgement} The authors thank Yiming Zhao for his comments on various versions of this work.

\end{document}